\newcommand\C{\ensuremath{\mathbb C}}
\renewcommand\H{\ensuremath{\mathbb H}}
\newcommand\Q{\ensuremath{\mathbb Q}}
\newcommand\R{\ensuremath{\mathbb R}}
\newcommand\Z{\ensuremath{\mathbb Z}}
\let\eps=\varepsilon
\let\into=\hookrightarrow
\newcommand\rk{\operatorname{rk}}
\newcommand\ind{\operatorname{ind}}
\newcommand\im{\operatorname{im}}
\newcommand\del{\partial}
\newcommand\norm[1]{\left\|#1\right\|}
\newcommand\Hom{{\operatorname{Hom}}}
\newcommand\End{{\operatorname{End}}}
\newcommand\ch{\operatorname{ch}\nolimits}
\newcommand\ek{\mu}
\newcommand\arsinh{\operatorname{ar\mskip 2mu sinh}}
\newcommand\Bun{\operatorname{Bun}}
\newcommand\conn{\mathbin{\sharp}}
\newcommand\bconn{\mathbin{\natural}}
\newcommand\Top{{\text{\it Top\/}}}
\newcommand\Cat{{\text{\it Cat\/}}}
\newcommand{\SU}{\mathrm{SU}}
\newcommand{\Spin}{\mathrm{Spin}}
\newcommand{\spin}{\mathrm{spin}}
\newcommand{\U}{\mathrm{U}}
\newcommand{\KS}{\mathrm{KS}}
\numberwithin{equation}{section}
\theoremstyle{plain}
\newtheorem{Lemma}{Lemma}
\numberwithin{Lemma}{section}
\newtheorem{Proposition}[Lemma]{Proposition}
\newtheorem{Corollary}[Lemma]{Corollary}
\newtheorem{Theorem}[Lemma]{Theorem}
\theoremstyle{definition}
\newtheorem{Definition}[Lemma]{Definition}
\theoremstyle{remark}
\newtheorem{Remark}[Lemma]{Remark}
\newtheorem{Example}[Lemma]{Example}
\begin{document}

\title[Quaternionic Line Bundles]{Kreck-Stolz invariants for quaternionic line bundles} 

\author{Diarmuid Crowley and Sebastian Goette}

\subjclass[2000]{Primary: 58J28, 57R55, 57R20}

\keywords{$\eta$-invariant, quaternionic line bundle, $7$-manifold, smooth structure, Kirby-Siebenmann invariant}

\address{Hausdorff Research Institute for Mathematics\\
Universit\"at Bonn\\
Poppelsdorfer Allee 82\\
D-53115 Bonn \\
Germany} \email{diarmuidc23@gmail.com}

\address{Mathematisches Institut\\ Universit\"at Freiburg\\ Eckerstr.~1, 79104 Freiburg, Germany} \email{sebastian.goette@math.uni-freiburg.de}

\begin{abstract}
We generalise the Kreck-Stolz invariants $s_2$ and $s_3$ by defining a new invariant, the $t$-invariant, for quaternionic line bundles over closed spin-manifolds $M$ of dimension $4k-1$ with $H^3(M; \Q) = H^4(M; \Q) = 0$.  The $t$-invariant classifies closed smooth oriented $2$-connected rational homology $7$-spheres up to almost-diffeomorphism and detects exotic homeomorphisms between such manifolds.  

The $t$-invariant also provides information about quaternionic line bundles over a fixed manifold and we use it to give a new proof of a theorem of Feder and Gitler about the values of the second Chern classes of quaternionic line bundles over $\H P^k$.  The $t$-invariant for $S^{4k-1}$ is closely related to the Adams $e$-invariant on the $(4k-5)$-stem.
%
%
%
\end{abstract}

\maketitle

\section*{Introduction}

In~\cite{KS}, Kreck and Stolz introduced invariants~$s_1$, $s_2$,
$s_3\in\Q/\Z$ of certain closed smooth oriented simply connected~$7$-manifolds~$M$ that completely characterise~$M$  
up to diffeomorphism.  The $s$-invariants are defect invariants based on index theorems for Dirac operators on $8$-manifolds: the invariant $s_1$, which equals the Eells-Kuiper invariant~$\mu$ of~\cite{EK} if~$M$ is spin, is the defect of the untwisted Dirac operator, whereas~$s_2$ and~$s_3$ are the defects of the Dirac operator twisted by certain {\em complex line bundles}.

In this paper we define a defect invariant, the $t$-invariant, based on index theorems for Dirac operators twisted by {\em quaternionic line bundles}.  Suppose now that~$M$ is a closed smooth spin $(4k-1)$-manifold such that the groups~$H^3(M; \Q)$ and~$H^4(M; \Q)$ vanish and let~$\Bun(M)$
%
%
denote the set of isomorphism classes of quaternionic line bundles over~$M$.  The $t$-invariant is a function (see Definition~\ref{B2.D1}),
\[ t_M \colon \Bun(M) \longrightarrow \Q/\Z \; , \]
which as we explain in Section~\ref{C3}, is a precise generalisation of the
invariants~$s_2$ and~$s_3$ in dimension~$7$.
Another important invariant of quaternionic line bundles is their second Chern class which defines a function
%
\[ c_2 \colon \Bun(M) \longrightarrow H^4(M) \,.\]
We shall say that~$M$ and~$N$ have {\em isomorphic} $t$-invariants if there is a group isomorphism~$A \colon H^4(M) \to H^4(N)$ and a set bijection~$B \colon \Bun(M) \to \Bun(N)$ which are compatible with second Chern class and the $t$-invariant.


Our first application of the $t$-invariant is in the realm of $2$-connected $7$-manifolds and was inspired by recent discoveries of Grove, Verdiani and Ziller~\cite{GVZ}. The first named author established a complete classification of $2$-connected rational homology $7$-spheres in~\cite{Crow} using a certain {\em extrinsically defined} quadratic linking form~$q_M \colon H^4(M) \to \Q/\Z$ whose values are calculated using a spin $8$-manifold~$W$ with boundary~$\partial W = M$.  Theorem \ref{Z.T1} below, which is a reformulation of Theorem \ref{C1.T2}, states that the $t$-invariant $t_M$ is a refinement of~$q_M$.  Moreover, as a defect invariant, the $t$-invariant has an {\em intrinsic definition}, see \eqref{B2.4}, given via $\eta$-invariants arising from the Atiyah-Patodi-Singer index theorem of~\cite{APS}. 

\begin{Theorem} \label{Z.T1}
Let $M$ be a closed smooth $2$-connected oriented rational homology $7$-sphere.  Then 
\begin{enumerate}
\item \label{Z.T1.1} the map $c_2 \colon \Bun(M) \to H^4(M)$ is onto,
\item \label{Z.T1.2} for all $E \in \Bun(M)$, $q_M(c_2(E)) = 12 t_M(E)$,
\item \label{Z.T1.3} the map $c_2 \times t_M \colon \Bun(M) \to H^4(M) \times \Q/\Z$ is injective.
\end{enumerate}
\end{Theorem}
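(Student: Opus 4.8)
The plan is to handle the three assertions separately: assertion \eqref{Z.T1.1} by a direct geometric construction, and \eqref{Z.T1.2}, \eqref{Z.T1.3} by comparing the intrinsic $\eta$-invariant definition of $t_M$ (Definition~\ref{B2.D1}) with the Atiyah--Patodi--Singer index theorem on a spin coboundary. First I would record that two-connectedness together with the rational homology sphere hypothesis forces $\tilde H^i(M;\Z)=0$ for $i\in\{1,2,3,5,6\}$, while $H^4(M)\cong H_3(M)$ is finite and $H^7(M)\cong\Z$. For \eqref{Z.T1.1}, fix $x\in H^4(M)$ and let $\mathrm{PD}(x)\in H_3(M)$ be its Poincar\'e dual. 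Since $\Omega_3^{\SO}=0$, every degree-$3$ integral homology class is carried by a map from a closed oriented $3$-manifold, and because $2\cdot 3<7$ this map may be taken (Whitney) to be an embedding $Y^3\hookrightarrow M$. Its normal bundle $\nu$ is an oriented rank-$4$ bundle with $w_2(\nu)=0$, as $M$ is spin and $TY$ is stably trivial, hence $\nu$ is spin; the obstructions to trivialising it lie in $H^i(Y;\pi_{i-1}\Spin(4))$, which vanish for $i\le 3=\dim Y$ because $\pi_j\Spin(4)=0$ for $j\le 2$. Thus $\nu$ is trivial, and the Pontryagin--Thom collapse $M\to\mathrm{Th}(\nu)=\Sigma^4 Y_+\to S^4=\H P^1\hookrightarrow\H P^\infty$ classifies a quaternionic line bundle $E$ with $c_2(E)=x$, proving \eqref{Z.T1.1}.

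For \eqref{Z.T1.2} I would choose a spin coboundary $W^8$ with $\partial W=M$ and, where possible, an extension $\hat E$ of $E$ over $W$. Writing $\hat E_\C$ for the underlying complex plane bundle, $c_1(\hat E_\C)=0$ and $\ch(\hat E_\C)=2-c_2(\hat E)+\tfrac1{12}c_2(\hat E)^2+\cdots$; together with $\Adach_1(W)=-p_1(W)/24$ the index theorem expresses $t_M(E)$ via \eqref{B2.4} as
\[ t_M(E)\equiv\frac1{24}\int_W p_1(W)\,c_2(\hat E)+\frac1{12}\int_W c_2(\hat E)^2\pmod\Z, \]
the reduction mod $\Z$ being independent of all choices since gluing two coboundaries along $M$ yields a closed spin $8$-manifold on which the twisted index is an integer. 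Multiplying by $12$ and using that $\lambda:=\tfrac12 p_1(W)$ is integral for spin $W$ gives $12\,t_M(E)\equiv\int_W\bigl(c_2(\hat E)^2+\lambda\,c_2(\hat E)\bigr)$, which is exactly the coboundary formula for the quadratic linking function $q_M(c_2(E))$, the linear $\lambda$-term being the spin correction built into $q_M$. This proves \eqref{Z.T1.2}, and shows that the factor $12$ is precisely the denominator appearing in $\ch(\hat E_\C)$.

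Finally, for \eqref{Z.T1.3} it suffices, given \eqref{Z.T1.1}, to separate two bundles with a common value of $c_2$. If $c_2(E)=c_2(E')$ then the classifying maps agree on the $4$-skeleton, and since $H^5(M;\Z/2)=H^6(M;\Z/2)=0$ they may be homotoped to agree on the $6$-skeleton; their difference is then a single class $d\in H^7(M;\pi_7\H P^\infty)=H^7(M;\Z/12)\cong\Z/12$, using $\pi_7\H P^\infty=\pi_6(S^3)=\Z/12$. Realising this difference on an embedded disc $D^7\subset M$, I would invoke an additivity (gluing) formula for the $\eta$-defect to reduce to the sphere, $t_M(E')-t_M(E)=t_{S^7}(\alpha)$, where $\alpha\in\pi_7(\H P^\infty)$ corresponds to $d$. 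By the identification of $t_{S^7}$ with the Adams $e$-invariant on the $3$-stem (Section~\ref{C3}), the map $t_{S^7}\colon\Z/12\hookrightarrow\Q/\Z$ is injective, so $t_M(E)=t_M(E')$ forces $d=0$ and hence $E\cong E'$.

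I expect the main obstacle to lie in \eqref{Z.T1.3}: making the reduction to $S^7$ rigorous requires both the additivity of the $t$-invariant under the local modification by $\alpha$ and the exact computation of $t_{S^7}$, that is, the precise relation to the $e$-invariant. A secondary technical point pervading \eqref{Z.T1.2} and \eqref{Z.T1.3} is the possible non-extendability of $E$ over $W$; the obstruction again lives in $\Z/12=H^8(W,M;\pi_7\H P^\infty)$ and must be absorbed either by modifying $W$ or by arguing directly with the intrinsic $\eta$-invariant of \eqref{B2.4}.
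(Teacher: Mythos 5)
Your overall architecture matches the paper's (which proves this as Theorem~\ref{C1.T2}): surjectivity of $c_2$, a coboundary computation identifying $12\,t_M$ with $q_M$, and injectivity on $c_2$-fibres via the free action of $\Bun(S^7)\cong\Z/12$ together with injectivity of $t_{S^7}$. Your proof of \eqref{Z.T1.1} is a genuinely different and valid route: the paper gets surjectivity for free from Wilkens' handlebody coboundary $W\simeq\bigvee S^4$ (where $c_2\colon\Bun(W)\to H^4(W)$ is bijective and $H^4(W)\to H^4(M)$ is onto), whereas you realise the Poincar\'e dual by an embedded framed $3$-manifold and apply the Pontrjagin--Thom construction of Section~\ref{A3}; both work, and yours has the virtue of not needing the handlebody theorem.

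There are, however, two genuine errors. First, in \eqref{Z.T1.2} your index computation drops the normalising factor $\tfrac1{a_{k+1}}=\tfrac12$ from \eqref{B1.D1}/\eqref{B2.4}: the correct formula is \eqref{C1.2}, namely $t_M(E)=\tfrac1{48}\int_W p_1c_2+\tfrac1{24}\int_W c_2^2$, half of what you wrote, and its well-definedness mod $\Z$ needs the \emph{evenness} of the quaternionically twisted index \eqref{B1.1}, not mere integrality. You then compare $12\,t_M$ with $\int_W(c_2^2+\lambda c_2)$, but the definition \eqref{C1.1} of $q_M$ carries a factor $\tfrac12$, so this is $2q_M$. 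The two factor-of-two slips cancel to give the stated identity, but each intermediate step is false as written; you should also close the extendability gap you flag, e.g.\ by noting that on a handlebody coboundary at least one bundle in each $c_2$-fibre extends and that replacing $E$ by $E\conn F$ with $F\in\Bun(S^7)$ changes $t_M$ by a multiple of $\tfrac1{12}$, hence leaves $12\,t_M$ unchanged. Second, and more seriously, in \eqref{Z.T1.3} you justify injectivity of $t_{S^7}\colon\Z/12\to\Q/\Z$ by ``the identification of $t_{S^7}$ with the Adams $e$-invariant'': Theorem~\ref{B3.T1} requires $k\ge 3$, and Remark~\ref{B3.R3} shows explicitly that the identification \emph{fails} for $S^7$, where $t_{S^7}(E_c)=\tfrac{c(c-1)}{24}$ is quadratic rather than linear in the stabilised class. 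The conclusion you need is still true, but it must be obtained as in the paper: Proposition~\ref{S4.T1}/Example~\ref{D1.K2} exhibit a bundle with $t_{S^7}=\tfrac1{12}$, and additivity (Proposition~\ref{B3.P1}) makes $t_{S^7}$ a homomorphism, hence an isomorphism onto $\tfrac1{12}\Z/\Z$. Your obstruction-theoretic reduction of two bundles with equal $c_2$ to a difference class in $H^7(M;\pi_7 BS^3)\cong\Z/12$ acting by connected sum is fine and is essentially the paper's argument in different clothing.
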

\noindent 
Applying \cite[Theorem A]{Crow} we obtain the following expanded version of Corollary \ref{C1.C1}.

\begin{Corollary} \label{Z.C1}
Let~$N$ and~$M$ be closed smooth $2$-connected oriented rational homology $7$-spheres.  Then~$N$ is diffeomorphic to~$M$ if and only if~$t_N$ is isomorphic to~$t_M$ and the Eells-Kuiper invariants of~$M$ and~$N$ agree: $\mu(N) = \mu(M)$.
\end{Corollary}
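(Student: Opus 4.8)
My plan is to deduce the corollary formally from the classification \cite[Theorem A]{Crow} by using Theorem~\ref{Z.T1} to convert the $t$-invariant into the quadratic linking form~$q_M$. Recall that \cite[Theorem A]{Crow} classifies closed smooth $2$-connected oriented rational homology $7$-spheres up to orientation-preserving diffeomorphism by the pair consisting of the isomorphism class of the quadratic linking form $q_M \colon H^4(M) \to \Q/\Z$ and the Eells--Kuiper invariant $\mu(M)$, where two linking forms count as isomorphic when there is a group isomorphism $A \colon H^4(M) \to H^4(N)$ with $q_N \circ A = q_M$. It therefore suffices to prove that, for the manifolds at hand, $t_M$ and $t_N$ are isomorphic if and only if $q_M$ and $q_N$ are isomorphic by a single group isomorphism of the fourth cohomology groups.

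For the ``only if'' direction I would start from an orientation-preserving diffeomorphism $\phi \colon M \to N$. Because $M$ is $2$-connected its spin structure is unique, so $\phi$ respects spin structures and hence $\mu(N) = \mu(M)$. Pullback along $\phi$ supplies a group isomorphism $A \colon H^4(M) \to H^4(N)$ and a bijection $B \colon \Bun(M) \to \Bun(N)$; naturality of $c_2$ gives $c_2 \circ B = A \circ c_2$, while the intrinsic definition~\eqref{B2.4} of the $t$-invariant via $\eta$-invariants makes it a spin diffeomorphism invariant, so $t_N \circ B = t_M$. Thus $(A,B)$ witnesses that $t_N$ is isomorphic to $t_M$.

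For the ``if'' direction I would take a group isomorphism $A$ and a bijection $B$ witnessing that $t_N$ is isomorphic to $t_M$, together with the hypothesis $\mu(N) = \mu(M)$. Given $x \in H^4(M)$, Theorem~\ref{Z.T1}(\ref{Z.T1.1}) lets me pick $E \in \Bun(M)$ with $c_2(E) = x$, and then chaining Theorem~\ref{Z.T1}(\ref{Z.T1.2}) for $M$, the compatibility of $B$ with $c_2$ and $t$, and Theorem~\ref{Z.T1}(\ref{Z.T1.2}) for $N$ gives
\begin{align*}
  q_N(A(x)) &= q_N\bigl(c_2(B(E))\bigr) = 12\,t_N(B(E)) \\
  &= 12\,t_M(E) = q_M(c_2(E)) = q_M(x)\,.
\end{align*}
Hence $q_N \circ A = q_M$, so $A$ is an isomorphism of quadratic linking forms, and \cite[Theorem A]{Crow} then produces the desired orientation-preserving diffeomorphism $M \to N$.

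Once Theorem~\ref{Z.T1} is available the argument is essentially formal, so the only delicate points are bookkeeping ones. I expect the main thing to watch is that the factor $12$ in Theorem~\ref{Z.T1}(\ref{Z.T1.2}) loses no information: since $B$ preserves the $t$-invariant exactly in $\Q/\Z$ rather than merely modulo $\tfrac{1}{12}$, multiplication by $12$ preserves the equalities, so the passage through $q$ is lossless (and in particular $q_M(x) = 12\,t_M(E)$ is independent of the chosen lift $E$). The remaining care is in matching the orientation and spin conventions to those of \cite[Theorem A]{Crow}; the genuinely substantive step, the bridge from the extrinsic form $q_M$ to the intrinsically defined $t$-invariant, is precisely the content of Theorem~\ref{Z.T1} and is assumed here.
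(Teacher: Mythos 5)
Your proposal is correct and follows essentially the same route as the paper, which derives the corollary as an immediate consequence of Theorem~\ref{C1.T1}~(2) (i.e.\ \cite[Theorem A]{Crow}) together with Theorem~\ref{C1.T2}~(3), exactly the bridge $q_M(c_2(E)) = 12\,t_M(E)$ you use. Your additional bookkeeping (surjectivity of $c_2$ to realise every $x \in H^4(M)$, and the observation that multiplying an exact equality in $\Q/\Z$ by $12$ is harmless in the direction needed) correctly fills in the details the paper leaves implicit.
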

\noindent
In \cite{Goe} the second named author used Corollary \ref{Z.C1} to classify manifolds constructed in~\cite{GVZ} by directly calculating their $t$-invariants as defined in \eqref{B2.4},
see example~\ref{C1.E1}.

A further feature of the $t$-invariant in dimension~$7$ is that it detects {\em exotic homeomorphisms}: these are homeomorphisms~$h \colon N \cong M$ which are not homotopic to piecewise linear homeomorphisms.   The work of Kirby and Siebenmann~\cite{KiSi} implies that for a homeomorphism~$h \colon N \cong M$ there is an invariant 
\[ \KS(h) \in H^3(M; \Z/2) \;, \]
depending only on the homotopy class of~$h$, such that~$h$ is exotic if and only if~$\KS(h) \neq 0$.  The following shorter version of Theorem \ref{C2.T1} combined with Theorem \ref{Z.T1}~\eqref{Z.T1.1} above shows that  the Kirby-Siebenmann invariant of $h$ can be computed using the induced map $h^* \colon \Bun(M) \to \Bun(N)$ and the $t$-invariants of $N$ and $M$.

\begin{Theorem} \label{Z.T2}
A homeomorphism~$h: N \to M$ is exotic if and only if
\[t_M \neq t_N \circ h^*.\]
More precisely, for all~$E \in \Bun(M)$,  
\[ \bigl(\KS(h) \smile c_2(E) \bigr) [M]_2  =  t_N(h^*E) - t_M(E) \in \Z/2 \subset \Q/\Z \;, \]
where~$[M]_2$ generates~$H_7(M; \Z/2)$. 
%
\end{Theorem}

We now change our focus from the base-space manifolds to the bundles themselves.  For the simplest manifolds~$M = S^{4k-1}$ we have the $t$-invariant
\[ t_{S^{4k-1}} \colon \Bun(S^{4k-1}) \cong \pi_{4k-2}(S^3) \longrightarrow \Q/\Z \; . \]
Given its relationship to the Dirac operator and hence the $\widehat A$-genus one might expect that the $t$-invariant is related to the Adam's $e$-invariant~$e \colon \pi_{4k-5}^S \to \Q/\Z$ of~\cite{A}.  Theorem \ref{B3.T1}, restated immediately below, bears this expectation out.

\begin{Theorem}\label{Z.T3}
Assume that~$k\ge 3$.
For any homotopy sphere~$\Sigma^{4k-1}$ the homomorphism
\[ t_\Sigma \colon \Bun(\Sigma) \cong \pi_{4k-2}(S^3) \longrightarrow \Q/\Z\]
may be identified with the composition
\[ - e \circ S \colon \pi_{4k-2}(S^3) \longrightarrow \pi_{4k-5}^S \longrightarrow \Q/\Z\]
where~$- e \colon \pi_{4k-5}^S \to \Q/\Z$ and~$S \colon \pi_{4k-2} \to \pi_{4k-5}^S$ are respectively the the negative of the Adams $e$-invariant~\cite{A} and the stabilisation homomorphism.
\end{Theorem}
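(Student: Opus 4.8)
My plan is to exploit two structural features of the $t$-invariant: that on a homotopy sphere it is a homomorphism $\pi_{4k-2}(S^3)\to\Q/\Z$, and that, being a defect of the Atiyah--Patodi--Singer index theorem, it is a spin bordism invariant. The map $-e\circ S$ is visibly a homomorphism as well, so it suffices to show that the two agree as homomorphisms out of $\pi_{4k-2}(S^3)$. I would do this by routing both through a single object, the reduced spin bordism group $\wt\Omega_{4k-1}^{\Spin}(\H P^\infty)$ of the classifying space $\H P^\infty=B\Sp(1)$, and then matching the resulting characteristic numbers with Adams' $K$-theoretic computation of the $e$-invariant.

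First I would rewrite the intrinsic definition \eqref{B2.4} of $t_\Sigma(E)$ using the index theorem of \cite{APS}. Choosing a spin coboundary $W^{4k}$ of $\Sigma$ together with an extension $\mathcal E$ of a suitable multiple of $E$ over $W$, the reduced $\eta$-invariants in \eqref{B2.4} assemble via the index theorem into a relative characteristic number, $t_\Sigma(E)\equiv -\bigl\langle \Adach(W)\,(\ch(\mathcal E)-\rk E),[W,\Sigma]\bigr\rangle \pmod\Z$, in which the untwisted contribution (the Eells--Kuiper/$\Adach$-term) cancels the purely $W$-dependent part and leaves a quantity depending only on $(\Sigma,E)$. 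Independence of the choices of $W$ and $\mathcal E$ modulo $\Z$ is precisely the statement that $t_\Sigma$ descends to a homomorphism on $\wt\Omega_{4k-1}^{\Spin}(\H P^\infty)$, generated by the classifying maps $\Sigma\to\H P^\infty$.

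Next I would identify the resulting invariant with $-e(S\beta)$. Since $\Sigma$ is a homotopy sphere, its classifying map factors through $\H P^{k-1}$ and is determined by the clutching function $\beta\in\pi_{4k-2}(S^3)$, whose stabilisation is $S\beta\in\pi^S_{4k-5}$ (note $4k-5\equiv 3\pmod 4$, the stem on which $e_\C$ is interesting). The reduced $\eta$-invariant appearing in \eqref{B2.4} is exactly of the type that \cite{APS} identify with the Adams $e$-invariant: for a stable class presented through the bundle data, the spectral defect of the twisted Dirac operator equals $e_\C$ up to sign and normalisation, the passage from the rank-two complex bundle $E$ (with $c_1=0$, $\ch_2=-c_2$) to the Chern-character data of the mapping cone of $S\beta$ reorganising the $4k$-dimensional integral above into Adams' $(4k-4)$-dimensional datum. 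I would pin down the overall sign and scale by evaluating both homomorphisms on a generating set: on the image of the $J$-homomorphism the $e$-invariant is given by Adams' Bernoulli-number formula, and the same classes are visible to $t_\Sigma$ through the $\Adach$-computation on the coboundary, so matching these shows the two homomorphisms coincide on generators, hence everywhere. The hypothesis $k\ge 3$ enters to keep the relevant cells of $\H P^{k-1}$ and the stem $\pi^S_{4k-5}$ in the range where these formulae apply.

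The main obstacle is this last identification: correctly tracking the normalisation and sign so that the $4k$-dimensional, quaternionically twisted $\Adach\,\ch$-integral defining $t_\Sigma$ is matched to the Chern-character data defining Adams' $e_\C(S\beta)$. Equivalently, one must show that the spin bordism invariant $t_\Sigma$ and the stable-homotopy invariant $-e\circ S$ are carried to one another by the canonical map $\wt\Omega_{4k-1}^{\Spin}(\H P^\infty)\to\pi^S_{4k-5}$; once the two homomorphisms are shown to agree on the explicit $J$-homomorphism generators, where the Bernoulli-number computation is available on both sides, the theorem follows.
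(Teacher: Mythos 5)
Your proposal correctly identifies what has to be proved --- that $t_\Sigma$ factors through the stabilisation $S\colon\pi_{4k-2}(S^3)\to\pi^S_{4k-5}$ and is then computed by $-e$ --- but it does not actually prove it; the two steps you offer in its place both have gaps. First, verifying the identity ``on the image of the $J$-homomorphism'' is not a verification on a generating set of the source $\pi_{4k-2}(S^3)$: the image of $S$ is in general a proper subgroup of $\pi^S_{4k-5}$ (for $k=4$ it is the subgroup of order $14$ in $\pi^S_{11}\cong\Z/504$, while $\pi_{14}(S^3)\cong\Z/84\oplus\Z/2\oplus\Z/2$ has two summands killed by $S$), so one must separately show that $t_\Sigma$ vanishes on classes with trivial or $e$-invisible stabilisation --- and that is exactly the factorisation you are assuming rather than proving. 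Second, your only use of the hypothesis $k\ge3$ is the remark that it keeps things ``in the range where these formulae apply''; but the theorem is actually \emph{false} for $k=2$ (Remark~\ref{B3.R3}: $t_{S^7}(E_c)=c(c-1)/24$ while $-e(Y_c)=\mp c/24$), so any correct argument must invoke $k\ge3$ at a specific step, and yours does not.

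The paper's proof supplies precisely the missing mechanism. One first reduces to classifying maps $\xi\colon S^{4k-1}\to S^4=\H P^1\subset BS^3$ (using surjectivity of $\pi_{4k-1}(S^4)\to\pi_{4k-1}(BS^3)$, which comes from the Hopf fibration), so that the preimage $Y=\xi^{-1}(x_0)$ of a regular value is a \emph{framed} $(4k-5)$-manifold representing $S[\xi]$ under Pontrjagin--Thom. The quaternionic-divisor machinery (Propositions~\ref{B3.P2} and~\ref{B3.P3}, resting on the Bismut--Zhang direct-image formula for $\eta$-invariants) then localises the twisted $\eta$-invariant of $D_\Sigma^E$ to the untwisted $\eta$-invariant of $D_Y$, which Atiyah--Patodi--Singer identify with $e(Y)$. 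The residual correction term is $\int_M\widehat A\,\alpha_Y\,\ch'(E,\nabla^E)$ with $\alpha_Y$ of degree $3$ and $\ch'(E)-1$ of degree at least $4$, hence of total degree at most $7<4k-1$; this is where $k\ge3$ enters, and its failure for $k=2$ is exactly what produces the quadratic term $c(c-1)/24$. Your bordism-factorisation outline contains none of this, so as it stands the central identification is asserted rather than proved.
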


%
%

A key feature of quaternionic bundles is that both the quaternions~$\H$ and their group of units~$S^3 \subset \H$ are non-abelian.  As a result the classifying space~$BS^3 = \H P^\infty$ is not an H-space, and for a general space~$X$, $\Bun(X) \equiv [X, BS^3]$ does not have a naturally defined group structure.  This leads to the fact that it is often a difficult problem to determine the image of~$c_2$.  For the case $X = \H P^k$ and the map
\[ c_2 \colon \Bun(\H P^k) \longrightarrow H^4(\H P^k) \]
one says that an integer $c$ is $k$-realisable if $c \cdot c_2(H) \in {\rm Im}(c_2)$ where $H$ is the tautological bundle on $\H P^k$ and $c_2(H)$ is the universal second Chern class.  In~\cite{FG} Feder and Gitler proved

\begin{Theorem}[{\cite[Theorem 1.1]{FG}}] \label{Z.T4}
If an integer~$c$ is $k$-realisable then
  \begin{equation*}
    \frac{a_j}{(2j)!}\prod_{i=0}^{j-1}(c-i^2) = 0 \in\Q/\Z
	\qquad\text{for all }2\le j\le k\;.
  \end{equation*}
\end{Theorem}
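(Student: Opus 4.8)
The plan is to reinterpret each Feder--Gitler expression as the index of a Dirac operator on a closed even-dimensional quaternionic projective space twisted by a bundle built from a realising bundle. Since such an index is an integer, the expression vanishes in $\Q/\Z$. This is the closed, even-dimensional counterpart of the integrality underlying the $t$-invariant: on a $(4k-1)$-manifold one gets a defect modulo~$\Z$ via \cite{APS}, whereas on the closed manifold $\H P^j$ the Atiyah--Singer theorem forces the corresponding characteristic number to be an honest integer.

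The first reduction is to pass from $\H P^k$ to each $\H P^j$. If $c$ is $k$-realisable, choose $E \in \Bun(\H P^k)$ with $c_2(E) = c\,u$, where $u = c_2(H)$ generates $H^4(\H P^k;\Z)$ and $H^*(\H P^k;\Z) = \Z[u]/(u^{k+1})$. For each $j$ with $2 \le j \le k$, restrict to the standard $\H P^j \subseteq \H P^k$: the bundle $E_j := E|_{\H P^j}$ satisfies $c_2(E_j) = c\,u_j$ with $u_j$ the generator of $H^4(\H P^j;\Z)$, so realisability over $\H P^k$ yields realisability over every $\H P^j$. Since $H^2(\H P^j) = 0$, the manifold $\H P^j$ is spin and carries a Dirac operator $D$. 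Viewing $E_j$ as a rank-two complex $\SU(2) = \Sp(1)$ bundle with $c_1 = 0$, its Chern roots are $\pm x$ with $x^2 = -c_2(E_j) = -c\,u_j$. I would then twist $D$ by a virtual bundle $V_j$ assembled from the symmetric powers $S^m E_j$ (equivalently, from the irreducible $\SU(2)$-representations); because $E_j$ is a genuine bundle, $V_j$ is a genuine virtual bundle and Atiyah--Singer gives
\[ \ind(D \otimes V_j) = \int_{\H P^j} \Adach(T\H P^j)\,\ch(V_j) \in \Z. \]

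The computational heart is to evaluate this characteristic number in closed form. Here I would use the identity $\ch(S^m E_j) = \sinh\bigl((m+1)x\bigr)/\sinh x$, which exhibits each Chern character as an even power series in $x^2 = -c\,u_j$, together with the known Pontryagin class $p(T\H P^j) = (1+u_j)^{2j+2}(1+4u_j)^{-1}$ and the resulting $\Adach$-class. Integration over $\H P^j$ extracts the coefficient of $u_j^{\,j}$, so the left-hand side becomes a polynomial in $c$ of degree $j$. The claim is that for the correct choice of $V_j$ all contributions collapse to
\[ \int_{\H P^j} \Adach(T\H P^j)\,\ch(V_j) = \frac{a_j}{(2j)!}\prod_{i=0}^{j-1}(c - i^2), \]
where $a_j$ is identified with the normalising constant already attached to $\H P^j$ in the preceding sections. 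Combined with the integrality above, this is exactly the assertion of the theorem.

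The main obstacle is precisely this collapse. One must select the coefficients (and range) of the symmetric powers in $V_j$ so that the weight power sums $\sum_r (m-2r)^{2\ell}$, after multiplication by the $\Adach$-class factors $(1+u_j)^{2j+2}$ and $(1+4u_j)^{-1}$ and truncation by $u_j^{\,j+1} = 0$, recombine into the single falling product $c(c-1)(c-4)\cdots\bigl(c-(j-1)^2\bigr)$ with the exact constant $a_j/(2j)!$; the $i^2$ in the factors should emerge from the interplay of the $\sinh$-expansion with the $(1+4u_j)^{-1}$ term. Pinning down this combination, matching the constant $a_j$, and checking that the range $2 \le j \le k$ (rather than all~$j$) is the natural one are the delicate bookkeeping steps, whereas the index-theoretic input itself is immediate once a realising bundle $E$ is known to exist.
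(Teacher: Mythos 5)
Your overall strategy --- restrict a realising bundle to $\H P^j$, twist the Dirac operator on the closed spin manifold $\H P^j$ by something built from $E_j$, and read off the Feder--Gitler condition from integrality of the index --- is viable and is in essence the integrality that the paper also exploits. But as written the proposal is a plan rather than a proof: the entire mathematical content is the identity
\[
\bigl(\widehat A(T\H P^j)\,\ch(V_j)\bigr)[\H P^j]=\frac{a_j}{(2j)!}\prod_{i=0}^{j-1}(c-i^2)\;,
\]
and you neither specify $V_j$ nor prove the identity; you explicitly defer exactly the step where the falling product $\prod_{i}(c-i^2)$ and the constant $a_j/(2j)!$ are supposed to emerge. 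Until that is done there is no theorem. Two further points need care even once $V_j$ is chosen: the factor $a_j$ is not a ``normalising constant attached to $\H P^j$'' but comes from the evenness of the index when $\dim=4j\equiv 0 \bmod 8$, i.e.\ from the quaternionic structure on $\Sigma X\otimes_\C E$ as in \eqref{B1.1}; and one must check that the rational number produced by the index formula is \emph{exactly} the Feder--Gitler expression, not merely congruent to it modulo $\Z$, since otherwise the integrality of the index proves a different (possibly weaker) congruence.

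The paper closes this gap as follows, and more economically than your outline suggests. No symmetric powers are needed: the relevant $K$-theory class is simply $\C^2-E$, since $\bar c_2(\overline E)\,\ch'(\overline E)=2-\ch(\overline E)$ by \eqref{B1.2}. Theorem~\ref{D1.T1} evaluates $-\frac1{a_{j+1}}\bigl(\widehat A\,(2-\ch(\overline E_c))\bigr)[W_j,S^{4j-1}]$ over the disc bundle $W_j\to\H P^{j-1}$ (which, when $E_c$ extends over $\H P^j$, agrees by excision with the closed characteristic number you want) by a residue computation: after substituting $z=2\sinh(\sqrt a/2)$ the integrand becomes $\bigl(\cosh(2\sqrt c\,\arsinh\frac z2)-1\bigr)/z^{2j+1}$, and the identity $\cosh(2nx)=P_n(\sinh x)$ with $p_{n,\ell}=0$ for $n<\ell$ forces the answer, as a polynomial in $c$, to vanish at $c=0,1,4,\dots,(j-1)^2$ and to have leading normalisation $\tfrac12$ at $c=j^2$ --- this is where the product $\prod_i(c-i^2)$ and the factor $\tfrac{1}{(2j)!}$ actually come from. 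The deduction of the theorem is then one line: the restriction of $E_c$ to $S^{4j-1}\subset\H P^j$ bounds over the top cell, hence is trivial, hence has vanishing $t$-invariant, which by Theorem~\ref{D1.T1} is the stated congruence. Your proposal would become a proof only after supplying an argument of comparable precision for your chosen $V_j$.
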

\noindent
Using the $t$-invariant, we give a new proof of the Feder-Gitler criteria of Theorem~\ref{Z.T4} based on Theorem~\ref{D1.T1}.

The rest of the paper is organised as follows: in Section~\ref{B} we define the $t$-invariant from both the intrinsic analytic point of view and the extrinsic topological point of view.  This section also introduces the basic concepts and tools required for our analysis of quaternionic line bundles. 
In section~\ref{C} we consider the $t$-invariant for $7$-manifolds.  We up-date the classification results of~\cite{Crow} in Theorem~\ref{C1.T2}, we show that the $t$-invariant detects exotic homeomorphisms in Theorem~\ref{C2.T1}. In Section~\ref{C3} we show how the $t$-invariant on simply connected $7$-manifolds relates to the $s$-invariants and their generalisations by Hepworth in~\cite{Hep}.  Section~\ref{D} gives our proof of the Feder-Gitler criteria as well as several explicit computations of the $t$-invariant for certain bundles over~$S^7$, $S^{11}$ and~$S^{15}$.

{\bf Acknowledgements:}  It is a pleasure to thank Wolfgang Ziller for raising questions which stimulated this paper and Nitu Kitchloo for his interest in the problem.  We would also like to thank Matthias Kreck for helpful comments and Ian Hambleton for a tea-time discussion in which Theorem~\ref{B3.T1} was formulated.

\section{Secondary Invariants}\label{B}
In this section we define the $t$-invariant which is an invariant of pairs~$(M, E)$ where~$M$ is a closed spin $(4k-1)$-manifold and~$E$ is a quaternionic line bundle over~$M$.  We will give both an extrinsic and an intrinsic definition
of the $t$-invariant.  Whereas the extrinsic definition using zero bordisms
is easier to handle in many cases,
the intrinsic definition using $\eta$-invariants
is finer for manifolds that are only rationally zero bordant and may sometimes be computed when $0$-bordisms cannot be found.  We will state and prove properties of the $t$-invariant in both settings when we are able to because the extrinsic proofs are easier in most cases.  

The section is organised as follows.  Section~\ref{A1} briefly reviews quaternionic line bundles, Section~\ref{B1} defines the $t$-invariant and identifies some of its basic properties.  Section~\ref{A3} introduces the notion of a {\em quaternionic divisor} used in the proof of Theorem~\ref{B3.T1} which occupies most of Section~\ref{B3}.  Finally in Section~\ref{B4} we briefly consider dimension~$3$ and relate an invariant of Deloup and Massuyeau~\cite{DM} to the Atiyah-Patodi-Singer $\xi$-invariant which is precisely analogous to the $t$-invariant.

\subsection{Quaternionic Line Bundles}\label{A1}

Let~$\H, \H^\times$ and~$S^3$ denote respectively the quaternions, the non-zero quaternions and the unit quaternions.  A quaternionic line bundle, $E \to X$ or simply~$E$,  over a space~$X$ is a complex rank~$2$ vector bundle~$V \to X$ together with a reduction of its structure group to~$\H^\times$.
An isomorphism of quaternionic line bundles is a vector bundle isomorphism respecting the quaternionic structures.
As an example, consider the tautological bundle
\[ H \longrightarrow \H P^\infty = BS^3\;.\]

Since the structure group~$\H^\times$ of a quaternionic line bundle can always be reduced to~$S^3$,
and this reduction is unique up to contractible choice, quaternionic line bundles are classified by
the 
homotopy classes of maps to~$BS^3$.
So
\[ \Bun(X) = [X,BS^3] \] 
describes the set of isomorphism classes of quaternionic line bundles over a space~$X$. 
By abuse of notation, we will sometimes write~$E\in\Bun(X)$ to say that~$E$
is a quaternionic line bundle over~$X$.

%
%

The integral cohomology of~$BS^3$ is generated by the universal
second Chern class~$c_2(H)\in H^4(\H P^\infty)$, and so to each bundle~$E\to X$ with classifying map~$f \colon X\to BS^3$ we may associate the characteristic class
$$c_2(E) :=f^*c_2(H) \in H^4(X) \; .$$
In this way we obtain a map
\[ c_2 \colon \Bun(X) \longrightarrow H^4(X)\;,\qquad E \longmapsto c_2(E) \; .\]

Note that the theory of quaternionic bundles is not quite as straight forward as in the complex case.  First, the tensor product of two quaternionic line bundles is merely a four-dimensional real vector bundle.  In fact, every four-dimensional vector bundle that is orientable and spin
arises this way.  In particular, there is no canonical group structure on quaternionic line bundles.

Next, quaternionic line bundles are not classified by their second Chern class.
The reason is that~$BS^3$ is not an Eilenberg-MacLane space.
For example, the quaternionic line bundles on~$S^7$ are classified by
	$$\pi_7(BS^3)\cong\pi_6(S^3)\cong\Z/12\Z\;,$$
but of course~$H^4(S^7)=0$.
On the other hand, for a general space~$X$, not all the elements in~$H^4(X)$ are realised
as~$c_2(E)$ for a quaternionic line bundle~$E\to X$: for example, as is well known and explained in Section~\ref{D}, if~$X = \H P^2$, then the image of~$c_2$ in the group $H^4(\H P^2) \cong \Z$ is precisely all those classes~$c \cdot c_2(H)$ such that~$c(c-1)/2$ is congruent to~$0$ mod~$12$.  

\subsection{Modified Kreck-Stolz Invariants}\label{B1}
Let~$M$ be a closed spin manifold of dimension~$4k-1$ and let~$E \to M$ be a complex vector bundle over~$M$.  As observed by Kreck and Stolz, a source of $\Q/\Z$-valued invariants of the pair~$(M, E)$ is the following index theorem.

\begin{Theorem}[Atiyah-Singer~{\cite[III, Theorem~5.3]{AS}}]\label{B1.T1}
Let~$X^{4k}$ be a closed, smooth spin manifold and let~$E$ be a complex vector bundle over~$X$.  Then the index of~$D_X^{E}$, the Dirac operator of~$X$ twisted by~$E$, can be computed by the following equation:
\begin{equation} \label{B1.Eq1}
\ind\bigl( D_X^{E} \bigr) = \bigl( \widehat A(TX) \ch(E) \bigr) [X] \quad \in \quad \Z \;,
\end{equation}
where~$\widehat A(TX)$ and~${\rm ch}(E)  \in H^*(X)$ denote respectively the~$\widehat A$-genus of~$TX$ and the Chern character of~$E$ and~$[X] \in H_{4k}(X)$ is the fundamental class of~$X$.
\end{Theorem}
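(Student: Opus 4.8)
The plan is to prove this by the heat-kernel method, which sits naturally alongside the analytic ($\eta$-invariant) viewpoint of the paper. Write $D = D_X^E$ for the twisted Dirac operator acting on sections of $S \otimes E$, where $S = S^+ \oplus S^-$ is the $\Z/2$-graded spinor bundle of~$X$, and let $D^\pm$ denote its chiral components, so that $\ind(D_X^E) = \dim\ker D^+ - \dim\ker D^-$. Since $X$ is closed, $D^2$ is a nonnegative self-adjoint elliptic operator with discrete spectrum, and $D$ maps the nonzero eigenspaces of $D^2$ in even degree isomorphically onto those in odd degree. Pairing these off gives the McKean--Singer identity
\begin{equation*}
\ind(D_X^E) = \strace\bigl(e^{-tD^2}\bigr) = \int_X \strace\bigl(e^{-tD^2}(x,x)\bigr)\,dx\;,
\end{equation*}
valid for every $t>0$ and independent of~$t$, where $\strace$ is the supertrace for the grading on~$S$. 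This reduces the integrality statement to an analytic computation, and the right-hand side is already an integer, so integrality comes for free from the left.

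The second step is to extract the small-time asymptotics of the pointwise supertrace. Using the Lichnerowicz formula $D^2 = \nabla^*\nabla + \tfrac{s}{4} + c(F^E)$, where $s$ is the scalar curvature, $F^E$ the twisting curvature and $c(\punkt)$ Clifford multiplication, a standard parametrix construction yields an asymptotic expansion $e^{-tD^2}(x,x) \sim \sum_{j\ge 0} t^{\,j-2k}\,a_j(x)$ as $t\to 0^+$. A priori the supertrace of the leading coefficients could diverge as $t\to 0$, but the algebraic structure of the supertrace over the Clifford bundle forces cancellation, and only the contribution paired against the top Clifford element survives in the limit.

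The heart of the argument, and the step I expect to be the main obstacle, is the local index computation identifying $\lim_{t\to 0}\strace\bigl(e^{-tD^2}(x,x)\bigr)$ with the degree-$4k$ component of $\Adach(TX)\,\ch(E)$. I would carry this out by Getzler's rescaling: fix normal coordinates and a synchronous frame near a point, rescale the Clifford generators and the time variable so that the rescaled operator converges to a generalised harmonic oscillator, with the Riemann curvature two-form playing the role of a magnetic field, and then apply Mehler's formula. The resulting Gaussian integral evaluates to the Chern--Weil form
\begin{equation*}
\det{}^{1/2}\!\Bigl(\frac{R/2}{\sinh(R/2)}\Bigr)\,\tr\bigl(e^{-F^E}\bigr)\;,
\end{equation*}
whose top-degree part is exactly $\Adach(TX)\,\ch(E)$; the twisting bundle contributes the factor $\ch(E)=\tr(e^{-F^E})$ precisely because $E$ enters $D^2$ only through the zeroth-order curvature term $c(F^E)$ and not through the quadratic oscillator part. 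Integrating over~$X$ and substituting into the McKean--Singer identity of the first step then gives $\ind(D_X^E) = \bigl(\Adach(TX)\,\ch(E)\bigr)[X]$, as asserted. The delicate bookkeeping of the Clifford filtration under rescaling, and the verification that the rescaled heat kernel converges to the Mehler kernel, are where the real work lies.
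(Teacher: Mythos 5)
This statement is the Atiyah--Singer index theorem for twisted Dirac operators, which the paper does not prove at all: it is quoted verbatim with a citation to Atiyah--Singer and then used as a black box to set up the defect invariants. So there is no in-paper argument to match yours against, and the relevant question is only whether your proposed proof is sound. Your outline is the standard modern heat-equation proof (McKean--Singer identity, short-time asymptotics of the heat kernel, Getzler rescaling, Mehler's formula), which is a genuinely different route from Atiyah and Singer's original cobordism/K-theoretic arguments; it buys a local statement (the pointwise supertrace converges to the Chern--Weil representative of $\widehat A(TX)\ch(E)$), which is in fact the form of the theorem the paper implicitly relies on when it passes to the Atiyah--Patodi--Singer theorem on manifolds with boundary and to $\eta$-invariant correction terms. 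The individual steps you cite are all correct: the McKean--Singer identity and its $t$-independence, the Lichnerowicz formula with the twisting curvature entering only as a zeroth-order term $c(F^E)$ (which is exactly why the twist contributes the factor $\ch(E)$ and nothing more), and the identification of the rescaled limit with the harmonic-oscillator kernel.

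That said, as a proof it is a roadmap rather than an argument: you explicitly defer the two points where all the difficulty lives, namely the control of the Clifford filtration under Getzler's rescaling (which is what forces the cancellation of the divergent terms $t^{j-2k}$ for $j<2k$ in the supertrace) and the uniform convergence of the rescaled heat kernel to the Mehler kernel. Without those, the claim that ``only the contribution paired against the top Clifford element survives'' is an assertion, not a deduction. Since the paper itself treats the theorem as a citation, this is an acceptable level of detail for the present context, but be aware that what you have written is the table of contents of a proof (essentially Chapter 4 of Berline--Getzler--Vergne), not the proof itself.
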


Assume that~$(M, E)$ as above bounds a spin manifold and complex vector bundle~$(W, \overline E)$\footnote{Note that throughout $\overline E$ always denotes a bundle over a manifold $W$ with boundary such that $E$ is the restriction of $\overline E$ to the boundary $\del W$.  In particular $\overline E$ {\em is not} the complex conjugate of $E$.} and that the characteristic class~$\widehat A(TX) \ch(\overline E)$ has a natural interpretation in~$H^{4k}(W,M;\Q)$. Then evaluation on the relative fundamental cycle~$[W,M]$
gives a rational number which, by Theorem~\ref{B1.T1} is an invariant of~$(M, E)$ mod~$\Z$. The Atiyah-Patodi-Singer index theorem~\ref{B2.T1} below can also be used to identify this invariant as an intrinsic invariant of~$(M, E)$.  In the remainder of this section, we carry out this program for quaternionic line bundles.


Recall that a {\em real structure\/}~$s$ ({\em quaternionic structure\/}~$j$)
on a complex vector bundle~$E\to X$
is an anti-linear automorphism with~$s^2=1$ ($j^2=-1$).
A quaternionic vector bundle~$E\to X$ can be regarded as a complex vector
bundle with complex structure~$i$ and quaternionic structure~$j$.
We can always choose a quaternionic-Hermitian metric on~$E$
and a connection such that the quaternionic structure is parallel.

If $X^{4k}$ is a spin manifold and $k$ is even then the complex spinor bundle~$\Sigma X$ of~$X$
carries a real structure~$s$
that is parallel with respect to the Levi-Civita connection.
Then~$s\otimes j$ is a quaternionic structure on~$\Sigma X\otimes_\C E$
that commutes with the Dirac operator.
This implies that~$\H$ acts on all eigenspaces of~$D_X^E$,
in particular
\begin{equation}\label{B1.1}
  \ind\bigl(D_X^E\bigr)\in 2\Z\qquad\text{if~$\dim X\equiv 0$ mod~$8$.}
\end{equation}

Because~$c_2$ generates the ring of characteristic classes of quaternionic line
bundles,
there exists a universal characteristic class~$\ch'$ of quaternionic line bundles
such that
\begin{equation}\label{B1.2}
	2-\ch(E)=c_2(E)\,\ch'(E)\;. 
\end{equation}
In fact, one computes that
\begin{equation} \label{B1.2a}
\ch'(E) = 1 - \frac{1}{12}c_2(E) + \dots \;\;.
\end{equation}

Now suppose that~$M$ bounds a compact spin manifold~$W$ such that~$E$ is the restriction of a bundle~$\overline E\to W$.
In other words,
\begin{equation}\label{B1.3}
  [M,E]=0\quad\in\quad\Omega^{\Spin}_*(BS^3)\;.
\end{equation}
Assume moreover that~$H^3(M;\Q)=H^4(M;\Q)=0$.  Then because of the exact sequence
	$$\begin{CD}H^3(M;\Q)@>>>H^4(W,M;\Q)@>>>H^4(W;\Q)
	@>>>H^4(M;\Q)\;,\end{CD}$$
the class~$c_2(\overline E)\in H^4(W;\Z)$
has a unique lift~$\bar c_2(\overline E)\in H^4(W,M;\Q)$.

As a preliminary definition of the $t$-invariant,
let us consider the quantity
\begin{equation}\label{B1.D1} 
  \tau_M(E):=
  \frac{-1}{a_{k+1}}\bigl(\widehat A(TW)\,\ch'(\overline E)\,\bar c_2(\overline E)\bigr)
       [W, M]
       \quad\in\quad\Q/\Z\;,
\end{equation}
where~$a_j$ is~$1$ is~$j$ even and~$2$ if~$j$ is odd.
Theorem~\ref{B1.T1} and property~\eqref{B1.1} above ensure that~$\tau_M(E)$
is independent 
of the choice of the pair~$(W,\overline E)$.

We now want to define~$\tau_M(E)$ intrinsically on~$M$,
so that we can drop condition~\eqref{B1.3}.
Also, given a particular~$M$
and a particular quaternionic line bundle~$E\to M$,
we can sometimes use the numeric value of~$t(E)$
to derive information about~$M$ and~$E$.
Hence, we want to compute~$t(E)$ before we know how to construct~$W$
and how to extend~$E$ to~$W$.

We equip~$TM$ with a Riemannian metric~$g$.
It gives rise to a Levi-Civita connection~$\nabla^{TM}$ on~$TM$.
Let~$D$ be the untwisted spin Dirac operator of~$(M,g)$
acting on the complex spinor bundle~$\Sigma M\to M$.
We also equip the quaternionic line bundle~$E$ with a quaternionic Hermitean
metric~$h^E$ and a compatible connection~$\nabla^E$.
As above, $D_M^E$ denotes the spin Dirac operator twisted by~$(E,\nabla^E,h^E)$.
Let~$\eta(D_M)$ and~$\eta(D_M^E)$ denote the Atiyah-Patodi-Singer
$\eta$-invariants and put~$h(D_M)=\dim\ker D_M$ and~$h(D_M^E)=\dim\ker D_M^E$.

Let~$(\Omega^\bullet(M),d)$ denote the de Rham complex of~$M$.
We consider the characteristic Chern-Weil forms
	$$\widehat A\bigl(TM,\nabla^{TM}\bigr)\;,\quad
	c_2(E,\nabla^E)\;,\quad\text{and}\quad
	\ch(E,\nabla^E)\quad\in\quad\ker d\subset\Omega^\bullet(M)$$
that represent the corresponding characteristic classes in de Rham cohomology.
As in~\eqref{B1.2},
there is a characteristic Chern-Weil form~$\ch'(E,\nabla^E)$ such that
\begin{equation}\label{B2.2} 
  2-\ch(E,\nabla^E)=c_2(E,\nabla^E)\,\ch'(E,\nabla^E)\;.
\end{equation}

If~$H^3(M;\Q)=H^4(M;\Q)=0$, the de Rham cohomology of~$M$ vanishes in
these degrees.
Hence we find~$\hat c_2(E,\nabla^E)\in\Omega^3(M)$ such that
\begin{equation}\label{B2.3}
  d\hat c_2(E,\nabla^E)=c_2(E,\nabla^E)\;,
\end{equation}
and~$\hat c_2(E,\nabla^E)$ is uniquely determined modulo exact forms.

From the variation formulas for $\eta$-invariants and characteristic forms,
it is easy to see that the expression
\begin{multline}\label{B2.4} 
  t_M(E) :=\frac1{a_{k+1}}\,\biggl(\frac{\eta+h}2(D_M^E)
	-(\eta+h)(D_M)\\
	+\int_M\widehat A\bigl(TM,\nabla^{TM}\bigr)
		\,(\hat c_2\,\ch')\bigl(E,\nabla^E\bigr)\biggr)
  \quad\in\quad\R/\Z
\end{multline}
does not depend on the choice of geometric data.
Here we use that~$\frac{\eta+h}2(D_M^E)$ jumps by even integers
as the geometry varies
if~$\Sigma M\otimes_\C E$ carries a quaternionic structure,
see~\eqref{B1.1} above.

In other words, $t_M(E)$ depends neither on the metric~$g$
on~$M$ nor on the connection~$\nabla^E$ with parallel metric~$h^E$ on~$E$.
Alternatively,
regard two sets of data on~$M$ and~$E$.
Because~$\del(M\times[0,1])=M\sqcup(-M)$,
we can extend these data to~$W=M\times[0,1]$ and use Theorem~\ref{B2.T1}
to see that~$t_M(E)$ is indeed well-defined.
The computation is similar as in the proof of Proposition~\ref{B2.P1} below.

Now assume that~$(M,g)$ bounds a Riemannian spin manifold~$(W,\bar g)$
and that~$(E,\nabla^E,h^E)$ extends to~$(\overline E,\nabla^{\overline E},h^{\overline E})\to W$.
Then the pair
\begin{equation}\label{B2.4a}
  \bar c_2\bigl(\overline E,\nabla^{\overline E}\bigr)
  =\bigl(c_2\bigl(\overline E,\nabla^{\overline E}\bigr),
	\hat c_2\bigl(E,\nabla^E\bigr)\bigr)
  \quad\in\quad\Omega^4(W)\oplus\Omega^3(M)
\end{equation}
is closed in the mapping cone
of the pullback~$\Omega^\bullet(W)\to\Omega^\bullet(M)$.
It represents a lift~$\bar c_2(\overline E)$ of~$c_2(\overline E)$ to~$H^4(W,M;\R)$.
By the variation formulas for Chern-Simons classes,
$\bar c_2(\overline E)$ does not depend on~$\nabla^E$ and~$h^E$.
Note that closed forms on~$W$ act on pairs as above by multiplication
in both factors,
and that evaluation on the fundamental cycle~$[W,M]$ is given by
\begin{equation}\label{B2.4b}
  (\alpha,\beta)[W,M]=\int_W\alpha-\int_M\beta\;.
\end{equation}

Let us assume that~$(W,\bar g)$ and~$(\overline E,\nabla^{\overline E},h^{\overline E})$
are of product type near the boundary.
In this case,
the Atiyah-Patodi-Singer boundary conditions
for the spin Dirac operator~$D^{\overline E}_W$ twisted
by~$(\overline E,\nabla^{\overline E},h^{\overline E})$ are defined,
and we can use the following generalisation of Theorem~\ref{B1.T1}.

\begin{Theorem}[Atiyah-Patodi-Singer~{\cite[I, (4.3)]{APS}}]\label{B2.T1}
  Let~$(M,g)=\del(W,\bar g)$ as above
  and let~$(\overline E,\nabla^{\overline E},h^{\overline E})$ denote a complex vector bundle
  with Hermitian connection and parallel metric over~$W$.
  Then
	$$\ind\bigl(D^{\overline E}_W\bigr)
	=\int_W\widehat A\bigl(W,\nabla^W\bigr)
		\,\ch\bigl(\overline E,\nabla^{\overline E}\bigr)
	-\frac{\eta+h}2\bigl(D_M^E\bigr)
		\quad \in\quad\Z\;.$$
\end{Theorem}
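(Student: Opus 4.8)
The plan is to prove this by the heat-kernel method. Because $(W,\bar g)$ and $(\overline E, \nabla^{\overline E}, h^{\overline E})$ are of product type near $M$, the twisted Dirac operator takes the collar form $D^{\overline E}_W = \sigma(\partial_u + D_M^E)$, where $u$ is the inward normal coordinate, $\sigma$ is Clifford multiplication by $du$, and $D_M^E$ is the self-adjoint tangential operator on $M$. The first step is to impose the Atiyah-Patodi-Singer spectral boundary condition $P_{\ge 0}(s|_M)=0$, where $P_{\ge 0}$ is the spectral projection of $D_M^E$ onto its non-negative eigenvalues; this makes $D^{\overline E}_W$ Fredholm, so that $\ind(D^{\overline E}_W)$ is defined.

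Next I would compute the index through the McKean-Singer supertrace. Writing $D^{\overline E}_W$ in its even/odd components $D^+$ and $D^-=(D^+)^*$ with the corresponding adjoint boundary conditions, the graded heat supertrace
\[
  \strace\bigl(e^{-t(D^{\overline E}_W)^2}\bigr)
  = \trace\bigl(e^{-t D^- D^+}\bigr) - \trace\bigl(e^{-t D^+ D^-}\bigr)
\]
is independent of $t$ and equals $\ind(D^{\overline E}_W)$; the whole content of the theorem is then the evaluation of its $t \to 0$ limit. Away from the boundary this limit localises, and Getzler rescaling (the local index theorem) identifies the pointwise heat supertrace with the Chern-Weil integrand $\widehat A(W, \nabla^W)\,\ch(\overline E, \nabla^{\overline E})$, producing the interior term $\int_W \widehat A\,\ch$.

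The boundary contribution is where the real work lies. Modelling the collar by the half-cylinder $M \times (-\infty, 0]$ with operator $\sigma(\partial_u + D_M^E)$, I would compute the model heat kernel explicitly from the spectral decomposition of $D_M^E$ and show that the defect between the cylinder's supertrace and its naive interior value is controlled by the spectral asymmetry of $D_M^E$. An integration in $t$ converts this asymmetry into the analytic continuation defining $\eta(D_M^E)$, while the zero modes of $D_M^E$ must be treated separately: they contribute $h(D_M^E) = \dim \ker D_M^E$, and combining with the asymmetry term yields exactly $-\frac{\eta+h}{2}(D_M^E)$.

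The main obstacle is precisely this boundary analysis. The half-cylinder is non-compact and the tangential operator $D_M^E$ may have spectrum accumulating at $0$; equivalently, on the elongated manifold $\widehat W = W \cup_M (M \times (-\infty,0])$ the presence of $\ker D_M^E$ obstructs a naive trace-class argument and governs the delicate large-$u$ (equivalently large-$t$) behaviour. Sorting out the correct bookkeeping of $\ker D_M^E$—so that it enters the final formula as $h/2$ rather than being lost or double-counted—is the technical heart of the proof, and is what forces the non-local spectral boundary condition in the first place.
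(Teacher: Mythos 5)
This statement is not proved in the paper at all: it is quoted verbatim from Atiyah--Patodi--Singer \cite{APS} as a known input, so there is no internal proof to compare against. Your outline is, in substance, the standard heat-equation proof of the APS index theorem, and as a roadmap it is correct: spectral boundary condition $P_{\ge 0}(s|_M)=0$ to make the problem Fredholm, McKean--Singer supertrace, local index theorem in the interior giving $\int_W\widehat A\,\ch$, and an explicit analysis of the model operator $\sigma(\partial_u+D_M^E)$ on the half-cylinder producing $-\tfrac{\eta+h}{2}(D_M^E)$. The one substitution relative to the original argument is that you invoke Getzler rescaling for the interior density, whereas APS relied on the then-available form of the local index theorem; this is a legitimate modernisation and changes nothing structurally.

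That said, what you have written is a sketch, and the parts you defer are exactly where the theorem lives. Three points in particular are asserted rather than established. First, the convergence of the $t$-integral that produces $\eta(D_M^E)$ requires knowing that the eta function $\eta(s)=\sum_{\lambda\ne 0}\sign(\lambda)|\lambda|^{-s}$ admits a meromorphic continuation which is finite at $s=0$; this is itself a nontrivial theorem and cannot be absorbed into ``an integration in $t$ converts this asymmetry into $\eta$.'' Second, gluing the interior parametrix to the cylinder parametrix and showing the error terms contribute nothing to the $t\to 0$ limit (and, at the other end, controlling the large-$t$ behaviour on the elongated manifold, where the continuous spectrum starting at $0$ when $\ker D_M^E\ne 0$ forces the distinction between $L^2$ and extended solutions) is the technical core; you correctly identify it as the obstacle but do not resolve it. Third, the clean split of the cylinder contribution into $-\eta/2$ and $-h/2$ depends on a careful treatment of the zero modes under the boundary condition $P_{\ge 0}$ versus its adjoint $P_{<0}$, which is where sign and half-integer errors typically enter. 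None of this makes the outline wrong --- it is the right proof --- but as written it would not substitute for the citation to \cite{APS} that the paper actually uses.
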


Again,
if the Dirac operator commutes with a quaternionic structure
as in~\eqref{B1.1}, then the index above is even.

\begin{Proposition}\label{B2.P1}
  Let~$(E,\nabla^E,h^E)$ be a quaternionic line bundle with quaternionic
  connection and parallel quaternionic-Hermitian metric over~$(M,g)$.
  \begin{enumerate}
  \item\label{B2.P1.1} Assume that~$(M,E)$ bounds~$(W,\overline E)$ as above.
    Then
	$$\tau_M(E)=t_M(E)\;.$$
  \item\label{B2.P1.2} The invariant~$t_M(E)$ is always rational.
  \end{enumerate}
\end{Proposition}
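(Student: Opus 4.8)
The plan is to prove part~\eqref{B2.P1.1} by applying the Atiyah--Patodi--Singer index theorem (Theorem~\ref{B2.T1}) twice and matching the outcome against the bordism formula~\eqref{B1.D1}, and then to deduce part~\eqref{B2.P1.2} from part~\eqref{B2.P1.1} together with the rational triviality of spin bordism of $BS^3$ in odd degrees. For part~\eqref{B2.P1.1}, assume $(M,E)=\del(W,\overline E)$ with product structures near the boundary, so the boundary conditions for $D_W^{\overline E}$ are defined. First I would apply Theorem~\ref{B2.T1} to the trivial complex rank-two bundle $\underline{\C}^2$ over $W$ and to $\overline E$, obtaining
\begin{align*}
  2\ind(D_W) &= 2\int_W \widehat A\bigl(W,\nabla^W\bigr) - (\eta+h)(D_M), \\
  \ind\bigl(D_W^{\overline E}\bigr) &= \int_W \widehat A\bigl(W,\nabla^W\bigr)\,\ch\bigl(\overline E,\nabla^{\overline E}\bigr) - \tfrac{\eta+h}2\bigl(D_M^E\bigr),
\end{align*}
where I use $\ch(\underline{\C}^2)=2$, $\ind(D_W^{\underline{\C}^2})=2\ind(D_W)$ and $\tfrac{\eta+h}2(D_M^{\underline{\C}^2})=(\eta+h)(D_M)$. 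Subtracting and inserting the Chern--Weil identity~\eqref{B2.2} turns the difference of interior integrals into $\int_W \widehat A(W)\,\ch'(\overline E)\,c_2(\overline E)$.

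The next step is to recognise this interior integral in terms of $\tau_M(E)$. Since $\widehat A(TW)\ch'(\overline E)$ is closed on $W$ and restricts on $M$ to $\widehat A(TM,\nabla^{TM})\ch'(E,\nabla^E)$, the mapping-cone evaluation~\eqref{B2.4b} applied to the pair~\eqref{B2.4a} representing $\bar c_2(\overline E)$ gives
\begin{equation*}
  \bigl(\widehat A(TW)\,\ch'(\overline E)\,\bar c_2(\overline E)\bigr)[W,M] = \int_W \widehat A(W)\,\ch'(\overline E)\,c_2(\overline E) - \int_M \widehat A\bigl(TM,\nabla^{TM}\bigr)\,(\hat c_2\,\ch')\bigl(E,\nabla^E\bigr).
\end{equation*}
Combining the three displays and comparing with the definitions~\eqref{B1.D1} and~\eqref{B2.4} should collapse everything into the clean identity
\begin{equation*}
  a_{k+1}\bigl(\tau_M(E) - t_M(E)\bigr) = \ind\bigl(D_W^{\overline E}\bigr) - 2\ind(D_W).
\end{equation*}

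The main point is then the integrality of the right-hand side modulo $a_{k+1}$, and I expect this to be the only delicate step. If $k$ is odd then $a_{k+1}=1$ and nothing is to check, since both indices are integers. If $k$ is even then $a_{k+1}=2$ and $\dim W = 4k\equiv 0$ mod~$8$; here $\Sigma W\otimes_\C\overline E$ carries the quaternionic structure $s\otimes j$ commuting with $D_W^{\overline E}$, so by~\eqref{B1.1} and the remark following Theorem~\ref{B2.T1} the index $\ind(D_W^{\overline E})$ is even. In both cases the right-hand side lies in $a_{k+1}\Z$, whence $\tau_M(E)-t_M(E)\in\Z$ and $\tau_M(E)=t_M(E)$ in $\R/\Z$ (and in particular $t_M(E)\in\Q/\Z$ in the bounding case). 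Everything apart from this evenness is bookkeeping with the index theorem.

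For part~\eqref{B2.P1.2}, I would first note that $t$ is additive over disjoint unions, since the $\eta$-invariants, kernel dimensions and interior integrals all add, so $t_{dM}(dE)=d\,t_M(E)$ for $d$ disjoint copies. Because $\Omega^{\Spin}_*\otimes\Q$ is a polynomial algebra on generators in degrees divisible by $4$ and $H_*(BS^3;\Q)$ is likewise concentrated in degrees divisible by $4$, the group $\Omega^{\Spin}_{4k-1}(BS^3)$ is rationally trivial, hence finite; thus $[M,E]$ has some finite order $d$, meaning that $d$ copies of $(M,E)$ bound a pair $(W,\overline E)$. Applying part~\eqref{B2.P1.1} to this bounding pair gives $d\,t_M(E)=t_{dM}(dE)=\tau_{dM}(dE)\in\Q/\Z$, and since $d\,t_M(E)$ is then a rational number modulo $\Z$, so is $t_M(E)$. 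This yields the asserted rationality.
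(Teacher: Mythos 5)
Your proof is correct and follows essentially the same route as the paper: part (1) is the paper's own computation (the Atiyah--Patodi--Singer theorem applied to $D_W^{\overline E}$ and to the untwisted operator, the identity $c_2\,\ch'=2-\ch$, the mapping-cone evaluation of $\bar c_2(\overline E)$, and evenness of the twisted index when $\dim W\equiv 0$ mod $8$), and part (2) likewise reduces to finiteness of $\Omega^{\Spin}_{4k-1}(BS^3)$ followed by an application of part (1). The only cosmetic difference is that the paper invokes Anderson--Brown--Peterson to see that this bordism group is a finite $\Z/2$-vector space (so one may take the multiplicity in $\{1,2\}$), whereas you deduce mere finiteness from rational triviality via the Atiyah--Hirzebruch spectral sequence, which is enough for the rationality claim.
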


\begin{proof}
  For~\eqref{B2.P1.1},
  we use the Atiyah-Patodi-Singer index theorem as in~\cite{Do1}
  and~\cite{KS} together with~\eqref{B1.1}, \eqref{B2.D1}, \eqref{B2.4a}
  and~\eqref{B2.4b}.
  We find that
  \begin{multline*} 
    -\bigl(\widehat A(TW)\,\bar c_2(\overline E)\,\ch'(\overline E)\bigr)[W,M]\\
    \begin{aligned}
    &=-\int_W\widehat A\bigl(TW,\nabla^{TW}\bigr)
		\,(2-\ch)\bigl(\overline E,\nabla^{\overline E}\bigr)\\
    &\qquad
	+\int_M\widehat A\bigl(TM,\nabla^{TM}\bigr)
		\,(\hat c_2\,\ch')\bigl(E,\nabla^E\bigr)\\
    &\equiv\frac{\eta+h}2\bigl(D_M^E\bigr)-(\eta+h)\bigl(D_M\bigr)\\
    &\qquad
	+\int_M\widehat A\bigl(TM,\nabla^{TM}\bigr)
		\,(\hat c_2\,\ch')\bigl(E,\nabla^E\bigr)
	\quad\mod a_{k+1}\Z\;.
    \end{aligned}
  \end{multline*}

By~\cite{ABP} the Spin bordism group~$\Omega_{4j-1}^{\Spin}$ is a finite $\Z/2$-vector space for~$j > 0$.  From the Atiyah-Hirzebruch spectral sequence computing~$\Omega^{\Spin}_{4k-1}(BS^3)$, the fact that~$H^*(BS^3; \Z) = \Z[c_2]$ where~$c_2$ has degree~$4$ and that each~$\H P^k \subset BS^3$ is a spin manifold we see that~$\Omega^{\Spin}_{4k-1}(BS^3)$ is also a finite $\Z/2$-vector space.  Hence for any $S^3$-bundle~$(M, E)$, there is~$n\in\{1,2\}$ such that~$(M, E)^{\sqcup n}$ (or equivalently~$(M, E)^{\conn n}$) bounds a spin manifold~$W$
  with a quaternionic line bundle~$\overline E$ that extends~$E^{\sqcup n}$.
  By~\eqref{B2.P1.1},
  \begin{equation*}
    n\,t_M(E)=\tau_{M^{\sqcup n}}(E^{\sqcup n})\quad\in\quad\Q/\Z\;.
    \qedhere
  \end{equation*}
\end{proof}

The argument above also shows that in general,
the extrinsic definition~\eqref{B1.D1}
gives the value of~$t_M(E)$ only up to multiples of~$\frac 1n$ in~$\Q/\Z$.

We finally give the definition of the $t$-invariant.

\begin{Definition}\label{B2.D1}
  Let~$M$ be a closed smooth spin $(4k-1)$-manifold such that~$H^3(M;\Q)=H^4(M;\Q)=0$, then we use  equation~\eqref{B2.4} above to define the $t$-{\em in\-var\-i\-ant\/} of~$M$ as the function
  $$t_M\colon\Bun(M) \longrightarrow \Q/\Z, \quad E \joinrel\mapstochar\longrightarrow t_M(E).$$
  We shall say that~$M$ and~$N$ have {\em isomorphic $t$-invariants} if there is a group isomorphism~$A \colon H^4(M) \to H^4(N)$ and a set bijection~$B \colon \Bun(M) \to \Bun(N)$ which are compatible with the second Chern class and the $t$-invariant; i.e. 
\[ \forall E \in \Bun(M), \quad c_2(BE) = Ac_2(E) \text{~and~} t_N(BE) = t_M(E) \;.\]
\end{Definition}
We conclude this subsection by recording some basic facts about the $t$-invariant.  
%

\begin{Proposition} \label{B3.P1}
The function~$t_M \colon \Bun(M) \to \Q/\Z$ has the following properties:
\begin{enumerate}
\item \label{B3.P1.1} Triviality: for the trivial bundle $\eps : = M \times \H$, we have ~$t_M(\eps) = 0$.
\item \label{B3.P1.2} Additivity with respect to connected sum of bundles and manifolds:
\[ t_{M_0 \conn M_1}(E_0 \conn E_1) = t_{M_0}(E_0) + t_{M_1}( E_1) \in \Q/\Z.\]
\item \label{B3.P1.3} Naturality under almost-diffeomorphisms: if~$\Sigma$ is a homotopy sphere and~$f\colon N \conn \Sigma \cong M$ is a diffeomorphism then
\[ t_N\circ f^* = t_M \] 
where we identify $N$ and $N \sharp \Sigma$ as spaces so that $\Bun(N \sharp \Sigma) = \Bun(N)$.
\end{enumerate}
\end{Proposition}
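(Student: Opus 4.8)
The plan is to prove the three properties in turn, obtaining~\eqref{B3.P1.3} formally from~\eqref{B3.P1.1} and~\eqref{B3.P1.2}. For the triviality statement~\eqref{B3.P1.1} I would evaluate the intrinsic formula~\eqref{B2.4} using the flat connection $\nabla^\eps$ on $\eps=M\times\H$. With this choice the Chern--Weil representative $c_2(\eps,\nabla^\eps)$ vanishes identically, so one may take $\hat c_2(\eps,\nabla^\eps)=0$ and the curvature integral in~\eqref{B2.4} drops out. Regarding $\eps$ as the complex bundle $M\times\C^2$, the twisted operator splits as $D_M^\eps=D_M\oplus D_M$, so that $\frac{\eta+h}2(D_M^\eps)=(\eta+h)(D_M)$ and the two spectral terms cancel; hence $t_M(\eps)=0$.

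For additivity~\eqref{B3.P1.2} the first point is that~\eqref{B2.4} is manifestly additive over disjoint unions, giving $t_{M_0\sqcup M_1}(E_0\sqcup E_1)=t_{M_0}(E_0)+t_{M_1}(E_1)$; it therefore suffices to compare the connected sum with the disjoint union. I would use the elementary spin bordism $W=(M_0\times[0,1])\bconn(M_1\times[0,1])$ obtained by attaching a single $1$-handle to the top of the two cylinders, whose boundary is $(M_0\sqcup M_1)\sqcup(M_0\conn M_1)$; since $E_0$ and $E_1$ are trivial near the connected-sum points, the product bundles extend over $W$ to a bundle $\overline E$ that is trivial over the handle. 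Running the Atiyah--Patodi--Singer computation of Proposition~\ref{B2.P1} on $(W,\overline E)$ identifies $t_{M_0\conn M_1}(E_0\conn E_1)-t_{M_0\sqcup M_1}(E_0\sqcup E_1)$ with the relative characteristic number $-\frac1{a_{k+1}}\bigl(\widehat A(TW)\,\ch'(\overline E)\,\bar c_2(\overline E)\bigr)[W,\del W]$. The main obstacle is to show that this number vanishes: because $W$ is a boundary connected sum across a disk on which all data are trivial, the transgression form is unaffected by the gluing and the number splits as the sum of the corresponding numbers for the two product cylinders $M_i\times[0,1]$, each of which vanishes by exactly the product computation used after~\eqref{B2.4} to show that $t_M$ is well defined.

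Finally, naturality~\eqref{B3.P1.3} follows from the first two parts together with the diffeomorphism invariance of~\eqref{B2.4}: pulling the geometric data back along $f$ gives $t_M(E)=t_{N\conn\Sigma}(f^*E)$, where $N\conn\Sigma$ carries the smooth structure making $f$ a diffeomorphism. Since $\Sigma$ is a homotopy sphere, $\Sigma\setminus D^{4k-1}$ is contractible, so $f^*E$ is trivial over the $\Sigma$-summand and hence isomorphic to $E_N\conn\eps$, where $E_N$ is the bundle corresponding to $f^*E$ under the identification $\Bun(N\conn\Sigma)=\Bun(N)$ and $\eps$ is trivial over $\Sigma$. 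Additivity~\eqref{B3.P1.2} and triviality~\eqref{B3.P1.1} then give $t_{N\conn\Sigma}(f^*E)=t_N(E_N)+t_\Sigma(\eps)=t_N(f^*E)$, whence $t_N\circ f^*=t_M$. It is essential here that additivity was established with no smoothness hypothesis relating $N$ to $N\conn\Sigma$, since this exotic discrepancy is exactly what the statement asserts is invisible to $t$.
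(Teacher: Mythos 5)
Your proposal is correct and follows essentially the same route as the paper: triviality via the choice $\hat c_2=0$ (with the spectral terms cancelling because $D_M^\eps=D_M\oplus D_M$), additivity via the boundary connected sum $W=(M_0\times[0,1])\bconn(M_1\times[0,1])$ with the glued bundle trivial over the handle so that the relative characteristic number vanishes by Stokes' theorem applied to product-type data, and naturality deduced from the first two parts plus diffeomorphism invariance by writing $f^*E$ as $E_N\conn\eps$ over $N\conn\Sigma$. The only cosmetic difference is that the paper phrases the vanishing in part (2) as exactness of $\bar c_2(\overline E,\nabla^{\overline E})$ in the mapping cone rather than splitting the bordism into the two cylinders, but the underlying computation is the same.
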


\begin{Remark}
  \begin{enumerate}
  \item Properties~\eqref{B3.P1.1}--\eqref{B3.P1.3} hold
    for any generalised Kreck-Stolz invariants as we now explain.
    Let~$M$ a closed odd-dimensional manifold, equipped with a geometric
    Dirac operator~$D$, and let~$P$ denote the corresponding
    local index class.
    For the $t$-invariant, we take the spin Dirac operator
    and the $\widehat A$-class,
    but one could also consider the signature operator and the $L$-class
    or something similar.
    Then let~$E\to M$ be a vector bundle of a specific type.
    If there exists a natural class~$\alpha\in\Omega^\bullet(M)/\im d$
    such that~$d\alpha=\ch(E)-\rk E$,
    then one can consider the invariant
	$$\frac{\eta+h}2(D^E)-\frac{\eta+h}2(D)\,\rk(E)
		-\int_MP\bigl(TM,\nabla^{TM}\bigr)\,\alpha\in\R/a\Z\;,$$
    where~$a \in \Z$ depends on the index problem.  Note that the Atiyah-Patodi-Singer $\rho$- and $\xi$-invariants~\cite[II]{APS} are special cases, where~$(E,\nabla^E)$ is flat,
so one can take~$\alpha=0$.
    The proof below will work equally well for all invariants of this type.
  \item 
    We shall see later in Theorem~\ref{C2.T1} that the $t$-invariant
    is not in general preserved by homeomorphisms.
  \end{enumerate}
\end{Remark}

\begin{proof}[Proof of Proposition~\ref{B3.P1}]
For the trivial bundle~$\eps$, one can choose~$\hat c_2=0$, so~\eqref{B2.4}
gives~$t_M(\eps)=0$, and~\eqref{B3.P1.1} follows.

For~\eqref{B3.P1.2},
we choose points~$p_i\in M_i$ and assume that~$(E_i,\nabla^{E_i})$ is isomorphic
to a trivial bundle over a small disk~$U_i$ around~$p_i$ in~$M_i$.
Let
	$$W=(M_0\times[0,1])\bconn(M_1\times[0,1])$$
denote the boundary connected sum taken at the points~$(p_0,1)$ and~$(p_1,1)$.
Put
	$$M=\partial W=(M_0\conn M_1)\sqcup(-M_0)\sqcup(-M_1)\;.$$

The boundary connected sum of the pullbacks of~$E_i$ with respect to
the given trivialisations gives a bundle~$\overline E\to W$.
The pullback connections are trivial on~$U_i\times[0,1]$,
so we get a connection~$\nabla^{\overline E}$ by gluing.

We note that~$d\hat c^2(E_i,\nabla^{E_i})=c^2(E_i,\nabla^{E_i})$
vanishes on~$U_i$ by assumption.
Since~$U_i$ is contractible,
there are representatives~$\hat c_2(E_i,\nabla^{E_i})\in\Omega^3(M_i)$
that vanish on~$U_0$ and~$U_1$ as well.
By pullback and gluing,
we obtain a
global form~$\hat c_2\bigl(\overline E,\nabla^{\overline E}\bigr)\in\Omega^3(W)$ with
	$$d\hat c_2\bigl(\overline E,\nabla^{\overline E}\bigr)
	=c_2\bigl(\overline E,\nabla^{\overline E}\bigr)\;.$$
In particular, the lift~$\bar c_2(\overline E,\nabla^{\overline E})$ is exact
in the mapping cone of the pullback~$\Omega^\bullet(W)\to\Omega^\bullet(M)$.

From the extrinsic description~\eqref{B1.D1} 
and Stokes' theorem,
we obtain
\begin{multline*} 
  t_{M_0 \conn M_1}(E_0 \conn E_1) - t_{M_0}(E_0) - t_{M_1}( E_1)
  =\tau_M(E)\\
  =-\int_W\widehat A\bigl(TW,\nabla^{TW}\bigr)
		\,(d\hat c_2\,\ch')\bigl(\overline E,\nabla^{\overline E}\bigr)\\
	+\int_{\partial W}\widehat A\bigl(TM,\nabla^{TM}\bigr)
		\,(\hat c_2\,\ch')\bigl(E,\nabla^E\bigr)
  =0\;.
\end{multline*}

Naturality in~\eqref{B3.P1.3} is clear for diffeomorphisms
and follows from additivity for almost-diffeomorphisms:
add the trivial bundle~$(\Sigma,  \eps)$ to~$(M,E)$.
\end{proof}

\subsection{Divisors}\label{A3}
In this subsection we develop the concept of the {\em divisor}~$(Y, \nu)$
of a quaternionic line bundle~$E$ in analogy with divisors of
complex line bundles.
The main result is Proposition~\ref{B3.P2} which explains
how the $\eta$-invariants involved in the intrinsic definition of~$t_M$
localise near a divisor.

Complex line bundles on algebraic varieties are characterised by divisors,
that is, by subvarieties of codimension~$1$.
A similar construction still works for smooth manifolds.
Let~$L\to M$ be a complex line bundle, and let~$s\in\Gamma(L)$ be a section
that intersects the zero section~$M\subset L$ transversally.
Let~$\nu\to Y$ be the normal bundle of the zero set~$Y=s^{-1}(0)$ of~$s$,
then~$\nu$ inherits a complex structure,
which is equivalent to the choice of an orientation.
Thus a divisor for a complex line bundle is a closed smooth submanifold of
codimension~$2$ with a normal orientation.
Two divisors give the same smooth complex line bundle iff they represent
the same class in~$H^2(M)$.

If~$E\to M$ is a quaternionic line bundle,
we choose a transversal section~$s\in\Gamma(E)$ as above
and put~$Y=s^{-1}(0)$.
Then~$ds|_Y\colon\nu\to E|_Y$ is an isomorphism of real vector bundles,
so~$\nu$ inherits a quaternionic structure.
In particular, the normal bundle is oriented,
and hence defines a class~$c_2(E)\in H^4(M)$.

\begin{Definition}\label{A3.D1}
  Let~$M$ be a compact smooth manifold.
  A {\em quaternionic divisor\/} in~$M$ is a compact codimension-$4$ submanifold~$Y \subset M$
  together with a quaternionic structure on its normal bundle.
 We 
 assume that~$Y$ meets~$\partial M$ transversally in~$\partial Y$.  If~$M$ is closed, two quaternionic divisors in~$M\times\{0\}$, $M\times\{1\}$
  are {\em equivalent\/} if they extend to a quaternionic divisor
  in~$M\times[0,1]$.
\end{Definition}

Note that~$\del Y\subset\partial M$, in particular~$\del Y=\emptyset$ if~$M$ is closed.  The equivalence of quaternionic divisors can be defined similarly in the case where~$\del M\ne\emptyset$.

For example, $\H P^k\subset\H P^{k+1}$ with the obvious quaternionic structure on its normal bundle is a divisor for the tautological bundle~$H\to\H P^{k+1}$.  To reconstruct~$E\to M$ from a divisor~$(Y,\nu)$,
let~$\nu\to Y$ be classified by~$\xi\colon Y\to BS^3$.
Because~$M$ is compact,
$\xi$ factors through some~$\H P^k\subset\H P^\infty=BS^3$.
The Thom space of the tautological bundle over~$\H P^k$ can be identified
with~$\H P^{k+1}$.
Hence, the Pontrjagin-Thom construction gives a classifying map for~$E$
as in the following diagram.
$$\begin{CD}
  M@>>>\H P^{k+1}@>>>\H P^\infty\\
  @AAA@AAA@AAA\\
  Y@>\xi>>\H P^k@>>>\H P^\infty
\end{CD}$$
The vertical arrow on the right is given by inserting~$0$ as first coordinate
and shifting all other coordinates one place to the right.
One similarly checks that two quaternionic divisors in~$M$ give rise to the
same quaternionic line bundle iff they are equivalent.

Representing a quaternionic line bundle~$E\to M$ by a
quaternionic divisor~$(Y,\nu)$ in~$M$ allows us to replace the $\eta$-invariants
in~\eqref{B2.4} by the $\eta$-invariant of an untwisted Dirac operator on~$Y$.
However,
we will see in Remark~\ref{B3.R2} below that it is not possible
to express~$t_M(E)$ solely in terms of~$(Y,\nu)$ without referring
to the ambient manifold~$M$.

For motivation, we assume first that~$(M,E)$ bounds.
Hence let~$\overline E\to W$ be an extension of~$E\to M$,
and let~$s\in\Gamma(\overline E)$ be a transversal section of~$\overline E$
such that~$s|_M\in\Gamma(E)$ is transversal as well.
Then we obtain a divisor
	$$(X,\nu) := \bigl(s^{-1}(0),\overline E|_{s^{-1}(0)}\bigr)$$
of~$\overline E$ such that~$Y := X \cap M$ is a divisor for~$M$.

Let~$\eps \to W$ again denote the trivial quaternionic line bundle.
The normal bundle~$\nu\cong\overline E_\R|_X$ carries a natural spin structure
with half spinor bundles~$\Sigma_\nu^+\cong\eps|_X$
and~$\Sigma_\nu^-\cong\overline E|_X$.
This implies that~$\eps-\overline E$ is a $K$-theoretic direct image
of~$\eps|_X$ under the inclusion~$X\into W$ in the sense of~\cite{AH}
and~\cite{BZ2}; in particular 
\begin{equation}\label{A3.1} 
  \widehat A(\overline E_\R|_X)\,\ch'(\overline E|_X)=1\;.
\end{equation}
This equation can also be derived directly using the splitting principle.

The bundles~$\overline E\to M$ and~$\nu\to X$ are naturally
oriented as real vector bundles by their quaternionic structures.
Let~$\Omega^\bullet_{(1)}$ denote the space of $L^1$-forms,
and let~$\psi(\overline E,\nabla^{\overline E})\in\Omega^3_{(1)}(\overline E)$
denote the Mathai-Quillen current described in~\cite{BZ1} with
	$$d\psi\bigl(\overline E,\nabla^{\overline E}\bigr)
	=\pi^*c_2\bigl(\overline E,\nabla^{\overline E}\bigr)-\delta_0
	\quad\in\Omega^4_{(1)}(\overline E)\;,$$
where~$\delta_0$ denotes the distribution of integration over the zero section.
This current can be pulled back to~$W$ by the transversal section~$s$,
and we have
\begin{equation}\label{A3.2}
  d\bigl(s^*\psi(\overline E,\nabla^{\overline E})\bigr)
  =c_2\bigl(\overline E,\nabla^{\overline E}\bigr)-\delta_X
	\quad\in\Omega^4_{(1)}(W)\;,
\end{equation}
where~$\delta_X$ now denotes the current of integration over~$X$.
We conclude that
\begin{equation}\label{A3.3}
  d\bigl((\hat c_2-s^*\psi)(E,\nabla^{E})\bigr)
  =\delta_Y\in\Omega^4_{(1)}(M)\;.
\end{equation}
By our assumptions on~$M$,
this property determines~$(\hat c_2-s^*\psi)(E,\nabla^{E})$ up
to exact currents.

The connection~$\nabla^{\overline E}$ induces a natural Riemannian metric
on the total space of~$\overline E\to W$,
and we assume that~$ds$ induces an isometry of a neighbourhood of~$X$
with a neighbourhood of the zero section in~$\overline E|_X$.
By~\eqref{B1.D1}, \eqref{A3.1} and~\eqref{A3.2},
\begin{align} 
  \begin{split}\label{B3.1}
    \tau_M(E)
    &= \frac{-1}{a_{k+1}}\int_W\widehat A\bigl(TW,\nabla^{TW}\bigr)
		\,(c_2\,\ch')\bigl(\overline E,\nabla^{\overline E}\bigr)\\
    &\qquad
	+\frac1{a_{k+1}}\int_M\widehat A\bigl(TM,\nabla^{TM}\bigr)
		\,(\hat c_2\,\ch')\bigl(E,\nabla^E\bigr)\\
    &=\frac{-1}{a_{k+1}}\int_X\widehat A\bigl(TX,\nabla^{TX}\bigr)\\
    &\qquad
	+\frac1{a_{k+1}}\int_M\widehat A\bigl(TM,\nabla^{TM}\bigr)
		\,\bigl((\hat c_2-s^*\psi)\,\ch'\bigr)\bigl(E,\nabla^E\bigr)\;.
  \end{split}
\end{align}

Since we have assumed that~$M$ and~$W$ are spin,
there are induced spin structure on~$X$ and~$Y=X\cap M$.
We consider the untwisted Dirac operators~$D_X$ and~$D_Y$
with respect to these spin structures.
Note that~$\Sigma X$ and~$\Sigma Y$ carry a quaternionic structure
iff~$\dim X\cong 4$ mod~$8$.
But this is the case iff~$\Sigma W$ and~$\Sigma M$ carry real structures,
and hence~$D_M^E$ respects a quaternionic structure on~$\Sigma M\otimes E$.

\begin{Proposition}\label{B3.P2}
  Let~$(Y,\nu)$ be a divisor of a quaternionic line bundle~$E\to M$
  and assume that a neighbourhood of~$Y$ in~$M$ is isometric to
  a neighbourhood of the zero section of~$\nu$,
  where the metric on the total space of~$\nu$ is induced by
  a quaternionic metric and a compatible connection.
  Let~$\alpha_Y\in\Omega^3_{(1)}(M)$ denote a current with~$d\alpha_Y=\delta_Y$.
  Then
  \begin{equation*} 
    t_M(E)
    =\frac{-1}{a_{k+1}}\,\biggl(\frac{\eta+h}2(D_Y)
	-\int_M\widehat A\bigl(TM,\nabla^{TM}\bigr)\,\alpha_Y\,\ch'(E,\nabla^E)
	\biggr)\;.
  \end{equation*}
\end{Proposition}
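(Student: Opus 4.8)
The plan is to first dispatch the case where $(M,E)$ bounds, for which the motivation above is almost complete, and then to upgrade to the general intrinsic statement by a localisation argument for $\eta$-invariants along the divisor.

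First I would treat the bounding case $(M,E)=\partial(W,\overline E)$. With a transversal section $s$ as in the motivation we obtain the divisor $(X,\nu)$ with $\partial X=Y$, and~\eqref{B3.1} expresses $\tau_M(E)$ through $\int_X\widehat A(TX,\nabla^{TX})$ and a boundary integral over $M$. Applying the Atiyah--Patodi--Singer index theorem~\ref{B2.T1} to the \emph{untwisted} Dirac operator $D_X$ on the spin manifold $X$ with $\partial X=Y$ gives
\begin{equation*}
  \int_X\widehat A\bigl(TX,\nabla^{TX}\bigr)=\ind(D_X)+\tfrac{\eta+h}2(D_Y)\;.
\end{equation*}
Since $\dim X=4(k-1)$, the bundle $\Sigma X$ carries a quaternionic structure exactly when $a_{k+1}=2$, so $\ind(D_X)$ is then even as in~\eqref{B1.1} and $\ind(D_X)/a_{k+1}$ is always an integer. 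Substituting into~\eqref{B3.1}, replacing $\tau_M(E)$ by $t_M(E)$ via Proposition~\ref{B2.P1}~\eqref{B2.P1.1}, and identifying $\alpha_Y=(\hat c_2-s^*\psi)(E,\nabla^E)$ by~\eqref{A3.3}, then yields the asserted formula exactly in $\Q/\Z$.

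In general $(M,E)$ need not bound, and the bordism reduction from the proof of Proposition~\ref{B2.P1} — passing to $(M,E)^{\sqcup n}$ with $n\in\{1,2\}$ — recovers the identity only up to a multiple of $\tfrac1n$. To pin down the exact value I would argue intrinsically. Note first that the right-hand side is independent of the choice of $\alpha_Y$: two choices differ by a closed $L^1$-current, which is exact because $H^3(M;\Q)=0$, so the difference of the two correction integrals vanishes by Stokes' theorem for currents. The equality of the two sides is then a localisation (embedding) formula for $\eta$-invariants. Since $t_M(E)$ is independent of the geometric data, I may use the isometric identification of a neighbourhood of $Y$ with the disk bundle of $\nu$ and scale the section $s\mapsto Ts$, evaluating in the limit $T\to\infty$. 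In this adiabatic limit $D_M^E$ concentrates along $Y$, and the reduced $\eta$-invariant $\tfrac{\eta+h}2(D_M^E)-(\eta+h)(D_M)$ converges, modulo $a_{k+1}\Z$, to $-\tfrac{\eta+h}2(D_Y)+\int_M\widehat A(TM,\nabla^{TM})\,s^*\psi\,\ch'(E,\nabla^E)$, which by~\eqref{A3.3} is exactly the claimed expression. The relevant $K$-theoretic direct-image structure is~\eqref{A3.1}, and the local transgression is governed by the Mathai--Quillen current of~\cite{BZ1,BZ2}.

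The main obstacle is this adiabatic analysis: one must control the small eigenvalues of $D_M^E$ as $T\to\infty$ together with the associated spectral flow, and identify the limiting contribution as precisely $D_Y$ plus the Mathai--Quillen term with no spurious constant. The quaternionic structure on $\Sigma M\otimes_\C E$ (equivalently on $\Sigma Y$) enters here to guarantee that the relevant jumps are even, so that the resulting identity is well defined modulo $a_{k+1}\Z$ and, combined with the bounding-case computation, removes the residual $\tfrac12$-ambiguity. Everything else — the reduction to convenient product geometry near $Y$, the well-definedness of both sides, and the bookkeeping of signs — is routine given the variation formulas already used for~\eqref{B2.4}.
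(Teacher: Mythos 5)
Your treatment of the bounding case is exactly the paper's: combine \eqref{B3.1} with the Atiyah--Patodi--Singer theorem~\ref{B2.T1} applied to the untwisted operator~$D_X$ on~$X$ with~$\partial X=Y$, absorb~$\ind(D_X)$ into~$a_{k+1}\Z$ using the quaternionic structure on~$\Sigma X$ when~$a_{k+1}=2$, and identify~$\alpha_Y=(\hat c_2-s^*\psi)(E,\nabla^E)$ via~\eqref{A3.3}. That part is correct and complete.

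For the general case, however, there is a gap as written. The paper does not perform any adiabatic analysis: it invokes the Bismut--Zhang embedding formula for $\eta$-invariants of direct images~\cite{BZ2} and applies it directly to~\eqref{B2.4}, using \eqref{A3.1} and Remark~\ref{B3.R1} to replace Bismut--Zhang's current~$\gamma$ by~$(s^*\psi\,\ch')(E,\nabla^E)$. What you propose instead --- scaling the section~$s\mapsto Ts$, letting~$T\to\infty$, controlling the small eigenvalues and spectral flow of~$D_M^E$, and identifying the limiting contribution as~$D_Y$ plus the Mathai--Quillen transgression --- is essentially a re-derivation of that theorem, and you explicitly flag this analysis as ``the main obstacle'' without carrying it out. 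Since the entire content of the proposition beyond the bordism argument \emph{is} this localisation statement, leaving it unproven leaves the non-bounding case unestablished; the repair is simply to quote~\cite{BZ2} rather than reprove it. Two smaller points: once the embedding formula is applied directly to~\eqref{B2.4} there is no residual additive constant left to normalise against the bounding case, so that step of your argument is superfluous; and the sign of the~$s^*\psi$ term in your claimed limit is opposite to what \eqref{A3.3} with the convention~$d(s^*\psi)=c_2-\delta_X$ requires, namely the eta-difference must converge to~$-\frac{\eta+h}2(D_Y)-\int_M\widehat A\,\bigl(s^*\psi\,\ch'\bigr)(E,\nabla^E)$ modulo~$a_{k+1}\Z$.
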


\begin{proof}
  If~$(M,E)$ bounds,
  this follows from~\eqref{A3.3}, \eqref{B3.1} and the Atiyah-Patodi-Singer
  index theorem~\ref{B2.T1} applied to the untwisted Dirac operator~$D_X$
  on~$X$.

  If~$(M,E)$ does not bound,
  we use Bismut-Zhang's formula for the $\eta$-invariant of direct
  images in~\cite{BZ2} to derive the result directly from~\eqref{B2.4}.
\end{proof}

\begin{Remark}\label{B3.R1}
  Note that Bismut and Zhang use a current~$\gamma\in\Omega^\bullet_{(1)}(M)$
  in~\cite{BZ2} such that
  \begin{align*} 
    d\gamma
    &=2-\ch\bigl(E,\nabla^E\bigr)
        -\widehat A\bigl(\nu,\nabla^\nu\bigr)^{-1}\,\delta_Y\;. 
  \end{align*}
  By~\eqref{B1.2}, \eqref{A3.1} and~\eqref{A3.2},
  the current~$(s^*\psi\,\ch')(E,\nabla^E)-\gamma$ is closed.
  Because in our special situation,
  the current~$\gamma$ of~\cite{BZ2} is constructed directly in terms of~$s$,
  there exists a natural current on~$E$ whose pullback
  by~$s$ becomes~$\gamma$.
  In particular,
  we can work with universal currents on~$\H P^N$ for~$N$ sufficiently large.
  Since~$\H P^N$ has no cohomology in degrees~$4k-1$,
  we conclude that~$(s^*\psi\,\ch')(E,\nabla^E)-\gamma$ is exact,
  so that we replace~$\gamma$ by~$(s^*\psi\,\ch')(E,\nabla^E)$.
\end{Remark}

\begin{Remark}\label{B3.R2}
  There is a fundamental difference between
  the class~$\hat c_2\in\Omega^3(M)/\im d$
  and the currents~$s^*\psi$ and~$\gamma\in\Omega^3_{(1)}(M)$.
  The latter are natural in~$E$ and~$s$ as we have seen in Remark~\ref{B3.R1}.
  In particular,
  we can choose~$s$ in such a way that~$\norm s=1$
  outside a small neighbourhood~$U$ of~$Y$.
  This allows us to choose~$\nabla^E$ in such a way that~$\nabla^Es=0$
  outside~$U$.
  In other words,
  contributions coming from~$s^*\psi$ or~$\gamma$ localise near~$Y$
  and do not involve any other topological information about~$M$.

  On the other hand,
  the class~$\hat c_2$ is constructed using the assumption
  that~$H^3(M;\Q)=H^4(M;\Q)=0$.
  In particular,
  the class~$\hat c_2$ is not natural in~$E$,
  but depends on the topology of~$M$.
  This phenomenon will be exhibited again in Remark~\ref{B3.R3} below.
\end{Remark}

\subsection{The relationship with the Adams \texorpdfstring{$e$}{e}-invariant}\label{B3}
Following the ideas above,
we can identify certain cases where the $t$-invariant can be related
to the Adams $e$-invariant.

\begin{Proposition}\label{B3.P3}
  Let~$M$ be a stably framed $(4k-1)$-manifold with~$k \geq 3$ and~$H^3(M;\Q)=H^4(M;\Q) = 0$.
  Assume that~$E\to M$ is a quaternionic line bundle
  with a classifying map factoring
  through~$\xi\colon M\to S^4\cong\H P^1\subset\H P^\infty$ and that~$x_0\in S^4$ is a regular value of~$\xi$.  If~$Y=\xi^{-1}(x_0)$ then~$Y$ inherits a stable framing from~$M$ and~$\xi$,
  and we have
  \begin{equation*}
    t_M(E)=-e(Y)\;.
  \end{equation*}
\end{Proposition}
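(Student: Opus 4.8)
The plan is to combine the divisor formula of Proposition~\ref{B3.P2} with a careful analysis of the framing situation to reduce $t_M(E)$ to the Adams $e$-invariant of the framed manifold $Y$. The key observation is that the hypotheses are tailored so that all the ambient-topology contributions vanish, leaving only the intrinsic $\eta$-invariant of the untwisted Dirac operator on $Y$.

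\medskip

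First I would verify that $(Y,\nu)$ is a quaternionic divisor for $E$ in the sense of Definition~\ref{A3.D1}. Since the classifying map factors as $\xi\colon M\to S^4\cong\H P^1\subset\H P^\infty$ and $x_0$ is a regular value, $Y=\xi^{-1}(x_0)$ is a closed codimension-$4$ submanifold whose normal bundle is $\xi^*(\nu_{x_0})$, where $\nu_{x_0}$ is the normal space to $x_0$ in $S^4$. Because $\xi$ lands in $\H P^1$, this normal bundle is trivialised (a point in $S^4$ has trivial normal bundle), and the quaternionic structure is inherited in the standard way. The induced stable framing on $Y$ comes from restricting the stable framing of $M$ together with the trivialisation of $\nu$ provided by $\xi$.

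\medskip

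The heart of the argument is the application of Proposition~\ref{B3.P2}. With $(Y,\nu)$ as above and the metric arranged to be of product type near $Y$, that proposition gives
\[
  t_M(E)=\frac{-1}{a_{k+1}}\,\biggl(\frac{\eta+h}2(D_Y)
	-\int_M\widehat A\bigl(TM,\nabla^{TM}\bigr)\,\alpha_Y\,\ch'(E,\nabla^E)
	\biggr)\;.
\]
I would show that the integral term vanishes. Since $M$ is stably framed, $\widehat A(TM,\nabla^{TM})$ can be represented by a form that is trivial in positive degrees (the framing trivialises $TM$ stably, so all Pontryagin forms vanish), leaving only the degree-zero part equal to $1$. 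Then the remaining integral $\int_M \alpha_Y\,\ch'(E,\nabla^E)$ reduces, via the factorisation through $S^4$ and Remark~\ref{B3.R2}, to a pairing supported near $Y$; here the key point is that $\ch'(E)$ begins in degree $4$ while $\alpha_Y$ is a current dual to the codimension-$4$ class $[Y]$, and the degree count against the framing-trivialised $\widehat A$ forces this contribution to vanish (equivalently, $c_2(E)^2=0$ for a bundle pulled back from $\H P^1$). This leaves
\[
  t_M(E)=\frac{-1}{a_{k+1}}\cdot\frac{\eta+h}2(D_Y)\;.
\]

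\medskip

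Finally I would identify $\frac{1}{a_{k+1}}\cdot\frac{\eta+h}{2}(D_Y)$ with the Adams $e$-invariant $e(Y)$ of the framed $(4k-5)$-manifold $Y$. This is the standard description of $e$ as the $\widehat A$-defect (or reduced $\eta$-invariant) of the untwisted Dirac operator on a stably framed manifold via the Atiyah--Patodi--Singer index theorem: choosing any spin coboundary $Z$ with $\partial Z=Y$, the framing makes the $\widehat A$-integral over $Z$ a specific rational number whose fractional part is $-e(Y)$, and APS (Theorem~\ref{B2.T1}, untwisted) expresses this in terms of $\frac{\eta+h}{2}(D_Y)$. The normalising factor $a_{k+1}$ accounts for the quaternionic structure on $\Sigma Y$ when $\dim Y\equiv 4$ mod $8$, matching the factor $2$ built into the definition of $e$ on the relevant stem. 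Assembling these identifications yields $t_M(E)=-e(Y)$, as claimed.

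\medskip

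\emph{The main obstacle} I anticipate is the vanishing of the integral term and the precise bookkeeping of the normalising constant $a_{k+1}$ against the conventional normalisation of the Adams $e$-invariant. The degree argument showing $\int_M\widehat A\,\alpha_Y\,\ch'(E)=0$ must be made rigorous using the framing (not merely the rational cohomology hypotheses), and reconciling the factor-of-two conventions in $\frac{\eta+h}{2}$ with the definition of $e$ on the $(4k-5)$-stem requires care to get the sign and normalisation exactly right.
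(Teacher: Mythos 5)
Your overall strategy --- feed the divisor $(Y,\nu)$ into Proposition~\ref{B3.P2} and then recognise the Adams $e$-invariant via the Atiyah--Patodi--Singer description --- is exactly the paper's. But the middle of your argument rests on two compensating errors: each of the two steps is false as stated, even though the final answer comes out right.

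First, the claim that the stable framing lets you ``represent $\widehat A(TM,\nabla^{TM})$ by a form that is trivial in positive degrees'' is not available to you. The form in Proposition~\ref{B3.P2} is the Chern--Weil form of the Levi--Civita connection; its positive-degree components are exact, not zero. The framing only supplies a transgression form $\widetilde A$ with $\widehat A(TM,\nabla^{TM})=1+d\widetilde A$, and you cannot replace $\widehat A$ by $1$ inside $\int_M\widehat A\,\alpha_Y\,\ch'$ because $\alpha_Y$ is not closed: since $d\alpha_Y=\delta_Y$ and $\ch'(E,\nabla^E)|_Y=1$ for a connection pulled back from $\H P^1$, Stokes gives $\int_M d\widetilde A\,\alpha_Y\,\ch'=\pm\int_Y\widetilde A$, which is generically nonzero. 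Only the remaining piece $\int_M\alpha_Y\,\ch'$ dies by your degree count ($\ch'-1$ is concentrated in degree $4$, so $\alpha_Y\,\ch'$ has degree $\le 7<4k-1$; this is where $k\ge3$ enters). Second, your identification of $e(Y)$ with a multiple of the bare $\frac{\eta+h}2(D_Y)$ drops the same term from the other side: the reduced $\eta$-invariant alone depends on the metric and is not a framed-bordism invariant. The APS formula is $e(Y)=\frac1{a}\bigl(\frac{\eta+h}2(D_Y)-\int_Y\widetilde A\bigr)$, with the correction term determined by the framing. The actual content of the proof is that the $\int_Y\widetilde A$ you discarded from the integral in Proposition~\ref{B3.P2} is precisely the $\int_Y\widetilde A$ needed to convert $\frac{\eta+h}2(D_Y)$ into $e(Y)$; the paper exhibits this cancellation in one Stokes computation, $\int_M\bigl(\alpha_Y\,\ch'+d(\widetilde A\,\alpha_Y\,\ch')\bigr)=0$. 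As written, your proof asserts that the two quantities vanish separately, which is false; you need to show that they cancel.
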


\begin{proof}
  By assumption,
  the manifold~$M$ is framed.
  The framing in particular selects a Chern-Weil
  type form~$\widetilde A=\widetilde A(TM\oplus\R^N,\nabla^0,\nabla^{TM\oplus\R^N})$
  such that
  \begin{equation*}
    \widehat A\bigl(TM,\nabla^{TM}\bigr)
    =1+d\widetilde A\;.
  \end{equation*}
  We may assume that~$Y$ is totally geodesic in~$M$.
  Because the normal bundle to~$Y$ is trivialised by~$d\xi$,
  we also have
  \begin{equation*}
    \widehat A\bigl(TY,\nabla^{TY}\bigr)
    =1+d\widetilde A|_Y\;.
  \end{equation*}
  Following~\cite[II, Theorem~4.14]{APS},
  we write the $e$-invariant of~$Y$ with the induced framing as
  \begin{equation*} 
    e(Y)
    =\frac1{a_k}\,\biggl(\frac{\eta+h}2(D_Y)-\int_Y\widetilde A\biggr)\;.
  \end{equation*}

  Consider the tautological bundle~$\tau\to\H P^1$ and assume that~$s_0$
  is a transversal section with~$s_0^{-1}=\{x_0\}$.
  Then~$s=\xi^*s_0$ is a transversal section of~$E$
  with divisor~$Y$.
  Let us pull~$\nabla^E$ back from a connection~$\nabla^\tau$ on~$\H P^1$,
  then~$\ch'(E,\nabla^E)|_Y=1$.
  From Proposition~\ref{B3.P2} and with~$d\alpha_Y=\delta_Y$,
  we have
  \begin{align*} 
    e(Y)+t_M(E)
    &=\frac1{a_k}\,\biggl(\int_M\widehat A\bigl(TM,\nabla^{TM}\bigr)\,\alpha_Y
		\,\ch'\bigl(E,\nabla^E\bigr)-\int_Y\widetilde A\biggr)\\
    &=\frac1{a_k}\,\int_M\Bigl(\alpha_Y\,\ch'\bigl(E,\nabla^E\bigr)
	+d\bigl(\widetilde A\,\alpha_Y\,\ch'(E,\nabla^E)\bigr)\Bigr)
    =0\;,
  \end{align*}
  because~$\ch'(E,\nabla^E)-1\in\Omega^4(M)$ by naturality,
  hence~$\alpha_Y\,\ch'(E,\nabla^E)$ lives in degree~$\le 7<4k-1$.
\end{proof}

The $t$-invariant is clearly unstable from the point of view of $K$-theory,
because any invariant that is stable under addition of trivial vector bundles
is generated by characteristic classes.
On the other hand,
the following result shows that the $t$-invariant on~$S^{4k-1}$ is stable
in the sense of stable homotopy theory.

\begin{Theorem}\label{B3.T1}
Assume that~$k\ge 3$.
For any homotopy sphere~$\Sigma^{4k-1}$ the homomorphism
\[ t_\Sigma \colon \Bun(\Sigma) \cong \pi_{4k-2}(S^3) \longrightarrow \Q/\Z\]
may be identified with the composition
\[ - e \circ S \colon \pi_{4k-2}(S^3) \longrightarrow \pi_{4k-5}^S \longrightarrow \Q/\Z\]
where~$- e \colon \pi_{4k-5}^S \to \Q/\Z$ and~$S \colon \pi_{4k-2} \to \pi_{4k-5}^S$ are respectively the the negative of the Adams $e$-invariant~\cite{A} and the stabilisation homomorphism.
\end{Theorem}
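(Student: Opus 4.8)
The plan is to reduce to the standard sphere, to observe that over $S^{4k-1}$ \emph{every} quaternionic line bundle factors through $\H P^1=S^4$, and then to invoke Proposition~\ref{B3.P3}.

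First I would record that it suffices to treat the standard sphere. Since $\Sigma$ is a homotopy sphere there is a diffeomorphism $S^{4k-1}\conn\Sigma\cong\Sigma$, and naturality under almost-diffeomorphisms, Proposition~\ref{B3.P1}\eqref{B3.P1.3}, identifies $t_\Sigma$ with $t_{S^{4k-1}}$ under the homotopy-theoretic identification $\Bun(\Sigma)\cong\Bun(S^{4k-1})\cong\pi_{4k-2}(S^3)$. Additivity, Proposition~\ref{B3.P1}\eqref{B3.P1.2}, together with naturality shows that $t_{S^{4k-1}}$ respects the group structure on $\pi_{4k-2}(S^3)$, so, like $-e\circ S$, it is a homomorphism; it then suffices to compute $t_{S^{4k-1}}(E_\alpha)$ for an arbitrary class $\alpha\in\pi_{4k-2}(S^3)$.

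The key topological input is that the inclusion $j\colon\H P^1=S^4\into\H P^\infty=BS^3$ induces on $\pi_{4k-1}$ exactly the connecting homomorphism $\partial\colon\pi_{4k-1}(S^4)\to\pi_{4k-2}(S^3)$ of the quaternionic Hopf fibration $S^3\to S^7\to S^4$, by naturality of connecting maps for the universal bundle pulled back along $j$. As $S^7$ is $6$-connected the fibre inclusion $S^3\into S^7$ is null-homotopic, hence $i_*=0$ and the long exact sequence of the fibration forces $\partial$ to be surjective. Consequently every $\alpha$ lifts to some $\beta\in\pi_{4k-1}(S^4)$, i.e. the classifying map of $E_\alpha$ may be taken to factor as $S^{4k-1}\xrightarrow{\beta}S^4\xrightarrow{j}\H P^\infty$. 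This is precisely what lets Proposition~\ref{B3.P3} (whose hypotheses, including $k\ge 3$, are met since homotopy spheres are stably framed and have $H^3(\Sigma;\Q)=H^4(\Sigma;\Q)=0$) apply to \emph{every} bundle. Choosing a regular value $x_0$ and setting $Y=\beta^{-1}(x_0)$, it yields $t_{S^{4k-1}}(E_\alpha)=-e(Y)$, where $Y$ is a stably framed $(4k-5)$-manifold and $e$ is the Adams $e$-invariant of its framed bordism class $[Y]\in\pi_{4k-5}^S$; by Pontryagin-Thom $[Y]$ is the image of $\beta$ under the stabilisation $\pi_{4k-1}(S^4)\to\pi_{4k-5}^S$. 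Thus the theorem reduces to the identity $e([Y])=e(S\alpha)$.

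This last identification is the main obstacle. Writing the stabilisation as the composite $S\colon\pi_{4k-2}(S^3)\xrightarrow{\Sigma}\pi_{4k-1}(S^4)\to\pi_{4k-5}^S$, whose second arrow is exactly the map sending $\beta\mapsto[Y]$, the required equality becomes $e([Y])=e(S'\beta)$ on one side and $e(S\alpha)=e(S'\Sigma\alpha)$ on the other, so it amounts to showing that $\beta$ and the suspension $\Sigma\alpha=\Sigma\partial\beta$ have the same $e$-invariant after stabilisation. The difference between these two classes is governed by the EHP sequence for $S^3$ (its James-Hopf invariant lands in $\pi_{4k-1}(S^7)$), and the heart of the argument is to verify that this difference is invisible to the $e$-invariant once stabilised—equivalently, that the connecting homomorphism of the quaternionic Hopf fibration is compatible with suspension up to terms killed by $e$. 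Establishing this stable compatibility, which guarantees that $e([Y])$ depends only on $\alpha=\partial\beta$ and equals $e(S\alpha)$, is where I expect the real work to lie.
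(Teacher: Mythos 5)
Your setup reproduces the paper's argument step for step: reduce to a single sphere, use surjectivity of the Hopf boundary map $\del\colon\pi_{4k-1}(S^4)\to\pi_{4k-2}(S^3)$ to factor every classifying map through $S^4\subset BS^3$, and apply Proposition~\ref{B3.P3} to the divisor $Y=\beta^{-1}(x_0)$, whose framed bordism class is the stabilisation of $\beta\in\pi_{4k-1}(S^4)$. But the proposal then stops at exactly the point you yourself flag: you never prove that $e$ of the stabilisation of the chosen lift $\beta$ agrees with $e(S\alpha)$ for $\alpha=\del\beta$. A proof that ends by naming the remaining difficulty is not a proof, so this is a genuine gap, and it is not a cosmetic one: two lifts of the same $\alpha$ differ by a class $\nu\circ\gamma$ with $\gamma\in\pi_{4k-1}(S^7)$, which dies under $\del$ but need not die under stabilisation, and for $k=2$ (where $\gamma\in\pi_7(S^7)\cong\Z$ contributes $\pm c/24$ to the $e$-invariant) the two sides of your desired identity really are different -- this is precisely the content of Remark~\ref{B3.R3}. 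Moreover the route you sketch for closing the gap, namely showing that the stabilised difference is ``invisible to $e$'', is more delicate than it looks: the $e$-invariant does not vanish on all decomposables (for instance $e(\eta^3)=\tfrac12$ in $\pi_3^S$), so one would have to argue specifically about products with $\nu$ against classes in stem $4k-8\ge 4$.

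The missing observation that closes the gap cleanly is that you are free to choose the lift, and a suspension always works. Indeed $\del(\Sigma\alpha)=\del(\iota_4\circ\Sigma\alpha)=\del(\iota_4)\circ\alpha=\pm\alpha$, because $\del\iota_4\in\pi_3(S^3)$ is the clutching class of the quaternionic Hopf bundle, hence a generator. So every $\alpha\in\pi_{4k-2}(S^3)$ admits the lift $\beta=\Sigma\alpha$, for which the stabilisation in $\pi^S_{4k-5}$ is tautologically $S\alpha$; Proposition~\ref{B3.P3} then gives $t_{S^{4k-1}}(E_{\pm\alpha})=-e(S\alpha)$, and since both $t_{S^{4k-1}}$ and $e\circ S$ are homomorphisms the residual sign is a global one absorbed into the identification $\Bun(\Sigma)\cong\pi_{4k-2}(S^3)$. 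No EHP analysis or vanishing statement for $e$ on products is then required. For what it is worth, the paper's own proof is equally terse at this spot -- it simply passes to the framed bordism class $[\xi]$ of a chosen lift -- so you have correctly located the one step that genuinely needs an argument; you just have to supply it.
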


\begin{Corollary} \label{B3.C1}
The $t$-invariant defines an injective homomorphism
\[ t_{S^{11}} \colon \Bun(S^{11}) \cong \pi_{10}(S^3) \longrightarrow \Z/15 \; .\]
\end{Corollary}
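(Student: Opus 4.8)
The plan is to specialise Theorem~\ref{B3.T1} to the case $k=3$ and then to read off the answer from the relevant stable and unstable homotopy groups of spheres. Since $S^{11}=S^{4\cdot 3-1}$ is a (standard) homotopy sphere and $k=3\ge 3$, Theorem~\ref{B3.T1} applies with $\Sigma=S^{11}$ and identifies $t_{S^{11}}$ with the composition
\[ -e\circ S\colon \Bun(S^{11})\cong\pi_{10}(S^3)\xrightarrow{\ S\ }\pi_7^S\xrightarrow{\ -e\ }\Q/\Z\;, \]
where $S$ is the stabilisation homomorphism and $e$ is the Adams $e$-invariant of~\cite{A}. In particular $t_{S^{11}}$ is a homomorphism, and it is injective if and only if $e\circ S$ is.

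Next I would bring in the two homotopy-theoretic inputs. By Toda's computations of the homotopy groups of spheres, $\pi_{10}(S^3)\cong\Z/15$. In the $7$-stem one has $\pi_7^S\cong\Z/240$, and this group is precisely the image of the $J$-homomorphism; moreover, by~\cite{A}, the $e$-invariant restricts to an isomorphism from $\pi_7^S$ onto the cyclic subgroup $\frac1{240}\Z/\Z\subset\Q/\Z$. Since $e$ is injective on $\pi_7^S$, the injectivity of $e\circ S$ is equivalent to the injectivity of~$S$.

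The heart of the matter, and the step I expect to be the main obstacle, is therefore to show that $S\colon\pi_{10}(S^3)\to\pi_7^S$ is injective, with image the unique subgroup of order $15$. Purely algebraically, any homomorphism $\Z/15\to\Z/240$ has image contained in the $15$-torsion subgroup, which is the order-$15$ subgroup generated by $16\in\Z/240$; so it only remains to verify that $S$ does not collapse either the $3$- or the $5$-primary part. At these odd primes the relevant suspension homomorphisms are isomorphisms in this range, so that the generators of $\pi_{10}(S^3)_{(3)}$ and $\pi_{10}(S^3)_{(5)}$ suspend to generators of $(\pi_7^S)_{(3)}$ and $(\pi_7^S)_{(5)}$ respectively; this is the homotopy-theoretic content that I would extract from Toda's tables. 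Granting it, $e\circ S$ carries $\pi_{10}(S^3)$ isomorphically onto $e\bigl(16\cdot\tfrac1{240}\Z/\Z\bigr)=\frac1{15}\Z/\Z\cong\Z/15$. Hence $t_{S^{11}}=-e\circ S$ is an injective homomorphism with image $\frac1{15}\Z/\Z\cong\Z/15$, which is the assertion of the Corollary.
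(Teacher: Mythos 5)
Your proposal is correct and follows essentially the same route as the paper: the paper's proof likewise reduces to Theorem~\ref{B3.T1} with $k=3$ and then cites Toda for the injectivity of the stabilisation $\pi_{10}(S^3)\to\pi_7^S$ and Adams for the injectivity of the $e$-invariant on that stem. Your additional prime-by-prime justification of the injectivity of $S$ is a reasonable unpacking of the same Toda reference, not a different argument.
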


\begin{proof}
In this case stabilisation is injective by~\cite[Ch.\,XIII]{T} and the $e$-invariant is injective on the $11$-stem by~\cite[Ex.\,7.17]{A}.
\end{proof}

\begin{proof}[Proof of Theorem~\ref{B3.T1}]
Recall that the boundary map associated to the Hopf fibration
is a homomorphism~$\del\colon\pi_{j+1}(S^4) \to \pi_{j}(S^3)$.
Because the fibre~$S^3$ is contractible in the total space~$S^7$,
the long exact homotopy sequence implies that~$\del$ is surjective.

The Hopf fibration extends to~$\H P^k$ for all~$k$.
From the diagram
\begin{equation*}
  \begin{CD}
    \pi_{j+1}(S^7)@>>>\pi_{j+1}(S^4)@>\del>>\pi_j(S^3)\\
    @VVV@VVV@|\\
    \pi_{j+1}(ES^3)@>>>\pi_{j+1}(BS^3)@>\del>\sim>\pi_j(S^3)
  \end{CD}
\end{equation*}
it follows that the homomorphism induced by the standard inclusion
\[\pi_{j+1}(S^4) \longrightarrow \pi_{j+1}(BS^3) \cong \pi_{j}(S^3)\]
is onto.  
So without loss of generality,
the quaternionic line bundle~$E\to S^{4k-1}$
is classified by a map~$\xi\colon S^{4k-1} \to S^4\subset BS^3$.
Note that~$E \cong \xi^*H$ where~$H$ is the tautological bundle over~$S^4$.

We give~$S^{4k-1}$ the standard framing,
and consider the class
	$$[\xi] \in\pi_{4k-5}^S \cong \Omega^{\rm fr}_{4k-5}
	\cong \Omega^{\rm fr}_{4k-1}(S^4)\;$$
	where~$\Omega^{\rm fr}_*$ denotes the framed bordism and we have applied the Pontrjagin-Thom isomorphism.  By the Pontrjagin-Thom construction,
this class~$[\xi]$ is represented by the quaternionic divisor~$Y=\xi^{-1}(x_0)$
for some regular value~$x_0\in S^4$ of~$\xi$.
In particular,
the manifold~$Y$ inherits a framing from~$\xi$.
Then by Proposition~\ref{B3.P3},
we have
	$$t_{S^{4k-1}}(E)=-e(Y)=-e([\xi])\;.\qedhere$$
\end{proof}

\begin{Remark}\label{B3.R3}
  Theorem~\ref{B3.T1} is void for~$k=0$ and~$1$ ,
  and it does not hold for~$k=2$.
  For the case~$k=2$ consider the sequence of maps
	$$\begin{CD}S^7@>H>> S^4@>F_c>>S^4@>>>\H P^\infty\end{CD}$$
  where the first map is the Hopf fibration~$H$ 
  and~$F_c$ is a self-map of degree~$c\in\Z$.
  Let~$E_c\to S^7$ be the pullback of the tautological bundle~$H_1 \to \H P^1$,
  then~$t_{S^7}(E_c)=\frac{c(c-1)}{24}$ by Example~\ref{D1.K2}.

  On the other hand, let~$x_0 \in S^4$ be a regular value of~$F_c$
  and let~$Y_c \subset S^7$ be its pre-image:
  $Y_c$ if a framed $3$-manifold which is a disjoint union
  of fibres of the Hopf fibration.
  It is well known that the map~$H$ represents a generator
  of the stable $3$-stem with~$e([H]) = \pm \frac{1}{24}$.  It follows that
  \[ e(Y_c) = \pm \frac{c}{24}\]
  and in particular~$t_{S^7}(E_c) \neq -e(Y_c)$ in general.
 
 The following observations view the difference between the $t$-invariant and the $e$-invariant from the homotopic point of view: there is an isomorphism~$\pi_7(S^4) \cong \Z[H] \oplus S\pi_6(S^3)$ where~$S\pi_6(S^3) \cong \Z/12$ is the stabilised group.  In these co-ordinates the stabilisation map~$\pi_7(S^4) \to \pi_8(S^5) = \pi_3^S$ is isomorphic to
 \[ \Z \oplus \Z/12 \longrightarrow \Z/24 \; , \quad (a, [b]) \longmapsto [a - 2b] \; .\]  
 Moreover, in this basis~$[F_c \circ H] = (c^2, [c(c-1)/2])$.  In particular~$[F_c \circ H]$ stabilises to~$c^2 - c^2 + c = c$ and we have the equation
 \begin{equation} \label{B3.Eq1}
 t_{S^7} = \pm \frac{\bar e(\bar e \pm \frac{1}{24})}{24} \colon \Bun(S^7) \cong \pi_6(S^3) \longrightarrow \Q/\Z
 \end{equation}
where~$\bar e = e \circ S \colon \pi_6(S^3) \to \pi_3^S \to \Q/\Z$.

We sketch a proof that~$e([H]) = \pm \frac{1}{24}$. The induced framing on a fibre of the Hopf fibration differs from the standard framing by the clutching function of the Hopf fibration itself: call this framed manifold~$(S^3, \pi_H)$.  Now apply the bordism definition of the the $e$-invariant: as a spin manifold~$(S^3, \pi_H)$ bounds over~$D^4$ as~$S^3$ has a unique spin structure.  We see that
 \[ e(S^3, \pi_H) = \frac{1}{2}\widehat A\bigl(TD;\pi_H\bigr)[D^4,S^3] = \frac12\widehat A(H)[S^4] = \pm \frac{1}{24} \]
 where we regard~$H$ as the Hopf bundle over~$S^4$
 with~$p(H)$ a generator of~$H^4(S^4)$ and~$\widehat A_1 = -\frac{1}{12} p$.
 Here, $\widehat A(TD;\pi_H)$ denotes the relative $\widehat A$-class with
 respect to the prescribed framing at the boundary.
  
  As pointed out in Remark~\ref{B3.R2},
  there are at least two different possible correction terms
  that construct a differential topological invariant out of the
  $\eta$-invariant of the untwisted Dirac operator on the divisor~$Y$.
  For~$k=2$ different choices of such correction terms lead to different invariants.
\end{Remark}

\subsection{A related example in dimension \texorpdfstring{$3$}{3}} \label{B4}
Let~$(M, \sigma)$ be a compact oriented 3-manifold~$M$ with $\spin^c$-structure.  DeLoup and Massuyeau~\cite[Definition 2.2]{DM} defined an invariant
\[ \phi_{M, \sigma} \colon H_1(M; \Q/\Z) \longrightarrow \Q/\Z \; .\]
In this subsection we give an analytic definition of~$\phi_{M, \sigma}$ using the Atiyah-Patodi-Singer $\xi$-invariant~\cite[II]{APS} which is a precise analogue of the $t$-invariant in dimension~$7$ if~$M$ is a rational homology sphere.


Let~$W$ be a handlebody with~$\partial W=M$.
Then~$W$ carries a unique spin structure,
which induces a spin structure on~$M$.
There is a one-to-one correspondence between  complex line bundles~$L$
and $\spin^c$-structures~$\sigma$ on~$M$,
given by twisting the fixed spin-structure above by~$L$.
The exact sequence of the pair~$(W,M)$ contains
$$0\longrightarrow H^1(M)\longrightarrow H^2(W,M)\longrightarrow H^2(W)\stackrel{}\longrightarrow H^2(M)\longrightarrow 0\;,$$
so~$L$ extends to a complex line bundle ~$\overline L$ on $W$
that induces a $\spin^c$-structure~$\bar \sigma$ on~$W$ extending~$\sigma$.
We also fix a connection~$\nabla^{L}$ on~$\overline L \to W$.
The Chern forms of these $\spin^c$-structures are given by
	$$c(\bar\sigma)=2c_1\bigl(\overline L,\nabla^{\overline L}\bigr)
	\qquad\text{and}\qquad c(\sigma)=c(\bar\sigma)|_M\;.$$

Let~$a\in H^1(M;\Q/\Z)$,
then we construct a flat complex line bundle~$E_a$ over~$M$
with holonomy given by
	$$a\in H^1(M;\Q/\Z)\cong\Hom\bigl(\pi_1(M),\mu^*\bigr)\;,$$
where~$\mu^*\subset\C$ denote the group of roots of unity
and the isomorphism is induced by the isomorphism~$\Q/\Z\to\mu^*$
sending~$q$ to~$e^{2\pi iq}$.
Let~$\widetilde M$ denote the universal covering of~$M$
and let~$\pi_1(M)$ act as the group of deck transformations.
The bundle~$E_a$ is given by
	$$\widetilde M\times_a\C\qquad\text{with}\qquad
	\bigl[x\gamma,e^{-2\pi i\,a(\gamma)}z\bigr]=[x,z]
        \qquad\text{for all }\gamma\in\pi_1(M)\;.$$
        We define~$\nabla^{E_a}$ to be the connection on~$E_a$ induced by the trivial connection
on~$\widetilde M\times\C$.  If~$\beta\colon H^1(M;\Q/\Z)\to H^2(M,\Z)$ denotes the Bockstein homomorphism,
then~$c_1(E)=\beta(a)$.

By the exact sequence above,
there exists a line bundle $\overline E$ on~$W$ that extends~$E$,
and we can choose an arbitrary connection~$\nabla^{}$ on~$\overline E\to W$,
that will not be flat in general,
but restricts to the given flat connection on~$E \to M$.
If~$\gamma=\partial\Sigma$ is a curve in~$M$,
where~$\Sigma\subset W$ is a closed immersed surface,
then
	$$a(\gamma)
	=c_1(\overline E,\nabla^{\overline E})[\Sigma,\partial\Sigma] \mod\Z\;.$$
This shows in particular that the first Chern
class
	$$c_1(\overline E)=\bigl[c_1\bigl(\overline E,\nabla^{\overline E}\bigr)\bigr]
	\in H^2(W,M;\R)$$
lifts the image of~$a$ in~$H^2(W,M;\Q/\Z)$.
This lift is well-defined only up to an integral class.

We compute the Atiyah-Patodi-Singer $\xi$-invariant of~$E$
with respect to the fixed $\spin^c$-structure~$\sigma$ on~$M$
and obtain
\begin{align*}
  \xi_{M,\sigma}(E_a, \nabla^{E_a})
  &=\frac{\eta+h}2\bigl(D_M^{L\otimes E_a}\bigr)-\frac{\eta+h}2\bigl(D_M^{L}\bigr)\\
  &=\int_W\widehat A\bigl(TW,\nabla^{TW}\bigr)
	\,\ch\bigl(\overline L,\nabla^{\overline L}\bigr)
	\,\Bigl(\ch\bigl(\overline E,\nabla^{\overline E}\bigr)-1\Bigr)\\
  &=\int_W\biggl(1+\frac{c(\bar\sigma)}2\biggr)
	\,\biggl(c_1\bigl(\overline E,\nabla^{\overline E}\bigr)
		+\frac12\,c_1\bigl(\overline E,\nabla^{\overline E}\bigr)^2\biggr)\\
  &=\frac{c_1(\overline E)\,\bigl(c(\bar\sigma)+c_1(\overline E)\bigr)}2\,[W,M]
	\quad\in\R/\Z\;.
\end{align*}
Note that the last term is well-defined because~$c_1(\overline E)$ is well defined
as a relative class modulo~$\Z$.
Comparing the above formula with~\cite[Lemma 2.6]{DM} we obtain

\begin{Proposition}
Let~$(M, \sigma)$ be a closed $\spin^c$-manifold and let~$\wp$ denote Poincar\'e duality.  For all~$a \in H^1(M; \Q/\Z)$
	$$\phi_{M,\sigma}\bigl(\pm\wp\,a\bigr)
	=\pm\xi_{M,\sigma}(E_a,\nabla^{E_a})\,.$$
\end{Proposition}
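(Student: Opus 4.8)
The plan is to recognise that the substantive analytic input has already been supplied by the displayed computation above, which evaluates
\[
  \xi_{M,\sigma}(E_a,\nabla^{E_a})
  =\frac{c_1(\overline E)\,\bigl(c(\bar\sigma)+c_1(\overline E)\bigr)}2\,[W,M]
  \quad\in\quad\R/\Z,
\]
where~$(W,\bar\sigma)$ is the $\spin^c$-coboundary constructed above and~$c_1(\overline E)\in H^2(W,M;\R)$ is the relative Chern class lifting the image of~$a$. All that remains is the purely topological task of matching this quadratic expression with the formula for~$\phi_{M,\sigma}$ recorded in~\cite[Lemma 2.6]{DM}, so I would treat the proof as a comparison of two closed formulas rather than a fresh computation.

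First I would recall the defining data of the DeLoup--Massuyeau invariant. Their~$\phi_{M,\sigma}$ is also computed from a $\spin^c$-coboundary of~$(M,\sigma)$: for a class in~$H_1(M;\Q/\Z)$ one chooses a relative homological lift into~$H_2(W,M;\Q)$ and evaluates a quadratic expression built from the self-intersection of this lift and its intersection with the characteristic element determined by~$\bar\sigma$. Setting the homological input equal to~$\wp\,a$, I would verify that the map~$H^1(M;\Q/\Z)\to H^2(W,M;\Q/\Z)$ used above to produce~$c_1(\overline E)$ is, under Poincar\'e--Lefschetz duality for the pair~$(W,M)$, exactly dual to the homological lift appearing in~\cite[Lemma 2.6]{DM}. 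Granting this, the isomorphisms~$H^2(W,M;\R)\cong H_2(W;\R)$ and~$H^2(W;\R)\cong H_2(W,M;\R)$ intertwine the cup-product evaluation~$\alpha\smile\beta\,[W,M]$ with the intersection pairing on~$W$, and carry~$c(\bar\sigma)=2c_1(\overline L,\nabla^{\overline L})$ to the characteristic element of~\cite{DM}; the two quadratic expressions then coincide term by term.

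The main obstacle, and the source of the~$\pm$ in the statement, is the reconciliation of orientation and sign conventions between the analytic/cohomological framework used here and the homological framework of~\cite{DM}. The duality~$\wp$ and the orientation of~$W$ each enter only up to a sign fixed by conventions, so I would expect to be unable to pin down an absolute equality; instead a single coherent choice of sign must be made that simultaneously identifies~$\wp\,a$ with the homological input and the two quadratic refinements, which is precisely what yields the symmetric identity~$\phi_{M,\sigma}(\pm\wp\,a)=\pm\xi_{M,\sigma}(E_a,\nabla^{E_a})$. A final routine check, already guaranteed by the intrinsic definition of the $\xi$-invariant, is that the integral indeterminacy in~$c_1(\overline E)$ leaves both sides well defined in~$\Q/\Z$.
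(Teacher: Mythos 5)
Your proposal is correct and follows essentially the same route as the paper: the paper's entire argument consists of the displayed Atiyah--Patodi--Singer computation reducing $\xi_{M,\sigma}(E_a,\nabla^{E_a})$ to the quadratic expression $\tfrac12\,c_1(\overline E)\bigl(c(\bar\sigma)+c_1(\overline E)\bigr)[W,M]$, followed by the one-line remark that comparison with~\cite[Lemma 2.6]{DM} (via Poincar\'e duality, up to the sign ambiguity recorded by the~$\pm$) yields the Proposition. Your elaboration of what that comparison entails is consistent with, and somewhat more explicit than, what the paper actually writes.
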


\section{The \texorpdfstring{$t$}{t}-invariant of \texorpdfstring{$7$}{7}-manifolds}\label{C}

 
In this section we investigate the~$t$-invariant in dimension $7$: in~\ref{C1} we show that the $t$-invariant classifies closed smooth $2$-connected rational homology $7$-spheres up to connected sum with homotopy spheres.  In~\ref{C2} we show that the $t$-invariant detects homeomorphisms which are not homotopic to PL homeomorphisms on $2$-connected rational homology $7$-spheres.  In Sections~\ref{C3} we relate the $t$-invariant to the $s$-invariants of Kreck and Stolz~\cite{KS} and their generalisations in Hepworth~\cite{Hep}.

\subsection{Classification results for 2-connected 7-manifolds} \label{C1}
Throughout this subsection~$M$ shall be a closed smooth oriented $2$-connected $7$-manifold.  In addition we assume that~$M$ is a rational homology sphere which is equivalent to assuming that~$\pi_3(M) \cong H_3(M) \cong H^4(M)$ are finite groups.  

We first recall the classification of such $2$-connected rational homology spheres started in~\cite{Wi} and completed in~\cite{Crow}.  We then relate the $t$-invariant from Section~\ref{B} to this classification to obtain a classification theorem for such manifolds~$M$ using~$t_M$ and~$\mu(M)$, the Eells-Kuiper invariant of~$M$ which we recall below.

Recall that~$\Theta_7$ denotes the group of diffeomorphism classes of oriented homotopy 7-spheres and that~$\Theta_7 \cong \Z/28$.  The homotopy 7-spheres can be detected by the Eells-Kuiper invariant~$\ek(\Sigma) \in \Q/\Z$ which by~\cite[\S 6]{EK} defines an injective homomorphism
\[ \ek \colon \Theta_7 \longrightarrow \Q/\Z \; .\]
Moreover the definition of~$\ek$ can be extended to any 2-connected rational homotopy 7-sphere~$M$ to give~$\ek(M) \in \Q/\Z$ with
\begin{equation} \label{C1.Eq1}
\ek(M \conn \Sigma) = \ek(M) + \ek(\Sigma) \in \Q/\Z \; .
\end{equation} 
The definition of~$\mu(M)$ in this case is by now routine: from the analytic point of view it can be found in~\cite{Do1, KS}; using coboundaries the details are in~\cite[2.12]{Crow} where~$\mu$ is called~$s_1$.  With either definition the additivity in~\eqref{C1.Eq1} is clear.

This gives an effective strategy for classifying 2-connected rational homology 7-spheres.  Firstly consider these manifolds up to {\em almost diffeomorphism} where an almost diffeomorphism is a homotopy sphere~$\Sigma$ and a diffeomorphism~$f \colon N \conn \Sigma \cong M$.  If~$M$ and~$N$ are almost diffeomorphic then they are diffeomorphic if and only if~$\mu(M) - \mu(N) = \mu(\Sigma) = 0$.

We turn to consider almost diffeomorphism invariants of~$M$.  Since~$M$ is $2$-connected it possesses a unique equivalence class of spin structures.  An important invariant is the first characteristic class of spin manifolds which generates~$H^4(B\Spin) \cong \Z$.  We choose the generator~$p \in H^4(B\Spin)$ so that that~$2 p = p_1$ where~$p_1$ is the first Pontrjagin class.  The class~$p$ is often called ``half the first Pontrjagin class'' and is sometimes denoted~$\frac{p_1}{2}$.  For any spin manifold~$X$, we have~$p_X=p(TX) \in H^4(X)$.

An important fact about closed smooth $2$-connected $7$-manifolds proven in~\cite[Theorem 4]{Wi} is that every one is the boundary of a handlebody~$W$: i.e.~$W$ is a 3-connected 8-manifold obtained from~$D^8$ by attaching 4-handles, and the boundary of~$W$ is identified with~$M$.
We consider the exact sequence
\begin{equation}\label{C1.0}
    0\longrightarrow  H^4(W, M)\longrightarrow  H^4(W)\stackrel{i^*}\longrightarrow H^4(M)\longrightarrow  0\;,
\end{equation}
which 
allows one to define the following quadratic linking function
\begin{equation}\label{C1.1}
  q_M\colon H^4(M) \longrightarrow \Q/\Z \;, \qquad x
  \longmapsto \frac{1}{2}\bigl((\bar x + p_W) \smile \hat x\bigr)[W,M] \;,
\end{equation}
where~$\bar x \in H^4(W)$ maps to~$x\in H^4(M)$
and~$\hat x \in H^4(W, M; \Q)$ maps to~$\bar x$ under the isomorphism~$H^4(W, M; \Q) \cong H^4(W; \Q)$.  

\begin{Lemma}[c.f.\,{\cite[Lemma 2.51]{Crow}}]
The function~$q_M$ is a well-defined almost diffeomorphism invariant of~$M$.
\end{Lemma}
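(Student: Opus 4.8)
The plan is to verify two separate assertions hidden in the phrase ``well-defined almost diffeomorphism invariant'': first, that for a fixed handlebody $W$ with $\partial W = M$, the formula in~\eqref{C1.1} does not depend on the choices of lift $\bar x$ and of the rational lift $\hat x$; and second, that the resulting value $q_M(x)$ is independent of the choice of handlebody $W$ bounding $M$, and moreover is unchanged when $M$ is replaced by $M \conn \Sigma$ for a homotopy sphere $\Sigma$.

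For the first assertion I would argue as follows. The exact sequence~\eqref{C1.0} shows that $\bar x$ is determined by $x$ up to an element of $H^4(W,M)$, which is the image of the injection $H^4(W,M) \to H^4(W)$. Over $\Q$ the map $H^4(W,M;\Q) \to H^4(W;\Q)$ is an isomorphism since $H^4(M;\Q) = 0$ (the $H_3$, $H^4$ of $M$ are finite), so the rational lift $\hat x$ is in fact \emph{uniquely} determined by $\bar x$; thus the only genuine ambiguity is the choice of $\bar x$. Changing $\bar x$ by a class $\delta \in H^4(W,M)$ changes $\hat x$ by the corresponding integral relative class. I would then expand $\frac12\bigl((\bar x + \delta + p_W)\smile(\hat x + \delta)\bigr)[W,M]$ and compare with $\frac12\bigl((\bar x + p_W)\smile \hat x\bigr)[W,M]$. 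The key point is that the intersection form on a handlebody is unimodular (since $W$ is built from $D^8$ by attaching $4$-handles, $H^4(W)$ and $H^4(W,M)$ are free and pair perfectly), and that $p_W$ satisfies the appropriate characteristic/Wu-type congruence making the cross terms and the quadratic correction integral; hence the change lands in $\Z$ and $q_M(x)$ is unchanged in $\Q/\Z$. This is the place where the factor $\frac12$ and the shift by $p_W$ are essential, and it is the main computational step.

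For the second assertion, invariance under the choice of $W$ is the standard closed-manifold-defect argument. Given two handlebodies $W_0, W_1$ with $\partial W_i = M$, I would form the closed spin $8$-manifold $X = W_0 \cup_M (-W_1)$. A class $x \in H^4(M)$ together with compatible lifts glues to a class on $X$, and the difference $q_M^{W_0}(x) - q_M^{W_1}(x)$ is computed by evaluating $\frac12(\bar x + p_X)\smile \bar x$ on the closed manifold $X$; by the signature/index theorem for closed spin $8$-manifolds this evaluation is an integer (this is again a unimodularity-plus-characteristic-class statement, now on a closed manifold, and is exactly the mechanism that makes the Kreck--Stolz type defect well-defined modulo $\Z$). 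Invariance under $M \mapsto M \conn \Sigma$ follows because a homotopy $7$-sphere $\Sigma$ bounds a parallelisable handlebody $W_\Sigma$ with $H^4(W_\Sigma, \Sigma) = 0$, so one may take $W \conn W_\Sigma$ as a handlebody for $M \conn \Sigma$ without altering the relevant cohomology or the value of the pairing.

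The hard part will be the first step: carefully pinning down the integrality of the change under $\bar x \mapsto \bar x + \delta$. This requires knowing that $p_W$ reduces mod $2$ to the relevant characteristic element of the intersection form so that $(\bar x + p_W)\smile \delta + \tfrac12\,\delta\smile\delta$ is an integer, and handling the mixed term $\bar x \smile \delta$ via unimodularity. I would lean on the explicit handlebody structure from~\cite{Wi} and the comparison with the treatment already given in~\cite[Lemma 2.51]{Crow}, which this lemma cites; indeed the cleanest route is to match definitions with~\cite{Crow} and invoke that reference for the integrality congruences, then record the almost-diffeomorphism invariance as the mild additional observation about connect-summing with $W_\Sigma$.
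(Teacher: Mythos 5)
Your overall strategy is the right one, and it is essentially the argument the paper itself relies on: the paper gives no proof of this lemma but defers to~\cite[Lemma 2.51]{Crow}, and your two-step decomposition (independence of the lifts for a fixed handlebody $W$, then independence of $W$ by gluing two coboundaries to a closed spin $8$-manifold, then invariance under connected sum with homotopy spheres) is the standard route. You have also correctly located the computational core: after replacing $\bar x$ by $\bar x+\delta$ with $\delta$ in the image of $H^4(W,M)\to H^4(W)$, the two cross terms each evaluate to $(\bar x\smile\delta)[W,M]\in\Z$ and hence contribute $2(\bar x\smile\delta)[W,M]$, so the factor $\frac12$ is harmless there, while the remaining term $\frac12\bigl((\delta+p_W)\smile\delta\bigr)[W,M]$ is an integer because $p_W$ reduces mod~$2$ to $w_4(W)=v_4(W)$, i.e.\ it is a characteristic element of the intersection form; the same Wu-type congruence on the closed spin $8$-manifold $W_0\cup_M(-W_1)$ disposes of the change of coboundary. (Two cosmetic points: unimodularity is not actually needed for the cross terms, integrality of the pairing suffices; and the closed-manifold step is the Wu formula for spin manifolds rather than the index theorem, though either can be made to work.)

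The one genuine error is in your last step: it is \emph{not} true that a homotopy $7$-sphere $\Sigma$ bounds a parallelisable handlebody $W_\Sigma$ with $H^4(W_\Sigma,\Sigma)=0$. Since $\Theta_7=bP_8\cong\Z/28$, a nonstandard $\Sigma$ bounds no contractible smooth manifold, and the standard parallelisable coboundaries are plumbings with $H^4(W_\Sigma,\Sigma)$ free of large rank (rank $8$ for the $E_8$-plumbing). Fortunately you need much less: take any parallelisable handlebody $W_\Sigma$ with $\del W_\Sigma=\Sigma$ and form $W\bconn W_\Sigma$, a handlebody bounding $M\conn\Sigma$. Because $H^4(W_\Sigma)\to H^4(\Sigma)=0$ is zero, every class $x\in H^4(M\conn\Sigma)\cong H^4(M)$ admits a lift of the form $(\bar x,0)\in H^4(W)\oplus H^4(W_\Sigma)$, and since $p_{W_\Sigma}=0$ by parallelisability the extra summand contributes nothing to the evaluation, whence $q_{M\conn\Sigma}(x)=q_M(x)$. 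With that repair your argument is complete and agrees with~\cite{Crow}.
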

We shall say that~$q_M$ and~$q_N$ are isomorphic if there is an isomorphism~$A \colon H^4(M) \cong H^4(N)$ such that if~$q_M = q_N \circ A$.  The following is also proven in~\cite{Crow}.
\begin{Theorem}[c.f.\,{\cite[Theorem A]{Crow}}]\label{C1.T1}  
Let~$N$ and~$M$ be 2-connected rational homology 7-spheres.
\begin{enumerate}
\item \label{C1.T1.1} There is an almost diffeomorphism~$f\colon N \conn \Sigma \cong M$ with induced map~$f^* = A \colon H^4(M) \cong H^4(N)$ if and only if~$q_M = q_N \circ A$.
\item \label{C1.T1.2} $M$ is diffeomorphic to~$N$ if and only if~$q_M$ is isomorphic to~$q_N$ and~$\ek(M) = \ek(N)$.
\end{enumerate}
\end{Theorem}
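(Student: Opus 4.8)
The plan is to build everything on the coboundary handlebodies supplied by Wilkens together with an algebraic classification of their intersection forms, and to reduce part~\eqref{C1.T1.2} to part~\eqref{C1.T1.1} at the outset. In~\eqref{C1.T1.2} the forward implication is immediate: a diffeomorphism is an almost-diffeomorphism, so $q_M$ is isomorphic to $q_N$ by the invariance Lemma above, while $\mu$ is visibly a diffeomorphism invariant. Conversely, suppose $q_M$ is isomorphic to $q_N$ and $\mu(M)=\mu(N)$. Part~\eqref{C1.T1.1} then furnishes a homotopy sphere $\Sigma$ and an almost-diffeomorphism $f\colon N\conn\Sigma\cong M$, and additivity~\eqref{C1.Eq1} gives $\mu(\Sigma)=\mu(M)-\mu(N)=0$; since $\mu$ is injective on $\Theta_7\cong\Z/28$ we get $\Sigma\cong S^7$, so $f$ is a genuine diffeomorphism. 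Thus the whole theorem rests on part~\eqref{C1.T1.1}, whose forward direction is precisely the invariance Lemma.

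For the substantial backward direction of~\eqref{C1.T1.1} I would pass to handlebodies. By \cite[Theorem~4]{Wi} each of $M$ and $N$ bounds a handlebody $W$, a $3$-connected $8$-manifold built from $D^8$ by attaching $4$-handles, and such a $W$ is classified up to diffeomorphism by its \emph{Wilkens datum} $(H_4(W),\lambda,p_W)$: the free group $H_4(W)\cong\Z^r$, the symmetric intersection form $\lambda$, and the spin characteristic class $p_W\in H^4(W)$ recording the normal bundles of the cores. Through the exact sequence~\eqref{C1.0} this datum computes the linking form on the finite group $H^4(M)\cong\operatorname{coker}\lambda$ and, via formula~\eqref{C1.1}, its quadratic refinement $q_M$; note that it is the summand $+\,p_W$ that distinguishes $q_M$ from the bare self-linking form. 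The goal is to show that the isomorphism class of $(H^4(M),q_M)$ is a \emph{complete} almost-diffeomorphism invariant, i.e.\ that $q_M=q_N\circ A$ forces $N\conn\Sigma\cong M$ for some $\Sigma$ with $f^*=A$.

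The core would be algebraic, followed by a geometric realisation. I would prove that two Wilkens data presenting isomorphic pairs $(H^4(\punkt),q)$ become isomorphic after \emph{stabilisation} by standard summands of two types: punctured copies of $S^4\times S^4$, which add hyperbolic handle pairs, carry $p=0$, and leave the boundary equal to $S^7$; and $D^4$-bundles over $S^4$ (as in Milnor's exotic spheres), which adjust $(\lambda,p_W)$ on a rank-one summand and change the boundary only by connected sum with a homotopy $7$-sphere. Geometrically these stabilisations are boundary connected sums $\bconn$, so after replacing $W_N$ by some $W_N'=W_N\bconn(\cdots)$ with $\partial W_N'=N\conn\Sigma$, the stabilised data of $W_M$ and $W_N'$ agree; Wilkens' handlebody classification then upgrades this to a diffeomorphism $W_M\cong W_N'$, whose restriction to boundaries is exactly the desired almost-diffeomorphism $M\cong N\conn\Sigma$ inducing $A$ on $H^4$.

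The main obstacle I anticipate is precisely this completeness-plus-realisation step: controlling which stabilisations are needed to pass from an abstract isomorphism of refined linking data to an isomorphism of Wilkens data, matching the spin class $p_W$ (where $q_M$, rather than the linking form alone, is indispensable and where Wilkens' original classification fell short), and verifying that the residual ambiguity of the construction is exactly one element of $\Theta_7$ — so that the classification is genuinely only up to almost-diffeomorphism. Once that is established, the homotopy sphere $\Sigma$ produced by the $D^4$-bundle summands is recognised through its Eells--Kuiper invariant, and the diffeomorphism refinement of~\eqref{C1.T1.2} is the bookkeeping already carried out in the first paragraph.
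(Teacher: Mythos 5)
First, a point of comparison: the paper offers no proof of this statement --- it is imported directly from \cite[Theorem A]{Crow} (note the ``c.f.''), so the only benchmark is Crowley's argument there. Your reduction of part~\eqref{C1.T1.2} to part~\eqref{C1.T1.1} is correct and complete: the forward implications follow from the almost-diffeomorphism invariance of $q_M$ and the additivity~\eqref{C1.Eq1}, and the converse uses exactly the injectivity of $\ek\colon\Theta_7\to\Q/\Z$ from \cite{EK} to force $\Sigma\cong S^7$. This matches the logic of \cite{Crow}.

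For the backward direction of part~\eqref{C1.T1.1}, your strategy --- present $M$ and $N$ by handlebodies, record the data $(H_4(W),\lambda,p_W)$, and show that presentations of isomorphic quadratic linking functions become isomorphic after stabilisation by unimodular summands whose boundaries are homotopy spheres --- is indeed the route taken in \cite{Crow}. But the decisive step, which you yourself flag as ``the main obstacle'', is only named, not carried out: one needs (a) a cancellation/stabilisation theorem in the style of Wall and Nikulin asserting that the boundary pair $\bigl(\operatorname{coker}\lambda,\,q\bigr)$ determines $(H_4(W),\lambda,p_W)$ up to adding unimodular forms with characteristic element, and (b) the fact that a \emph{prescribed} isomorphism $A$ of the boundary data lifts to an isomorphism of stabilised presentations, so that the resulting boundary diffeomorphism induces $A$ and not merely some isomorphism. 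This algebra is the substance of \cite[Theorem A]{Crow}; without it the proposal is a correct plan rather than a proof. Two smaller corrections: the classification of $3$-connected $8$-dimensional handlebodies by $(H_4(W),\lambda,p_W)$ is due to Wall, not Wilkens --- \cite[Theorem 4]{Wi} supplies only the existence of the coboundary, and Wilkens' own boundary classification was incomplete precisely at the $2$-torsion of $H^4(M)$, which is where $q_M$ (rather than the linking form together with $p_M$) earns its keep; and for part~\eqref{C1.T1.1} there is no need to ``verify that the residual ambiguity is exactly one element of $\Theta_7$'' --- any homotopy sphere produced by the stabilisation is acceptable, since the conclusion is only an almost-diffeomorphism.
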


\begin{Remark}
Recall the linking form of~$M$
\[  b_M \colon H^4(M) \times H^4(M) \longrightarrow \Q/\Z \; . \]
which is a non-singular symmetric pairing.
In the notation of~\eqref{C1.1}, we have
\[ b_M(x, y) = (\bar x \smile \hat y)[W,M] = (\hat x \smile\bar y) [W,M] \; .\]
From~\eqref{C1.1} we see that~$q_M$ refines~$b_M$ in the following sense:
\begin{equation}\label{C1.3}
  q_M(x + y) = q_M(x) + q_M(y) + b_M(x, y) \quad \forall \,x, y \in H^4(M) \; .
\end{equation}
We also see that~$q_M$ need not be homogeneous; i.e.~$q_M(x) \neq q_M(-x)$ in general.  However, the homogeneity defect of~$q$ is determined by~$p_M$:
\[ q_M(x) - q_M(-x) = (p_W \smile \hat x)[W,M] = b_M(p_M, x) \; .\]
\end{Remark}


Recall now that~$\Bun(M)$ denotes the set of isomorphism classes of principal $S^3$-bundles over~$M$. 
%
%
We have an action of~$\Bun(S^7) \cong \pi_6(S^3) \cong \Z_{12}$ on~$\Bun(M)$ by
\[ \Bun(M) \times \Bun(S^7) \longrightarrow \Bun(M) \; , \quad (E, F) \longmapsto E \conn F \]
which is defined by cutting and re-gluing a given principal $S^3$-bundle~$E$ over~$M$ along~$S^6 = \del D^7 \subset M$ using the clutching function of~$F$.
Next we relate~$q_M$ to the $t$-invariant~$t_M$ of Definition~\ref{B2.D1}.  

\begin{Theorem}\label{C1.T2}
Let~$M$ be a $2$-connected rational homology 7-sphere.  
\begin{enumerate}
\item \label{C1.T2.1}
For~$M = S^7$, the $t$-invariant defines an injective homomorphism
\[ t_{S^7} \colon \Bun(S^7) \cong \Z/12 \subset \Q/\Z \; .\]
\item \label{C1.T2.2}
The group\/~$\Bun(S^7)$ acts freely on\/~$\Bun(M)$ with
$$\begin{CD}\Bun(M)/\Bun(S^7)@>c_2>\cong>H^4(M)\;,\end{CD}$$
%
%
and for all~$(E, F) \in \Bun(M) \times \Bun(S^7)$ we have
\[ t_M(E \conn F) = t_M(E) + t_{S^7}(F)\; .\]
\item\label{C1.T2.3}
The $t$-invariant determines~$q_M$: for all~$E \in \Bun(M)$
\[ 
q_M(c_2(E))= 12\,t_M(E) \in \Q/\Z \; .\]
\end{enumerate}	
\end{Theorem}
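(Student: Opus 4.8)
\textbf{Proof plan for Theorem \ref{C1.T2}.}

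The plan is to prove the three parts by choosing a convenient zero-bordism and computing both sides of each identity via the extrinsic formula~\eqref{B1.D1} together with Proposition~\ref{B2.P1}. Throughout I would use Wilkens' theorem that every $2$-connected $7$-manifold~$M$ bounds a handlebody~$W$, so that every $E \in \Bun(M)$ extends (after at most doubling, by the finiteness argument in the proof of Proposition~\ref{B2.P1}) to some $\overline E \to W$. For part~\eqref{C1.T2.1}, I would specialize to $M = S^7$ and $W = D^8$, where $H^4(D^8, S^7; \Q) = 0$ forces $\bar c_2(\overline E) = 0$ in that degree; hence the only contribution to $\tau_{S^7}(E)$ comes from the lower-order term in $\ch'(\overline E) = 1 - \tfrac{1}{12} c_2(\overline E) + \cdots$, and I would compute $t_{S^7}$ explicitly on a generator. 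The group structure on $\Bun(S^7) \cong \pi_6(S^3) \cong \Z/12$ comes from connected sum, and additivity~\eqref{B3.P1.2} already shows $t_{S^7}$ is a homomorphism; injectivity then follows once the value on a generator is shown to have order $12$ in $\Q/\Z$. I expect Theorem~\ref{Z.T3}/\ref{B3.T1} relating $t_{S^{4k-1}}$ to the Adams $e$-invariant is not yet available here (it needs $k \ge 3$), so the value must be computed directly, most likely via the clutching construction as in Remark~\ref{B3.R3}.

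For part~\eqref{C1.T2.2}, freeness of the $\Bun(S^7)$-action and the bijection $\Bun(M)/\Bun(S^7) \xrightarrow{c_2} H^4(M)$ are topological statements about $[M, BS^3]$; I would obtain these from obstruction theory, using the cell structure of $M$ (a $2$-connected rational homology $7$-sphere has cells only in dimensions $0, 4, 7$ rationally, and the relevant obstruction groups are $H^4(M; \pi_4(BS^3))$ and $H^7(M; \pi_7(BS^3))$ with $\pi_7(BS^3) \cong \pi_6(S^3) \cong \Z/12$). The additivity formula $t_M(E \conn F) = t_M(E) + t_{S^7}(F)$ is precisely the connected-sum additivity of Proposition~\ref{B3.P1}\eqref{B3.P1.2}, applied with $M_0 = M$, $M_1 = S^7$ and noting $M \conn S^7 = M$; combined with part~\eqref{C1.T2.1} this shows the $\Bun(S^7)$-orbit of $E$ meets every value of $t_M$ in its coset, which together with the $c_2$-bijection identifies the action as free.

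For part~\eqref{C1.T2.3}, the strategy is to compare the two cohomological expressions on the same handlebody $W$: the defining formula~\eqref{C1.1} for $q_M(c_2(E))$ and the extrinsic formula~\eqref{B1.D1} for $t_M(E)$. Writing $x = c_2(E)$, $\bar x = c_2(\overline E)$, and using $\ch'(\overline E) = 1 - \tfrac{1}{12} c_2(\overline E) + \cdots$ from~\eqref{B1.2a}, the integrand $\widehat A(TW)\, \ch'(\overline E)\, \bar c_2(\overline E)$ expands into a term $\bar c_2(\overline E)$ in degree $4$ paired against the $\widehat A$-genus contribution $\widehat A_1(TW) = -\tfrac{1}{12} p_W$ in degree $4$ (since $\dim W = 8$ and $\widehat A_0 = 1$), plus the term $-\tfrac{1}{12}\, c_2(\overline E)\, \bar c_2(\overline E)$ coming from $\ch'$. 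Collecting these, the factor $\tfrac{-1}{a_{k+1}} = \tfrac{-1}{2}$ (as $k=2$, so $k+1=3$ is odd) should produce exactly $\tfrac{1}{24}\bigl((\bar x + p_W) \smile \hat x\bigr)[W,M]$, and multiplying by $12$ recovers the factor $\tfrac{1}{2}$ in~\eqref{C1.1}. The main obstacle I anticipate is bookkeeping the rational lift: one must check that $\bar c_2(\overline E) \in H^4(W,M;\Q)$ agrees with the class $\hat x$ used in~\eqref{C1.1} and that the $\widehat A$-genus normalization $\widehat A_1 = -\tfrac{1}{12} p$ threads through to give precisely the coefficient $12$ rather than some other multiple. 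Getting the constants and the $a_{k+1}$-factor exactly right, so that the identity holds in $\Q/\Z$ and not merely up to a rational multiple, is the delicate point.
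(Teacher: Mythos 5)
Your treatment of parts~\eqref{C1.T2.2} and~\eqref{C1.T2.3} matches the paper's proof in all essentials: the paper also obtains surjectivity of $c_2$ from a handlebody coboundary $W$ (which is homotopy equivalent to a wedge of $4$-spheres, so $c_2\colon\Bun(W)\to H^4(W)$ is bijective), identifies the fibres of $c_2$ with $\Bun(S^7)$-orbits by restricting to $M^\bullet=M-{\rm int}(D^7)$, gets the additivity formula from Proposition~\ref{B3.P1}~\eqref{B3.P1.2}, and deduces freeness of the action from injectivity of $t_{S^7}$; its proof of~\eqref{C1.T2.3} is exactly your degree count, giving $t_M(i^*E)=\tfrac1{24}\bigl(\hat c_2(\overline E)\smile(p_W+c_2(\overline E))\bigr)[W,M]$, and your worry about the two rational lifts is harmless since both are the unique lift guaranteed by $H^3(M;\Q)=0$.

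The genuine gap is in part~\eqref{C1.T2.1}. You propose to compute $t_{S^7}$ on a generator using the coboundary $W=D^8$, but a nontrivial quaternionic line bundle on $S^7$ does not extend over $D^8$: $\Bun(D^8)=[D^8,BS^3]$ is a single point, so the only $E\in\Bun(S^7)$ bounding over the disc is the trivial one. (Consistently with this, if $\bar c_2(\overline E)=0$ then the entire integrand $\widehat A(TW)\,\ch'(\overline E)\,\bar c_2(\overline E)$ vanishes --- there is no surviving ``lower-order term'', because $\bar c_2(\overline E)$ multiplies everything --- so your computation could only ever return $t_{S^7}(\eps)=0$.) To evaluate $t_{S^7}$ on a generator one must choose a coboundary over which that generator actually extends; the paper takes the disc bundle of an $S^3$-bundle over $S^4$ with Euler number~$1$, i.e.\ Proposition~\ref{S4.T1} with $(n,p,k)=(1,2,1)$, equivalently the bundle $p_2^*E_2$ of Example~\ref{D1.K2}, and obtains the value $\tfrac1{12}$, which has order $12$ in $\Q/\Z$ and so gives injectivity via the additivity you correctly invoke. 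Your fallback remark about the clutching construction of Remark~\ref{B3.R3} points at the right computation, but note that it too is carried out over the disc bundle $W_2\to\H P^1$ as in Theorem~\ref{D1.T1}, not over $D^8$; as written, your primary plan for~\eqref{C1.T2.1} cannot produce a nonzero value.
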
 
\noindent
As an immediate consequence of Theorem~\ref{C1.T1}~\eqref{C1.T1.2} and Theorem~\ref{C1.T2}~\eqref{C1.T2.3} we have 

\begin{Corollary} \label{C1.C1}
Let~$N$ and~$M$ be $2$-connected rational homology $7$-spheres.  Then~$N$ is diffeomorphic to~$M$ if and only if~$t_N$ is isomorphic to~$t_M$ and~$\mu(N) = \mu(M)$.
\end{Corollary}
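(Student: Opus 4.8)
The plan is to derive the corollary by combining the two classification ingredients already in hand: Theorem~\ref{C1.T1}~\eqref{C1.T1.2}, which characterises diffeomorphism of $2$-connected rational homology $7$-spheres through the pair $(q_M,\mu(M))$, and Theorem~\ref{C1.T2}~\eqref{C1.T2.3}, which expresses $q_M$ in terms of $t_M$ via $q_M(c_2(E))=12\,t_M(E)$. Since $\ek=\mu$, the Eells--Kuiper condition is literally the same on both sides, so the entire content reduces to translating ``$t_N$ is isomorphic to $t_M$'' into ``$q_M$ is isomorphic to $q_N$'' in the senses of Definition~\ref{B2.D1} and of the paragraph before Theorem~\ref{C1.T1}.

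For the forward direction I would start from a diffeomorphism $f\colon N\cong M$. Naturality of the second Chern class together with naturality of the $t$-invariant under diffeomorphisms (Proposition~\ref{B3.P1}~\eqref{B3.P1.3}, applied with the standard sphere) shows that $A:=f^*\colon H^4(M)\to H^4(N)$ and $B:=f^*\colon\Bun(M)\to\Bun(N)$ satisfy $c_2(BE)=A\,c_2(E)$ and $t_N(BE)=t_M(E)$; this is precisely the assertion that $t_N$ is isomorphic to $t_M$. That $\mu$ is a diffeomorphism invariant gives $\mu(N)=\mu(M)$.

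For the converse I would assume an isomorphism of $t$-invariants, that is a group isomorphism $A\colon H^4(M)\to H^4(N)$ and a bijection $B\colon\Bun(M)\to\Bun(N)$ with $c_2(BE)=A\,c_2(E)$ and $t_N(BE)=t_M(E)$, together with $\mu(N)=\mu(M)$. Using that $c_2$ is onto (Theorem~\ref{C1.T2}~\eqref{C1.T2.2}), every $x\in H^4(M)$ may be written as $x=c_2(E)$ for some $E\in\Bun(M)$, and then
\[
  q_N(Ax)=q_N\bigl(c_2(BE)\bigr)=12\,t_N(BE)=12\,t_M(E)=q_M\bigl(c_2(E)\bigr)=q_M(x),
\]
where Theorem~\ref{C1.T2}~\eqref{C1.T2.3} is applied once to $N$ and once to $M$. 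Hence $q_M=q_N\circ A$, so $q_M$ is isomorphic to $q_N$; combined with $\ek(N)=\ek(M)$, Theorem~\ref{C1.T1}~\eqref{C1.T1.2} then yields a diffeomorphism $N\cong M$.

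The step I expect to be the crux --- indeed the only place anything beyond bookkeeping occurs --- is the surjectivity of $c_2$ invoked in the converse: without it an isomorphism of $t$-invariants would control $q_M$ only on the image of $c_2$ rather than on all of $H^4(M)$, and the reduction to Theorem~\ref{C1.T1} would break down. Everything else is a direct manipulation of the compatibility conditions of Definition~\ref{B2.D1}.
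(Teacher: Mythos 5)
Your proposal is correct and follows exactly the route the paper intends: the paper states Corollary~\ref{C1.C1} as an immediate consequence of Theorem~\ref{C1.T1}~\eqref{C1.T1.2} and Theorem~\ref{C1.T2}~\eqref{C1.T2.3}, and your write-up simply supplies the bookkeeping, correctly flagging that the surjectivity of $c_2$ from Theorem~\ref{C1.T2}~\eqref{C1.T2.2} is the one ingredient needed to pass from an isomorphism of $t$-invariants to an isomorphism of the quadratic linking functions on all of $H^4(M)$.
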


\begin{proof}[Proof of Theorem~\ref{C1.T2}]
For (1) we use Proposition~\ref{S4.T1} below with~$(n, p, k) = (1, 2, 1)$.  One checks that this gives the bundle~$E = p_2^*E_2$ of Example~\ref{D1.K2} which is a bundle over~$S^7$ with~$t_{S^7}(E) = \frac{1}{12}$.  By Proposition~\ref{B3.P1}~\eqref{B3.P1.2},
  $t_{S^7}$ is a homomorphism with~$t_{S^7}(E^{\conn k}) = \frac{k}{12}$.

For part~\eqref{C1.T2.2}, let~$W$ be a handlebody with~$\partial W=M$ as above.
As~$W$ is homotopy equivalent to a wedge of $4$-spheres and~$BS^3$
is three-connected,
we have an isomorphism~$c_2\colon\Bun(W)\to H^4(W)$.
Hence~$c_2\colon\Bun(M)\to H^4(M)$ is onto by~\eqref{C1.0}.
There is a homotopy equivalence~$M \simeq M^\bullet \cup e^7$,
where~$M^\bullet : = M - {\rm int}(D^7)$ is homotopy equivalent
to a degree $3$-Moore space.
Because~$BS^3$ is three-connected,
the map~$c_2\colon\Bun(M^\bullet)\to H^4(M^\bullet)\cong H^4(M)$
is an isomorphism.
By the surjectivity of~$c_2\colon\Bun(M)\to H^4(M)$,
each quaternionic line bundle~$E^\bullet$ on~$M^\bullet$ extends to~$M$,
so~$E^\bullet|_{\partial e^7}$ is trivial
and~$\Bun(S^7)$ acts transitively on the set of possible extensions.

  
  The formula for~$t_M$ given in~\eqref{C1.T2.2} is a special case of the additivity formula of Proposition~\ref{B3.P1}~\eqref{B3.P1.2}.  Together with~\eqref{C1.T2.1}, it proves that the action of~$\Bun(S^7)$ on~$\Bun(M)$ is free.
  

For part~\eqref{C1.T2.3},
let~$\overline E\to W$ be a quaternionic line bundle
with~$c_2(\overline E)=\bar x\in H^4(W)$.
Let~$\hat c_2(\overline E)\in H^4(W,M;\Q)$ be a lift of~$c_2(\overline E)$.
We use equation~\eqref{B1.D1},
the fact that the $\widehat A$-genus begins~$\widehat A = 1 - \frac{p}{12} + \dots$ and~\eqref{B1.2a} to deduce the following formula
\begin{equation}  \label{C1.2} 
 t_M(i^*E) = \frac{1}{24}\bigl(\hat c_2(\overline E) \smile (p_W + c_2(\overline E) \bigr)[W,M] \in \Q/\Z \; .
\end{equation} 
Comparing~\eqref{C1.2} with~\eqref{C1.1} we see that~$q_M(c_2(E)) = 12t_M(E)$.
\end{proof}

%
%

We finish this subsection by recording calculations of~$t_M$ in some examples.  Let $n$ and $p$ be integers with $n \neq 0$ and let~$\pi \colon W_{n, p} \to S^4$ be the disc bundle of a vector bundle over the $4$-sphere with Euler class $e(\pi) = n \bar x$ and with $p_W = p \bar x$: here we fix a generator $\bar x$ of $H^4(S^4)$ and identify $H^4(S^4) = H^4(W_{n, p})$.   Let also~$i \colon M_{n, p} \to W_{n, p}$ be the inclusion of the boundary so that $M_{n, p}$ is the total space of a $3$-sphere bundle over $S^4$.  We remark that these total spaces were classified up to diffeomorphism, homeomorphism and homotopy equivalence in \cite{CE} but using a different notation: the total space denoted $M_{m, n}$ in \cite{CE} is the total space denoted $M_{n, n+2m}$ in this work (see, e.g.,  \cite[Fact 3.1]{CE}).

\begin{Proposition} \label{S4.T1}
With~$i \colon M_{n, p} \to W_{n, p}$ and $\bar x \in H^4(S^4)$ as above and $k$ any integer, let~$\overline E_k \to W_{n, p}$ be the quaternionic line bundle with $c_2(\overline E) = k \bar x$ and let $E_k := i^* \overline E_k$.  Then for $M = M_{n, p}$ we have
	$$t_{M_{}}(i^*E_k) 
	=\frac{k(p + k)}{24n}\quad\in\quad\Q/\Z\;.$$
  Moreover~for~$[k] := i^*(k \bar x) \in\Z/n\Z\cong H^4(M)$, the function~$q_M$ is given by
	$$q_{M_{}} ([k])=\frac{k(p + k)}{2n}\quad\in\quad\Q/\Z\;.$$
\end{Proposition}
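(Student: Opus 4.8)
The plan is to compute $t_{M_{n,p}}(i^*E_k)$ directly from the extrinsic formula~\eqref{C1.2} established in the proof of Theorem~\ref{C1.T2}, using the explicit disc bundle $W_{n,p}$ as the zero-bordism. The key observation is that $W_{n,p}$ retracts onto its zero section $S^4$, so all of its relevant cohomology is controlled by the single generator $\bar x \in H^4(S^4) = H^4(W_{n,p})$. The relative cohomology $H^4(W_{n,p}, M_{n,p})$ is likewise rank one, and the whole computation reduces to understanding one intersection number determined by the Euler class.

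First I would set up the cohomological bookkeeping. From the Gysin sequence (or equivalently the exact sequence~\eqref{C1.0}) the map $H^4(W_{n,p}, M_{n,p}) \to H^4(W_{n,p})$ is multiplication by the Euler number $n$ after identifying both groups with $\Z$ via $\bar x$; concretely, if $\hat x \in H^4(W_{n,p}, M_{n,p};\Q)$ is the rational lift of $\bar x$, then the fundamental cycle evaluation gives $(\hat x \smile \bar x)[W_{n,p}, M_{n,p}] = \frac{1}{n}$, since the self-intersection of the zero section equals its Euler number $n$. This is the one genuinely geometric input. Then I would substitute $c_2(\overline E_k) = k\bar x$ and $p_W = p\bar x$ into~\eqref{C1.2}:
\begin{equation*}
t_{M_{n,p}}(i^*E_k) = \frac{1}{24}\bigl(\hat c_2(\overline E_k) \smile (p_W + c_2(\overline E_k))\bigr)[W_{n,p}, M_{n,p}].
\end{equation*}
Here $\hat c_2(\overline E_k)$ is the lift $k\hat x$, so the bracket becomes $k(p+k)(\hat x \smile \bar x)[W_{n,p}, M_{n,p}] = \frac{k(p+k)}{n}$, yielding $t_{M_{n,p}}(i^*E_k) = \frac{k(p+k)}{24n}$. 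The formula for $q_M$ then follows immediately from Theorem~\ref{C1.T2}~\eqref{C1.T2.3}, which asserts $q_M(c_2(E)) = 12\,t_M(E)$; multiplying by $12$ gives $q_{M_{n,p}}([k]) = \frac{k(p+k)}{2n}$, with $[k] = i^*(k\bar x)$ the image of $c_2(\overline E_k)$ in $H^4(M) \cong \Z/n\Z$. Alternatively one can rederive the $q_M$ formula directly from its definition~\eqref{C1.1}, which provides a useful consistency check.

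The main obstacle, and the step deserving the most care, is pinning down the self-intersection number $(\hat x \smile \bar x)[W_{n,p}, M_{n,p}] = \frac{1}{n}$ with the correct sign and the correct dependence on $n$. One must verify that the rational lift $\hat x$ is genuinely $\frac{1}{n}$ times the integral generator of $H^4(W_{n,p}, M_{n,p})$ (not its inverse), which requires correctly orienting everything and tracking how the Euler class $e(\pi) = n\bar x$ governs the image of the restriction map. A clean way to handle this is the Thom isomorphism: the Thom class $u \in H^4(W_{n,p}, M_{n,p})$ satisfies $u|_{W_{n,p}} = e(\pi) = n\bar x$ and $\langle u \smile \bar x, [W_{n,p}, M_{n,p}]\rangle = 1$, so $\hat x = \frac{1}{n}u$ in rational cohomology and the desired evaluation is $\frac{1}{n}$. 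Once this single number is secured, everything else is formal substitution.
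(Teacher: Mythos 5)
Your proposal is correct and follows essentially the same route as the paper: the paper's proof simply observes that the intersection form of $W_{n,p}$ is $(\Z,n)$ (your evaluation $(\hat x\smile\bar x)[W_{n,p},M_{n,p}]=\frac1n$) and then substitutes into~\eqref{C1.2}. Your additional care with the Thom class and the rational lift $\hat x=\frac1n u$ is a sound way of justifying the step the paper leaves implicit.
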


\begin{proof}
The intersection form~$(H_4(W_{n, p}), \lambda)$ is isomorphic to~$(\Z, n)$.  We now simply apply the expression for~$t_M$ in~\eqref{C1.2}.
\end{proof}

\begin{Remark}
While it is possible to use the analytic definition of the $t$-invariant to compute~$t_M$ in this case, the computations still require sophisticated technique and are somewhat lengthy compared to the topological definition.
\end{Remark}

%
%
%
%

In the search for new compact Riemannian
manifolds of positive sectional curvature,
Grove, Wilking and Ziller considered two families~$M_{(p_-,q_-),(p_+,q_+)}$
and~$N_{(p_-,q_-),(p_+,q_+)}$ of seven-manifolds of cohomogeneity one
in~\cite{GWZ}.
The first family consists of two-connected manifolds,
while the second is of the type considered by Kreck and Stolz in~\cite{KS}.
By~\cite{GWZ},
the manifolds~$P_n=M_{(1,1),(2n-1,2n+1)}$ are the only members
of the $M$-family that can carry metrics of positive sectional curvature.
It was then proved independently by Dearricott~\cite{Dear} and Grove, Verdiani
and Ziller~\cite{GVZ} that the particular space~$M_{(1,1),(3,5)}$,
which is homeomorphic to the unit tangent bundle of~$S^4$,
carries a metric of positive sectional curvature.
In~\cite{Goe},
one of us determined the diffeomorphism types of the spaces~$P_n$
using Corollary~\ref{C1.C1} above.

Let~$(p_+,q_+)$ and~$(p_-,q_-)$ be two pairs of relative prime
positive odd integers.
According to~\cite{GWZ},
the manifold~$M=M_{(p_-,q_-),(p_+,q_+)}$
has~$H^4(M)\cong\Z/n\Z$ with~$n=\left|p_-^2q_+^2-p_+^2q_-^2\right|/8$.
One can write~$M$ as the total space
of two Seifert fibrations~$\pi_1$, $\pi_2$ over the base~$S^4$.
Both Seifert fibrations are singular
over two disjoint submanifolds of~$S^4$ diffeomorphic to~$\R P^2$,
and the type of the singularity is described by the two pairs~$(p_\pm,q_\pm)$.
Using a generalisation of the adiabatic limit formula for eta-invariants
to Seifert fibrations,
it is possible to compute~$t_M(\pi_i^*E)$ for all quaternionic line
bundles~$E\to S^4$.

\begin{Example} \label{C1.E1}
  Let~$E\to S^4$ be a quaternionic line bundle with~$c_2(E)[S^4]=k\in\Z$.
  By~\cite{Goe},
  	$$t_M(\pi_1^*E)=\frac{k(p_+^2-p_-^2-n+k p_-^2p_+^2)}{24n}\;.$$
  Swapping the roles of~$p_\pm$ and~$q_\pm$ gives the analogous
  formula for~$t_M(\pi_2^*E)$.
\end{Example}

For the manifolds~$P_n$,
the formula above specialises to
	$$t_{P_n}(\pi_1^*E)=\frac{k(k-n)}{24n}\;.$$
By Theorems~\ref{C1.T1} and~\ref{C1.T2} and Proposition~\ref{S4.T1} above,
these spaces $P_n$ are almost diffeomorphic to the manifolds~$M_{n,n}$ of Proposition \ref{S4.T1}:
in other words, to the total spaces of the principal $S^3$-bundles over~$S^4$
with Euler class given by~$n$.
The classification is then completed
by computing the Eells-Kuiper invariant~$\mu(P_n)$.
In particular,
	$$P_n\cong M_{n,n}\conn\Sigma^{\#\frac{n-n^3}6}\;,$$
where~$\Sigma$ is a generator of~$\Theta_7$ with~$\mu(\Sigma)=\frac1{28}$.

For the other spaces in the~$M$ family,
one can to combine the formulas for~$t_M(\pi_i^*E)$ for both fibrations
with equation~\eqref{C1.3} to determine~$q_M$ completely.

\subsection{Detecting exotic homeomorphisms} \label{C2}
Throughout this subsection~$M$ and~$N$ will be closed smooth oriented 2-connected rational homology $7$-spheres and all maps will be orientation preserving.  We call a homeomorphism
\[ h \colon N \cong M\]
{\em exotic} if it is not homotopic to a piecewise linear (PL) homeomorphism.  The main result of this section is that~$h$ is exotic if and only if the induced map~$h^* \colon \Bun(M) \to \Bun(N)$ does not preserve the $t$-invariants of~$M$ and~$N$.

To make a more precise statement we first recall the following consequence of topological surgery from~\cite{KiSi}.  For a homeomorphism~$h \colon N \cong M$ there is an invariant 
\[ \KS(h) \in H^3(M; \Z/2) \; , \]
depending only on the homotopy class of~$h$, such that~$h$ is exotic if and only if~$\KS(h) \neq 0$.

\begin{Theorem} \label{C2.T1}
A homeomorphism~$h: N \to M$ is exotic if and only if
\[t_M \neq t_N \circ h^*.\]
More precisely, for all~$E \in \Bun(M)$,  
\[ \bigl(\KS(h) \smile c_2(E) \bigr) [M]_2  =  t_N(h^*E) - t_M(E) \in \Z/2 \subset \Q/\Z \;, \]
where~$[M]_2$ generates~$H_7(M; \Z/2)$. 
\end{Theorem}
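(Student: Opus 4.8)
The plan is to relate the $t$-invariant computed on $M$ to the $t$-invariant computed on $N$ via the homeomorphism $h$, and to isolate the discrepancy as a secondary characteristic number governed by $\KS(h)$. The starting observation is that both $M$ and $N$ bound smooth handlebodies, say $\partial W_M = M$ and $\partial W_N = N$, by the result of Wilkens recalled in Section~\ref{C1}. A homeomorphism $h\colon N\to M$ cannot in general be smoothed, so one cannot directly glue $W_N$ to $W_M$ along $h$ inside the smooth category. Instead I would glue topologically: form the closed \emph{topological} $8$-manifold
\[
  V = W_M \cup_h (-W_N),
\]
and use the smooth extrinsic formula~\eqref{C1.2} for $t_M$ and $t_N$ on each piece. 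The difference $t_N(h^*E) - t_M(E)$ then becomes a characteristic number of $V$ relative to the choices of lifts $\hat c_2$ on the two sides, and the obstruction to this number being an ordinary (smooth) integer is exactly the failure of $V$ to carry a smooth structure, which by Kirby--Siebenmann is measured by $\KS(h)$.

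Concretely, the key steps are as follows. First I would choose a quaternionic line bundle $\overline E_M\to W_M$ extending $E$ and a bundle $\overline E_N\to W_N$ extending $h^*E$; since $\Bun(W)\cong H^4(W)$ for a handlebody, these exist and their second Chern classes lift canonically. Second, using~\eqref{C1.2} I would write each $t$-invariant as
\[
  t_M(E)=\tfrac{1}{24}\bigl(\hat c_2(\overline E_M)\smile(p_{W_M}+c_2(\overline E_M))\bigr)[W_M,M],
\]
and similarly for $t_N(h^*E)$, so that $24\bigl(t_N(h^*E)-t_M(E)\bigr)$ is the evaluation over the closed topological manifold $V$ of $\tfrac{1}{2}\bar c_2\smile(p_V+c_2)$, where $\bar c_2$ and $p_V$ are the classes glued from the two sides. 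Third, the fractional ambiguity coming from the fact that $V$ has no smooth structure must be pinned down: the Atiyah--Singer integrality that makes the right-hand side of~\eqref{C1.2} well-defined mod~$\Z$ relies on smoothness, and on a merely topological $V$ the relevant characteristic number is corrected by the Kirby--Siebenmann class $\mathrm{ks}(V)\in H^4(V;\Z/2)$. I would use the formula $\mathrm{ks}(V)=\KS(h)$ (pushed into $V$ via the boundary identification) together with the fact that on such $V$ the $\widehat A$-type integrand picks up a $\tfrac12\,\mathrm{ks}(V)\smile(\text{degree-}4\text{ class})$ correction, so that
\[
  t_N(h^*E)-t_M(E)\equiv \bigl(\KS(h)\smile c_2(E)\bigr)[M]_2 \pmod{\Z}.
\]
Here $c_2(E)$ reduced mod~$2$ is the degree-$4$ partner, and only the mod-$2$ reduction survives because $\KS$ and the correction term are $2$-torsion.

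The main obstacle I anticipate is making the third step rigorous: one must carefully track how the smooth index-theoretic integrality of~\eqref{C1.2} degenerates on a topological manifold and identify the precise coefficient of the Kirby--Siebenmann correction. This requires either invoking a topological version of the index theorem or, more in the spirit of the paper, reducing to a model computation. A clean way to do this is to reduce to the case where $h$ differs from a diffeomorphism by a single self-homeomorphism of $S^7$ (or a connected sum with the relevant topological but non-smoothable building block), compute the correction in that universal model, and then transport the answer by additivity (Proposition~\ref{B3.P1}~\eqref{B3.P1.2}) and naturality. The pairing $\bigl(\KS(h)\smile c_2(E)\bigr)[M]_2$ is manifestly bilinear and natural, so once the universal coefficient is shown to be~$1$, the general formula follows. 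The final equivalence---that $h$ is exotic iff $t_M\neq t_N\circ h^*$---is then immediate, since $\KS(h)\neq 0$ combined with surjectivity of $c_2$ from Theorem~\ref{Z.T1}~\eqref{Z.T1.1} and nondegeneracy of the cup product pairing into $H_7(M;\Z/2)$ guarantees the existence of some $E$ on which the two invariants differ.
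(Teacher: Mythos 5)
Your overall strategy --- glue $W_M\cup_h(-W_N)$ into a closed topological $8$-manifold $V$ and read off $t_N(h^*E)-t_M(E)$ as the failure of integrality of $\tfrac1{24}\bar c_2\smile(p_V+\bar c_2)$ on $V$ --- is genuinely different from the paper's, which stays entirely in the smooth/PL world: the paper identifies $\mathcal{S}^{PL}(M)$ and $\mathcal{S}^{\Top}(M)$ via the surgery exact sequence (Lemma~\ref{C2.L2}), realises every element $x$ of the kernel $H^3(M;\Z/2)$ by an explicit pinch-map self-equivalence $p(x)$ built from classes $\psi_i\in\pi_7(M^\bullet)$ (Lemmas~\ref{C2.L3}--\ref{C2.L5}), and then computes $t_M(p(x)^*E)-t_M(E)$ by pure additivity, since $p(x)^*E=E\conn F_E$ with $F_E\in\Bun(S^7)$ determined by the parity of $c_2(E)$. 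However, your plan has a genuine gap exactly where you locate the difficulty. The assertion that on a merely topological closed spin $8$-manifold the characteristic number $\tfrac1{24}\bigl(\bar c_2\smile(p_V+\bar c_2)\bigr)[V]$ fails to be an integer by precisely $\tfrac12\bigl(\mathrm{ks}(V)\smile c_2\bigr)[V]$ is essentially the entire content of the theorem; it is not a citable fact (there is not even an obvious candidate for an integral spin class $p_V$ of a topological $8$-manifold, and no Dirac operator to supply the integrality), and you give no argument for it.

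Moreover, the fallback you propose for pinning down the universal coefficient does not work: there is no ``universal model'' supported on the top cell. Exotic homeomorphisms are governed by $\KS(h)\in H^3(M;\Z/2)$, which vanishes for $S^7$, so every self-homeomorphism of $S^7$ is non-exotic and a connected-sum reduction to $S^7$ computes nothing. The exotic self-equivalences genuinely involve the $2$-torsion of $H_3(M)$: in the paper they are pinch maps along classes $\psi_i\in\pi_7(M^\bullet)$ living over the Moore-space wedge summands with even attaching degree, and the coefficient $6\in\Z/12\cong\Bun(S^7)$ (whence the value $\tfrac12$) comes out of the stabilisation computation $S\colon\pi_5(S^2)\to\pi_6(S^3)$, $\varPhi\mapsto 6\Psi$, in Lemma~\ref{C2.L4}. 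Some replacement for this homotopy-theoretic input --- or a proof of your topological index formula with its Kirby--Siebenmann correction --- is needed; as written the proposal defers the theorem to an unestablished lemma. (Your final step, deducing the ``if and only if'' from the displayed formula via surjectivity of $c_2$ and Poincar\'e duality mod $2$, is fine.)
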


We begin by showing that~$t_M$ is invariant under PL-homeomorphisms.

\begin{Lemma} \label{C2.L1}
If~$h \colon N \cong M$ is a $PL$-homeo\-morphism then~$t_M = t_N \circ h^*$.
\end{Lemma}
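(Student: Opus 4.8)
The plan is to reduce the statement to the diffeomorphism-invariance already established in Proposition~\ref{B3.P1}, using the fact that a PL-homeomorphism between smooth manifolds can be smoothed after connected sum with a homotopy sphere. The key point is that by Kirby-Siebenmann theory in dimension~$7$, every PL-homeomorphism is \emph{isotopic} to a PL map that is smooth away from a point, or---more usefully---that PL and smooth structures on a fixed topological manifold differ precisely by the connected-sum action of the group~$\Theta_7$ of homotopy spheres. Concretely, if~$h\colon N\to M$ is a PL-homeomorphism, then there is a homotopy sphere~$\Sigma\in\Theta_7$ and a \emph{diffeomorphism}~$\bar h\colon N\conn\Sigma\cong M$ such that~$\bar h$ is PL-isotopic (hence homotopic) to~$h$, where the smooth structure on~$N\conn\Sigma$ agrees with that of~$N$ as a PL manifold. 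This is the standard identification~$\KS(h)=0$ yielding a PL-isotopy to a diffeomorphism, combined with the fact that the Gromoll filtration/smoothing theory in dimension~$7$ realises the difference between PL and smooth concordance classes by the action of~$\Theta_7$ via connected sum.

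First I would invoke the vanishing~$\KS(h)=0$, which holds precisely because~$h$ is PL (the Kirby-Siebenmann obstruction is by definition the obstruction to a homeomorphism being homotopic to a PL-homeomorphism, and here~$h$ already \emph{is} PL). Next I would use that in dimension~$7$ the natural map~$\Theta_7^{PL}\to\Theta_7$ comparing PL and smooth homotopy spheres is understood: since~$\Theta_7$ is finite and every PL-homeomorphism of a smooth $7$-manifold can be smoothed after connected sum with an appropriate homotopy sphere, I obtain a homotopy sphere~$\Sigma$ and a diffeomorphism~$f\colon N\conn\Sigma\cong M$ homotopic to~$h$. Crucially,~$\Bun$ and~$c_2$ and~$t$ are all invariants of the underlying space up to homotopy, so~$h^*=f^*\colon\Bun(M)\to\Bun(N\conn\Sigma)=\Bun(N)$, using the identification of~$N$ with~$N\conn\Sigma$ as spaces noted in Proposition~\ref{B3.P1}~\eqref{B3.P1.3}.

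Then I would simply apply naturality under almost-diffeomorphisms, Proposition~\ref{B3.P1}~\eqref{B3.P1.3}, which states exactly that~$t_N\circ f^*=t_M$ whenever~$f\colon N\conn\Sigma\cong M$ is a diffeomorphism. Combining~$h^*=f^*$ with this gives~$t_N\circ h^*=t_M$, as required.

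The main obstacle is the smoothing step: justifying that a PL-homeomorphism between \emph{smooth} $7$-manifolds is homotopic to a genuine diffeomorphism after connected sum with a single homotopy sphere. This requires citing the relevant smoothing theory (Cerf, Hirsch-Mazur, or the Kirby-Siebenmann framework together with~$\Theta_7\cong\Theta_7^{PL}$ in this dimension) carefully, and checking that the homotopy sphere arising is absorbed by the~$\Theta_7$-action so that~$f$ and~$h$ genuinely induce the same map on~$\Bun$. Everything after that reduction is immediate from the already-proven additivity and almost-diffeomorphism naturality of~$t$.
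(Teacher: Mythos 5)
Your proposal is correct and follows essentially the same route as the paper: the paper also invokes smoothing theory (citing \cite[\S 6, Theorem 6.1]{Crow}) to replace the PL-homeomorphism $h$ by a homotopic diffeomorphism $g\colon N\conn\Sigma\to M$ for some homotopy sphere $\Sigma$, and then concludes by Proposition~\ref{B3.P1}~\eqref{B3.P1.3}. The only difference is that the paper outsources the smoothing step to a specific reference rather than arguing it from first principles.
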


\begin{proof}


It follows from smoothing theory, see~\cite[\S 6,\,Theorem 6.1]{Crow}, that there is a homotopy $7$-sphere~$\Sigma$ such that~$h$ is homotopic to a diffeomorphism~$g \colon N \conn \Sigma \to M$.  The lemma now follows by Proposition~\ref{B3.P1}~\eqref{B3.P1.3}.
\end{proof}

\begin{Remark}
Note that~\cite[Theorem 6.1]{Crow} also proves that~$N$ is homeomorphic to~$M$ if and only if~$N$ is $PL$-homeomorphic to~$M$.  Hence the function~$t_M$ is a topological invariant in the weak sense that if~$h \colon N \to M$ is a homeomorphism then there is a homeomorphism, indeed a $PL$-homeomorphism, $g\colon N \to M$ such that~$t_M = t_N \circ g^*$.
\end{Remark}


\begin{proof}[Proof  of Theorem~\ref{C2.T1}]  
We will not explicitly construct exotic homeomorphisms but rather we use surgery theory to show that they exist.  Hence we begin by briefly recalling some essential notions from surgery.  Let~$\Cat = \Top$ or ~$PL$ denote respectively the topological and piecewise linear categories of manifolds.  Recall the $\Cat$-structure set of~$M$, 
\[ \mathcal{S}^\Cat (M) := \{g\colon N \simeq M \, | \,  \text{$N$ a $\Cat$-manifold} \} / \sim \; , \]
which consists of equivalence classes of {\em structures} which are homotopy equivalences~$g \colon N \to M$.  Two structures~$g_0$ and~$g_1$ are equivalent if~$g_1 \circ g_0^{-1}$ is homotopic to a $\Cat$-isomorphism.  The base-point of~$\mathcal{S}^{\Cat}(M)$ is the trivial element~$[{\rm Id} \colon M \to M]$.

The $\Cat$-structure set of~$M$ lies in the $\Cat$-surgery exact sequence (see~\cite[Chapter 10]{Wa}, \cite{KiSi} for more definitions and details):
$$L_8(e)\longrightarrow  \mathcal{S}^{\Cat}(M)\stackrel{\eta^{\Cat}}\longrightarrow [M, G/\Cat]\longrightarrow  L_7(e) \;.$$
Here~$L_7(e) = 0$ and~$L_8(e) \cong \Z$ are the simply connected surgery obstruction groups, $G = {\rm lim}_n {\rm Map}_{\pm 1}(S^n, S^n)$ is the monoid of stable self-equivalences of the $n$-sphere, $\Top$ and~$PL$ are the groups of stable homeomorphisms, respectively PL-homeomorphisms, of Euclidean space and~$\eta^\Cat$ denotes the $\Cat$-normal invariant map.  It is well known that the map~$L_8(e) \to \mathcal{S}^\Cat (M)$ vanishes in both the topological or piecewise linear categories.  So from the $\Cat$-surgery exact sequences we obtain the following commuting square:
\begin{equation} \label{C2.E}
  \begin{CD}
   \mathcal{S}^{PL}(M) @>F>> \mathcal{S}^{\Top}(M)\\
    @VV\eta^{PL}V@VV\eta^{\Top}V\\
    [M, G/PL]@>F_*>>[M, G/\Top]
  \end{CD}
\end{equation}
Here each~$\eta^{\Cat}$ is a bijection, $F$ is the forgetful map and the canonical map~$G/PL \to G/\Top$ induces~$F_*$. By definition, an exotic homeomorphism~$h \colon N \cong M$ defines a non-trivial element of~$\mathcal{S}^{PL}(M)$ which maps to the trivial element of~$\mathcal{S}^{\Top}(M)$.  

\begin{Lemma} \label{C2.L2}
There are bijections~$\mathcal{S}^{PL}(M) \equiv H^4(M)$ and~$\mathcal{S}^{\Top}(M) \equiv H^4(M)$ such that the forgetful map~$F$ corresponds to the map~$\times 2$ in the cohomology Bockstein sequence for the coefficient sequence~$0 \to \Z \to \Z \to \Z/2 \to 0$.  In particular, there is a short exact sequence of abelian groups
\[ 0 \longrightarrow  H^3(M; \Z/2) \longrightarrow  \mathcal{S}^{PL}(M) \stackrel{F}{\longrightarrow } \mathcal{S}^{\Top}(M) \longrightarrow  H^4(M; \Z/2) \; . \]
\end{Lemma}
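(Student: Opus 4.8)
**The plan is to identify each structure set with a set of homotopy classes of normal invariants, compute those homotopy groups explicitly, and then analyse the forgetful map via the fibration $G/PL \to G/\Top$.**

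The plan is to use the fact, recorded in the surgery exact sequence above, that each normal invariant map $\eta^\Cat \colon \mathcal{S}^\Cat(M) \to [M, G/\Cat]$ is a bijection (since $L_7(e) = 0$ and the image of $L_8(e)$ is trivial). Thus it suffices to compute $[M, G/PL]$ and $[M, G/\Top]$ and to understand the map $F_*$ they induce. First I would recall the low-dimensional homotopy of the relevant spaces: $G/\Top$ has homotopy groups $\pi_i(G/\Top) \cong \Z, 0, \Z, 0$ for $i \equiv 0,1,2,3 \bmod 4$ (with $\pi_{4j} \cong \Z$ detected by a signature/$L$-class and $\pi_{4j+2} \cong \Z/2$ by a Kervaire--Arf invariant), while $G/PL$ agrees with $G/\Top$ except in degree $4$, where the map $\pi_4(G/PL) \cong \Z \to \pi_4(G/\Top) \cong \Z$ is multiplication by $2$. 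Here I would cite the standard identifications of these spaces from \cite{KiSi} and \cite{Wa}.

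Next I would use that $M$ is a $2$-connected rational homology $7$-sphere, so its only possibly-nontrivial reduced cohomology sits in degrees $3,4$ (and $7$), with $H^4(M)$ finite and $H^3(M) = 0$ integrally but $H^3(M;\Z/2)$ possibly nontrivial via the Bockstein. Because $M$ is $2$-connected and $7$-dimensional, only the $4$-dimensional part of $G/\Cat$ contributes to $[M, G/\Cat]$ in an essential way; an obstruction-theory computation of $[M, G/\Cat]$ using a Postnikov decomposition of $G/\Cat$ then yields a bijection with $H^4(M)$ in both categories. The key point is that the single difference between $G/PL$ and $G/\Top$, namely the degree-$4$ map being $\times 2$, translates directly under these identifications into the statement that $F \colon \mathcal{S}^{PL}(M) \to \mathcal{S}^{\Top}(M)$ corresponds to the multiplication-by-$2$ map $\times 2 \colon H^4(M) \to H^4(M)$. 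The short exact sequence claimed in the lemma is then precisely the portion
\[ 0 \to H^3(M;\Z/2) \to H^4(M) \xrightarrow{\times 2} H^4(M) \to H^4(M;\Z/2) \]
of the integral Bockstein sequence for $0 \to \Z \to \Z \to \Z/2 \to 0$, using $H^3(M) = 0$ (so the kernel of $\times 2$ is the image of the Bockstein $\beta \colon H^3(M;\Z/2) \to H^4(M)$, which is injective) and the surjection onto $H^4(M;\Z/2)$ coming from reduction mod $2$.

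I expect the main obstacle to be making the identification $[M, G/\Cat] \cong H^4(M)$ canonical and functorial in $\Cat$, i.e.\ ensuring that the \emph{same} cohomology class parametrises both structure sets so that $F$ really is $\times 2$ rather than $\times 2$ composed with some unknown automorphism. This requires tracking the class in $\pi_4(G/PL) \cong \pi_4(G/\Top) \cong \Z$ through the normal-invariant identification and confirming that the characteristic class splitting of $G/\Top$ (in the sense of \cite{KiSi}) restricts compatibly to $G/PL$; the potential contributions of the degree-$6$ Kervaire class and any higher Postnikov $k$-invariants must be checked to vanish on a $2$-connected $7$-manifold, which they do for dimension reasons but should be verified. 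Once the degree-$4$ identification is pinned down canonically, the rest is a direct reading-off of the Bockstein sequence.
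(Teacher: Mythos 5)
Your proposal is correct and follows essentially the same route as the paper: both reduce, via the normal-invariant bijections $\eta^{\Cat}$, to computing $F_*\colon [M,G/PL]\to[M,G/\Top]$, observe that only $\pi_4(G/\Cat)$ can contribute for a $2$-connected rational homology $7$-sphere, and identify the comparison there with $\times 2\colon\Z\to\Z$, after which the stated sequence is just the integral Bockstein sequence. The only difference is bookkeeping: the paper computes $[M,G/\Cat]\cong H^4(M)$ from the cell structure $M\simeq M^\bullet\cup_\phi e^7$ with $M^\bullet$ a wedge of Moore spaces, disposing of the potential $\pi_6(G/\Cat)\cong\Z/2$ contribution by showing $\phi^*$ factors through $\pi_5(G/\Cat)=0$, whereas you work with a Postnikov decomposition of $G/\Cat$ and kill the degree-$6$ Kervaire term using $H^6(M;\Z/2)\cong H_1(M;\Z/2)=0$; these are equivalent standard arguments.
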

%
%
\begin{proof}
Using~\eqref{C2.E} if suffices to calculate~$F_* \colon [M, G/PL] \to [M, G/\Top]$ which is routine: there is a homotopy equivalence 
\[ M \simeq M^\bullet \cup_\phi e^7\]
where~$M^\bullet := M - {\rm int}(D^7)$, $\phi$ is the attaching map for the top cell of~$M$ and~$M^\bullet$ is homotopic to the Moore space~$M(H, 3)$ where~$H = H_3(M)$.  We write~$H = \oplus_{i=1}^r C_{k(i)}$ as a sum of cyclic groups and so we have a homotopy equivalence 
\[ M^\bullet \simeq \bigvee_{i=1}^rM(\Z/k(i), 3) \simeq \bigvee_{i=1}^r(S^3_i \cup_{k(i)} e^4_i) \]
where~$k(i)$ denotes a map~$S^3_i \to S^3_i$ of degree~$k(i)$.

It is known that the homotopy groups of~$G/PL$ and~$G/\Top$ vanish in odd dimensions and that~$\pi_4(G/PL) \to \pi_4(G/\Top)$ is isomorphic to~$\times 2 \colon \Z \to \Z$.  We deduce that the homomorphism~$[M^\bullet, G/PL] \to [M^\bullet, G/\Top]$ is isomorphic to~$H^4(M^\bullet) \stackrel{\!\times 2}{\longrightarrow} H^4(M^\bullet)$.

It remains to show that the inclusion~$M^\bullet \to M$ of the 4-skeleton induces isomorphisms~$[M, G/\Cat] \cong [M^\bullet, G/\Cat]$.  Let 
\begin{equation} \label{C2.E1}
 C \colon M^\bullet \longrightarrow  \bigvee_{i=1}^rS^4
\end{equation}
be the map collapsing the $3$-skeleton and observe that the homomorphism
 \[ C^* \colon \oplus_{i=1}^r \pi_4(G/\Cat) \longrightarrow  [M^\bullet, G/\Cat] \]
is onto because~$\pi_3(G/\Cat)=0$.  It follows that the induced homomorphism
\[ \phi^* \colon [M^\bullet, G/\Cat] \longrightarrow  \pi_6(G/\Cat) \]
vanishes since the composite~$C \circ \phi \colon S^6 \to \vee S^4$ must be a wedge of trivial maps or the essential map~$\eta^2 \colon S^6 \to S^4$
\footnote{In fact~$c \circ \phi$ is null-homotopic since~$M$ is a manifold and the homotopy class of~$c \circ \phi$ detects the exotic class of a 2-connected 7-dimensional Poincar\'{e} complex.} 
and the induced homomorphism~$(\eta^2)^* \colon \pi_4(G/\Cat) \to \pi_6(G/\Cat)$ factors through~$\pi_5(G/\Cat) = 0$.  From the short exact sequence
\[ 0=\pi_7(G/\Cat) \longrightarrow  [M, G/\Cat] \longrightarrow  [M^\bullet, G/\Cat] \stackrel{\phi^*}{\longrightarrow } \pi_6(G/\Cat)\]
we deduce that~$[M, G/\Cat] \to [M^\bullet, G/\Cat]$ is an isomorphism.
\end{proof}

Now note that~$H^3(M; \Z/2) = H^4(M; \Z/2) = 0$ if and only if~$H^4(M)$ contains no $2$-torsion.  In this case the forgetful map~$F \colon \mathcal{S}^{PL(M)} \to \mathcal{S}^{\Top}(M)$ is a bijection and~$M$ admits no exotic homeomorphisms.  

{\em Henceforth we assume that~$H^4(M)$ contains 2-torsion.}  We see that the~$PL$ normal invariants of exotic homeomorphisms lie in 
\[{\rm Ker}\bigl([M, G/PL] \stackrel F\longrightarrow [M, G/\Top]\bigr) \cong H^3(M; \Z/2) \; .\]
Using Lemma~\ref{C2.L2} we shall regard~$H^3(M; \Z/2)$ as a subset of~$\mathcal{S}^{PL}(M) \equiv [M, G/PL]$.

\begin{Lemma} \label{C2.L3}
Let~$x \in H^3(M; \Z/2)$.  Then there is a self-homotopy equivalence~$p(x) \colon M \simeq M$ such that:
\begin{enumerate}
\item\label{C2.L3.1} the PL normal invariant of~$p(x)$ is~$x \in [M, G/PL]$,
\item\label{C2.L3.2} $t_M(p(x)^*E) - t_M(E) = \bigl( x \smile c_2(E)  \bigr) [M]_2$ for all~$E \in \Bun(M)$.
\end{enumerate}
\end{Lemma}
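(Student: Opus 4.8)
The plan is to construct, for each class $x \in H^3(M;\Z/2)$, a self-homotopy equivalence $p(x)$ whose $PL$ normal invariant is precisely $x$, and then to compute the change in the $t$-invariant it induces. For the existence and normal-invariant part, I would exploit the identification $\mathcal{S}^{PL}(M) \equiv [M, G/PL]$ from Lemma~\ref{C2.L2} together with the inclusion $H^3(M;\Z/2) \hookrightarrow \mathcal{S}^{PL}(M)$. The subtle point is that an element of the structure set is a priori only a homotopy equivalence $g \colon N \to M$ from some $PL$-manifold $N$; I want $N = M$ itself so that $p(x)$ is a genuine \emph{self}-map and $p(x)^*$ acts on $\Bun(M)$. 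Since $x$ lies in the kernel of $F \colon \mathcal{S}^{PL}(M) \to \mathcal{S}^{\Top}(M)$, the underlying $\Top$-structure is trivial, so $g$ is $\Top$-equivalent to the identity; combined with the uniqueness results of~\cite{Crow} (the homeomorphism-implies-$PL$-homeomorphism statement recorded after Lemma~\ref{C2.L1}) this lets me arrange the source to be $M$ and realise $p(x)$ as a self-homotopy equivalence with $PL$ normal invariant $x$, giving~\eqref{C2.L3.1}.

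For the main formula~\eqref{C2.L3.2}, I would use the extrinsic/bordism description of the $t$-invariant rather than the $\eta$-invariant form. The idea is to realise the pair $(M,E)$ and its image $(M, p(x)^*E)$ simultaneously as boundary components of a single spin coboundary: the mapping cylinder or normal cobordism supplied by the surgery-theoretic construction of $p(x)$ furnishes a spin manifold $W$ with $\partial W = M \sqcup (-M)$ carrying a quaternionic line bundle $\overline E$ restricting to $E$ and $p(x)^*E$ on the two ends. Then $t_M(p(x)^*E) - t_M(E) = \tau_{\partial W}(\overline E)$ by Proposition~\ref{B2.P1}, and this equals a characteristic-number evaluation $-\frac{1}{a_{k+1}}(\widehat A(TW)\,\ch'(\overline E)\,\bar c_2(\overline E))[W,M]$ via~\eqref{B1.D1}. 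Because $x$ has order $2$, only the mod-$2$ reduction survives, and the $\widehat A$- and $\ch'$-factors contribute trivially after reduction, leaving the cup product of $x$ (interpreted via the normal invariant as the relevant $\Z/2$-cohomology class of the cobordism) with $c_2(E)$.

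The technical heart is identifying the $\Z/2$-characteristic number coming out of $W$ with the intrinsic expression $(x \smile c_2(E))[M]_2$. This requires tracking how the normal invariant $x \in [M, G/PL] \cong H^3(M;\Z/2)$ enters the cohomology of the cobordism $W$ and pairs with $c_2(\overline E)$; concretely I expect to relate $x$ to the difference class $\bar c_2(\overline E)$ restricted to the boundary and to use that the mod-$2$ reduction of the index-theoretic integrand collapses to the Stiefel-Whitney/Wu-type pairing detecting the $PL$ normal invariant. I would verify the coefficient by a functoriality/naturality argument: both sides are homomorphisms in $x$ and natural in $E$, so it suffices to check the formula on generators, which reduces to understanding a single cell of $M^\bullet$.

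The hard part will be making the last identification rigorous, i.e. showing that the normal invariant $x$ contributes exactly the term $(x \smile c_2(E))[M]_2$ and with the correct (unit) coefficient, rather than a multiple of it. The cleanest route is probably to choose a model coboundary $W$ for which the map $M \to G/PL$ representing $x$ is realised geometrically — for instance via the normal cobordism to the identity — so that the surgery-kernel carrying $x$ sits in a single degree and the product $\widehat A \cdot \ch' \cdot \bar c_2$ reduces, modulo $2$ and modulo lower-order $\widehat A$-terms that vanish on the relevant class, to the topological cup product. I would defer the verification of~\eqref{C2.L3.2} in the non-bounding case to the same device used in the proof of Proposition~\ref{B2.P1}, replacing $(M,E)$ by a suitable multiple so that a spin zero-bordism exists, and then dividing out; since the right-hand side is already $\Z/2$-valued, no information is lost in this reduction.
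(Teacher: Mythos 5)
Your approach to part~\eqref{C2.L3.1} is workable but non-constructive: taking a $PL$-structure $g\colon N\to M$ with normal invariant $x$, noting that $F[g]=0$ forces $N$ to be homeomorphic, hence $PL$-homeomorphic, to $M$, and precomposing with a $PL$-homeomorphism does yield a self-homotopy equivalence with normal invariant $x$. The real problem is that this gives you no control whatsoever over the induced map on $\Bun(M)$, which is exactly what part~\eqref{C2.L3.2} is about. The paper avoids this by constructing $p(x)$ \emph{explicitly} as a pinch map $p(\psi)=(\id\vee\psi)\circ p$ along a carefully chosen suspension class $\psi_i=S(\varphi_i)\in\pi_7(M^\bullet)$ with $C_*(\psi_i)=(0,6)\in\Z\oplus\Z/12\cong\pi_7(S^4_i)$ (Lemma~\ref{C2.L4}); the suspension property is what makes the tangential normal invariant computable via \cite{MTW} (Lemma~\ref{C2.L5}), and the explicit pinch structure is what makes $p(x)^*E=E\conn F_E$ with $F_E$ determined by the parity of $c_2(E)[S^4_1]$.

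For part~\eqref{C2.L3.2} your argument has a genuine gap. The spin coboundary you propose does not exist: there is no normal cobordism from $p(x)$ to the identity precisely because their normal invariants differ, and a quaternionic line bundle $\overline E$ on $M\times[0,1]$ restricting to $E$ and $p(x)^*E$ on the two ends would force $E\cong p(x)^*E$, which is what you are trying to refute. Moreover the claim that ``the $\widehat A$- and $\ch'$-factors contribute trivially after mod-$2$ reduction'' is not an argument — the extrinsic formula \eqref{C1.2} has denominator $24$, and nothing about $x$ having order $2$ collapses it to a $\Z/2$-valued cup product. The actual mechanism is homotopy-theoretic, not index-theoretic: $t_M(p(x_1)^*E)-t_M(E)=t_{S^7}(F_E)$ by additivity (Proposition~\ref{B3.P1}~\eqref{B3.P1.2}), where $F_E\in\Bun(S^7)\cong\Z/12$ equals $6$ or $0$ according to the parity of $c_2(E)[S^4_1]$; the ``$6$'' comes from the fact that stabilisation $\pi_5(S^2)\to\pi_6(S^3)$ sends the generator to $6\Psi$, and then $t_{S^7}(6)=\tfrac12$ by Theorem~\ref{C1.T2}~\eqref{C1.T2.1}. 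This input — together with the factorisation of $f|_{M^\bullet}$ through the collapse map $C$ because $BS^3$ is $3$-connected — is entirely absent from your proposal, and your naturality-in-$x$ reduction does not repair it, since homomorphy of $x\mapsto p(x)^*$ is itself unclear for an abstractly constructed $p(x)$ and the check on generators is precisely the missing computation.
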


Before proving Lemma~\ref{C2.L3} let us complete the proof of Theorem~\ref{C2.T1}.  Let~$h \colon N \simeq M$ be a homeomorphism.  If~$h$ is not exotic then~$t_M = t_N \circ h^*$ by Lemma~\ref{C2.L1}.  Conversely, suppose that~$h$ is exotic: the~$PL$ normal invariant of~$h$ is a non-zero element~$x \in H^3(M; \Z/2)$.  By Lemma~\ref{C2.L3}~\eqref{C2.L3.1}, we have~$[h] = [p(x)] \in \mathcal{S}^{PL}(M)$ and so by definition there is a~$PL$ homeomorphism~$d \colon N \to M$ such that~$h \simeq p(x) \circ d$.  It follows for all~$E \in \Bun(M)$ that
\[ t_N(h^*E) = t_N(d^*p(x)^*E) = t_M(p(x)^*E) = t_M(E) + \bigl( x \smile c_2(E) \bigr) [M]_2 \; .\]
Here we used Lemma~\ref{C2.L1} for the second equality and Lemma~\ref{C2.L3}~\eqref{C2.L3.2} for third equality.  Finally, note that~$p(x)$ has a trivial topological normal invariant and so is homotopic to an exotic self-homeomorphism~$h \colon M \cong M$.  This completes the proof of Theorem~\ref{C2.T1}.
\end{proof}

\begin{proof}[Proof of Lemma~\ref{C2.L3}]
We begin with the definition of~$p(x)$ and the proof of part~\eqref{C2.L3.1}.  Recall that~$M^\bullet := M - {\rm int}(D^7)$ and that~$M^\bullet$ is homotopy equivalent to the Moore space~$M(H, 3)$ where~$H = H_3(M) \cong \oplus_{i=1}^r C_{k(r)}$ is a sum of cyclic groups.  We assume that~$k(r)$ is even if and only if~$i \in \{ 1, \dots, s \}$.  

The self-equivalences~$p(x)\colon M \to M$ we construct will be {\em pinch maps}.  Pinch maps are described in~\cite[\S 4]{MTW} for manifolds with boundary but to begin we shall consider the closed case.  Take an element~$\psi \in \pi_7(M^\bullet)$ and define~$p(\psi)$ to be the composition
\[ p(\psi) := ({\rm Id} \vee \psi ) \circ p \colon M \longrightarrow  M \vee S^7 \longrightarrow  M \]
where~$p \colon M \to M \vee S^7$ collapses the boundary of a 7-disc~$D^7 \subset M$ to a point and we identify~$M = M/D^7$.

To build pinch maps~$p(\psi)$ we shall need some knowledge of~$\pi_7(M^\bullet)$: recall that~$C \colon M^\bullet \to \bigvee_{i=1}^rS^4_i$ collapses the $3$-skeleton and that~$\pi_7(S^4_i)  \cong \Z \oplus \Z/12$.  Observe also that~$M^\bullet = S ( M(H, 2) )$ is the suspension of the degree two Moore space.

\begin{Lemma} \label{C2.L4}
For~$i = 1, \dots, s$ there are homotopy classes~$\varphi_i \in \pi_6(M(H, 2))$ such that the suspensions~$\psi_i \colon = S(\varphi_i) \in \pi_7(M^\bullet)$ satisfy
\[ C_*(\psi_i) = (0, 6) \in \Z\times\Z/12\Z\cong\pi_7(S^4_i) \subset \pi_7\biggl(\bigvee_{i=1}^rS^4_i\biggr) \; . \]
\end{Lemma}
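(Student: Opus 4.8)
The plan is to reduce everything to a single cyclic summand and then to build the required classes by a coextension of the Hopf map. Writing $H=\bigoplus_{i=1}^r C_{k(i)}$, the Moore space splits as a wedge $M(H,2)\simeq\bigvee_{i=1}^r M(\Z/k(i),2)$ with $M(\Z/k,2)=S^2\cup_k e^3$, and under the desuspended collapse map $C'\colon M(H,2)\to\bigvee_i S^3_i$ (so that $C=SC'$) each wedge summand maps by the pinch $C'\colon M(\Z/k(i),2)\to S^3_i$ onto its top cell. Since the suspension $S\colon\pi_6(S^3)\to\pi_7(S^4)$ is injective with image the torsion summand $\Z/12\subset\Z\oplus\Z/12\cong\pi_7(S^4)$ (by the $EHP$ sequence its image is $\ker H$, where $H$ is the Hopf invariant, so a suspension has vanishing free coordinate), it suffices for each even $k=k(i)$ with $i\le s$ to construct $\varphi_i\in\pi_6(M(\Z/k,2))$ supported on the $i$-th summand with $C'_*\varphi_i=6\nu'$, where $\nu'$ generates $\pi_6(S^3)\cong\Z/12$. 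Then $\psi_i=S\varphi_i$ satisfies $C_*\psi_i=S(6\nu')=(0,6)\in\pi_7(S^4_i)$, and being concentrated on the $i$-th summand its image vanishes in the other factors.

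Fix such an even $k$ and let $j'\colon S^2\hookrightarrow M(\Z/k,2)$ be the inclusion of the bottom cell. I would build $\varphi$ as the composite $S^6\xrightarrow{\eta^2}S^4\xrightarrow{\tilde\eta}M(\Z/k,2)$, where $\eta^2\colon S^6\to S^4$ is the square of the Hopf map and $\tilde\eta\colon S^4\to M(\Z/k,2)$ is a coextension of the Hopf map $\eta\colon S^4\to S^3$ through the pinch map, that is $C'\circ\tilde\eta\simeq\eta$. Granting such a $\tilde\eta$ one has $C'_*\varphi=\eta\circ\eta^2=\eta^3\in\pi_6(S^3)$. Now $\eta^3$ equals $6\nu'$: its suspension $S\eta^3\in\pi_7(S^4)$ is a suspension, hence lies in the torsion summand $\Z/12$, is nonzero (stably $\eta^3=12\nu\neq 0$ in $\pi_3^S\cong\Z/24$) and of order $2$, so $S\eta^3=6\,S\nu'=S(6\nu')$, and injectivity of $S$ gives $\eta^3=6\nu'$ in $\pi_6(S^3)$. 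Hence $C'_*\varphi=6\nu'$. Moreover $C'_*\varphi$ is independent of the choice of coextension: two coextensions differ by an element of the image of $j'_*\colon\pi_4(S^2)\to\pi_4(M(\Z/k,2))$, the contribution of the bottom cell, and $C'\circ j'\simeq *$, so this ambiguity is annihilated by $C'_*$.

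The main obstacle is the existence of the coextension $\tilde\eta$, and this is precisely where the parity of $k$ enters. In the Barratt--Puppe sequence $S^2\xrightarrow{k}S^2\xrightarrow{j'}M(\Z/k,2)\xrightarrow{C'}S^3\xrightarrow{k}S^3$, a necessary condition for $\eta\in\pi_4(S^3)$ to lift through $C'$ is that $S^4\xrightarrow{\eta}S^3\xrightarrow{k}S^3$ be nullhomotopic; since $\pi_4(S^3)\cong\Z/2$ this holds exactly when $k$ is even and fails for $k$ odd, which is consistent with the lemma asserting the classes $\psi_i$ only for $i\le s$. I would then upgrade this necessary condition to an actual coextension by Toda's coextension construction for the two-cell complex $M(\Z/k,2)$: a chosen nullhomotopy of $k\circ\eta$ determines an extension over the cone on $S^4$ whose restriction is the desired $\tilde\eta$ with $C'\circ\tilde\eta\simeq\eta$. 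The technical heart of the argument is to confirm that this coextension genuinely exists in the present low-dimensional range, equivalently that the relevant secondary obstruction vanishes; once this is in place, the remaining ingredients (the wedge splitting, the suspension isomorphism onto the torsion summand of $\pi_7(S^4)$, and the stable identification $\eta^3=6\nu'$) are formal.
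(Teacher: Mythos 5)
Your overall architecture is sound and genuinely different from the paper's: you reduce to a single cyclic summand and try to exhibit $\varphi_i$ explicitly as a composite $\tilde\eta\circ\eta^2$, whereas the paper never constructs $\varphi_i$ by a formula but locates it via the exact sequences of the pairs $(M(H,2),\bigvee S^2)$ and $(M^\bullet,\bigvee S^3)$, the relative classes $X_{3,i}\circ\varPhi$, $X_{4,i}\circ\Psi$, and the suspension $S\colon\pi_5(S^2)\to\pi_6(S^3)$, $\varPhi\mapsto 6\Psi$. Your peripheral steps are correct: $\eta^3=6\nu'$ in $\pi_6(S^3)$ (by injectivity of stabilisation $\Z/12\hookrightarrow\Z/24$ and $\eta^3=12\nu$ stably), suspensions land in the torsion summand of $\pi_7(S^4)$, and the wedge reduction is harmless.

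The gap is exactly where you flagged it, and it is worse than a missing verification: your proposed mechanism for producing $\tilde\eta\in\pi_4(S^2\cup_k e^3)$ does not work. The Barratt--Puppe sequence is exact for $[-,Y]$, not for $[S^4,-]$, and in this range the discrepancy is real. Toda's coextension of $\eta=S\eta'$ requires a nullhomotopy of the composite of the \emph{desuspended} class with the \emph{attaching map}, i.e.\ of $(\deg k)\circ\eta'\colon S^3\to S^2\to S^2$; but post-composition with a degree-$k$ self-map of $S^2$ multiplies Hopf invariants by $k^2$, so $(\deg k)\circ\eta'=k^2\eta'\neq 0$ in $\pi_3(S^2)\cong\Z$ for every $k\neq 0$ --- there is no nullhomotopy to feed into the construction. (The nullhomotopy you actually have, of $k\cdot\eta\in\pi_4(S^3)$, lives one dimension too high and only yields the \emph{necessity} of $k$ even.) The lift of $\eta$ to $\pi_4(S^2\cup_k e^3)$ does exist for $k$ even, but one must correct the obvious candidate by a relative Whitehead product: in $\pi_4(S^2\cup_k e^3,S^2)$ the class $X_3\circ\tilde\eta-\tfrac k2[X_3,\iota_2]$ has trivial boundary (since $\del(X_3\circ\tilde\eta)=k^2\eta'$ and $\del[X_3,\iota_2]=k[\iota_2,\iota_2]=2k\eta'$) and still maps to $\eta$ in $\pi_4(S^3)$; equivalently one can quote $\pi_4(\Sigma\R P^2)\cong\Z/4$ with the pinch map surjecting onto $\pi_4(S^3)$. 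A cleaner repair that avoids $\tilde\eta$ altogether is to coextend $\eta^3=S(\eta'\eta^2)$ directly: here the obstruction $(\deg k)\circ(\eta'\eta^2)=k^2\cdot(\eta'\eta^2)$ lives in $\pi_5(S^2)\cong\Z/2$, where $k^2\equiv k\pmod 2$ genuinely vanishes for $k$ even --- which is precisely the computation the paper performs with the class $X_{3,i}\circ\varPhi$.
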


\begin{proof}
The stabilisation homomorphism~$S: \pi_6(M(H, 2) \to \pi_7(M^\bullet)$ fits into the following commutative diagram of exact sequences
\footnote{The lower exact sequence was comprehensively studied for~$r = 1$ in~\cite{S}. }.
%
\[
  \begin{CD}
      \pi_6\Bigl(\bigvee\limits_{i=1}^r S^2\Bigr)&\;\longrightarrow\;&\pi_6(M(H, 2))&\;\stackrel{j_*}\longrightarrow\;&\pi_6\Bigl(M(H,2), \bigvee\limits_{i=1}^rS^2\Bigr)&\;\stackrel{\del}\longrightarrow\;& \pi_5\Bigl(\bigvee\limits_{i=1}^r S^2_i\Bigr)\\
      @VVSV@VVSV@VVSV@VVSV\\
    \pi_7\Bigl(\bigvee\limits_{i=1}^r S^3\Bigr)&\;\longrightarrow\;&\pi_{7}(M^\bullet)&\;\stackrel{j_*}\longrightarrow\;&\pi_7\Bigl(M^\bullet, \bigvee\limits_{i=1}^rS^3\Bigr)&\;\stackrel{\del}\longrightarrow\;& \pi_6\Bigl(\bigvee\limits_{i=1}^r S^3_i\Bigr)\\
  \end{CD}
  \]
%
%
Let~$X_{4, i}$ generate the homotopy group~$\pi_4(S^3_i \cup_{k(i)} e^4_i, S^3) \cong \Z$, and let~$\Psi$ generate~$\pi_7(D^4, S^3) \cong \pi_6(S^3)\cong \Z/12$.
On classes of the form~$X_{4, i} \circ \Psi$, the boundary map~$\del$ is given by multiplication by~$k(i)$.  An analogous statement holds for~$X_{3, i}$, a generator of~$\pi_3(S^2_i \cup_{k(i)} e^3_i, S^2_i) \cong \Z$ and compositions~$X_{3, i} \circ \varPhi$ for~$\varPhi$ a generator of~$\pi_6(D^3, S^2) \cong \pi_5(S^2) \cong \Z/2$.  It is well known that the stabilisation map~$S \colon \pi_5(S^2) \to \pi_6(S^3)$ maps~$\varPhi$ to~$6 \Psi$.  It follows for~$i \leq s$, that the classes~$X_{3, i} \circ \varPhi$ and~$X_{4, i} \circ (6 \Psi)$ are in the image of the maps~$j_*$ and that~$S(X_{3, i} \circ \varPhi) = X_{4,i} \circ (6 \Psi)$.  We choose~$\varphi_i \in \pi_6(M(H, 2))$ such that
\[ j_*(\varphi_i) = X_{3, i} \circ \varPhi \]
and define
\begin{equation} \label{C2.E3a}
 \psi_i := S(\varphi_i)  \in \pi_7(M^\bullet)
 \end{equation}
so that in particular~$j_*(\psi_i) = X_{4, i} \circ (6 \Psi)$.  

To see that the homotopy classes~$\psi_i$ have the stated property, we pass to stable homotopy groups: denoted~$\pi_*^S$ with stabilisation~$S: \pi_* \to \pi_*^S$.  By excision we have an isomorphism
\[\pi_7^S\biggl(M^\bullet, \bigvee_{i=1}^rS^3_i\biggr) \cong \pi_7^S\biggl(\bigvee_{i=1}^rS^4_i\biggr) \]
and since~$X_{4, i}$ stabilises to generate~$\pi^S_4(S^3_i \cup_{k(i)} e^4_i, S^3_i) \cong \Z$ we quickly deduce that~$C_*S(\psi_i)$ is the element of order two in~$\pi_7^S(S^4_i)$.  The stabilisation map~$\pi_6(S^3) \to \pi_3^S$ is isomorphic to the inclusion~$\Z/12 \to \Z/24$ and this completes the proof.
\end{proof}

\begin{Definition} \label{C2.D1}
Using the homotopy equivalence $M^\bullet \simeq \bigvee_{i=1}^rM(\Z/k(i), 3)$ define generators $\{ x_1, \dots, x_s \}$ of $H^3(M; \Z/2)$ where each $x_i$ is the pullback of the generator of $H^3(M(\Z/k(i)); \Z/2) = \Z/2$.  Given~$x \in H^3(M; \Z/2)$ write $x = \Sigma_{i=1}^s \epsilon_i x_i$ where $\epsilon_i = 0$ or $1$ and define
\[ p(x) := p(\Sigma_{i=1}^s \epsilon_i \psi_i) \]
with~$\psi_i \in \pi_7(M^\bullet)$ defined as in \eqref{C2.E3a}.
\end{Definition}

To complete the proof of part \eqref{C2.L3.1} of Lemma~\ref{C2.L3} it remains to show that~$\eta^{PL}(p(x)) = x \in H^3(M; \Z/2) \subset [M, G/PL]$.  We shall apply the methods and results of~\cite{MTW} which require us to consider {\em tangential surgery theory}.  We shall not give the details here but rather refer to~\cite[\S 2]{MTW}.  Another feature of~\cite{MTW} is that they work with manifolds with boundary but this is fine in our setting: there are obvious maps~$\mathcal{S}^{\Cat}(M) \to \mathcal{S}^{\Cat}(M^\bullet)$ which in our case are bijections.  We shall also write~$p(\psi_i) \colon M^\bullet \to M^\bullet$ for the corresponding pinch map on~$M^\bullet$.

Choose a bundle isomorphism~$b(\psi_i) \colon \nu_{M^\bullet} \cong \nu_{M^\bullet}$ covering~$p(\psi_i)$ such that the pair~$(p(\psi_i), b(\psi_i))$ defines an element in the {\em tangential structure set} of~$M$, $\mathcal{S}^{\Cat, t}(M)$.  The tangential normal invariant set is in bijective correspondence with~$[M, G]$ and the obvious forgetful map fits into the following commutative square (see~\cite[(2.4)]{MTW}).
\begin{equation*}
  \begin{CD}
   \mathcal{S}^{\Cat, t}(M^\bullet) @>F>> \mathcal{S}^{\Cat}(M^\bullet)\\
    @VV\eta^{\Cat, t}V@VV\eta^{\Cat}V\\
    [M^\bullet, G]@>>>[M^\bullet, G/\Cat]
  \end{CD}
\end{equation*}
Moreover, for~$\Cat = PL$ we have that~$[M^\bullet, G] \to [M^\bullet, G/PL]$ is isomorphic to the boundary map~$\beta_{24} \colon H^3(M; \Z/24) \to H^4(M; \Z)$ in the cohomology Bockstein sequence for~$\Z \stackrel{\times 24}{\to} \Z \to \Z/24$.  It is a simple matter to check that these facts and the following lemma complete the proof of part~\eqref{C2.L3.1} of Lemma~\ref{C2.L3}.

\begin{Lemma} \label{C2.L5}
The tangential normal invariant of~$(p(\psi_i), b(\psi_i))$ is independent of~$b(\psi_i)$ and given by the equation
\[ \eta^{PL, t}(p(\psi_i), b(\psi_i)) = \iota(x_i) \in H^3(M; \Z/24) \cong [M^\bullet, G] \]
where  $x_i$ is as in Defintion \ref{C2.D1} and~$\iota \colon H^3(M; \Z/2) \to H^3(M; \Z/24)$ is induced by the inclusion of coefficients~$\Z/2 \hookrightarrow \Z/24$.
\end{Lemma}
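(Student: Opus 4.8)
The plan is to feed the explicit pinch maps $p(\psi_i)$ into the tangential-surgery machinery of~\cite{MTW}, thereby reducing the statement to the stable-homotopy computation already set up in Lemma~\ref{C2.L4}. Throughout I work with the identification $[M^\bullet, G] \cong H^3(M; \Z/24)$ from Lemma~\ref{C2.L2}, in which the coefficient group $\Z/24 \cong \pi_3^S$ is carried by the $3$-cells of $M^\bullet \simeq \bigvee_{i=1}^r (S^3_i \cup_{k(i)} e^4_i)$, and I keep in mind that $H^4(M) \cong H_3(M) = H$ by Poincaré duality on the closed $7$-manifold $M$.

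First I would settle the independence of the normal invariant from the bundle isomorphism $b(\psi_i)$. For a fixed underlying map $p(\psi_i)$, two admissible choices of $b(\psi_i)$ differ by a stable self-isomorphism of $\nu_{M^\bullet}$ over the identity, that is, by a class in $[M^\bullet, \mathrm{PL}]$, whose effect on $\eta^{PL,t}$ is through the image of $[M^\bullet, \mathrm{PL}] \to [M^\bullet, G]$. Since $M$ is $2$-connected, $M^\bullet \simeq M(H,3)$ is $2$-connected with $H$ finite, so $H^3(M^\bullet; \pi_3 \mathrm{PL}) = \Hom(H, \Z) = 0$ and $H^4(M^\bullet; \pi_4 \mathrm{PL}) = 0$ (using $\pi_3 \mathrm{PL} \cong \Z$ and $\pi_4 \mathrm{PL} = 0$, as $\mathrm{PL}/\mathrm{O}$ is $6$-connected). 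Hence the relevant part of $[M^\bullet, \mathrm{PL}]$ vanishes and $\eta^{PL,t}(p(\psi_i), b(\psi_i))$ depends on $\psi_i$ alone.

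Next I would invoke the pinch-map formula of~\cite[\S 4]{MTW}, which expresses $\eta^{PL,t}(p(\psi_i))$ as the image of $\psi_i$ under a natural homomorphism $\pi_7(M^\bullet) \to \pi_7^S(M^\bullet) \xrightarrow{\ \cong\ } [M^\bullet, G]$; the first arrow is stabilisation and the second is the Atiyah duality isomorphism for the closed manifold $M$, which accounts for the degree shift by interchanging the $4$-cells carrying $\psi_i$ with the $3$-cells carrying the normal invariant, compatibly with $H^4(M) \cong H_3(M)$. By naturality it then suffices to evaluate summand by summand, and here Lemma~\ref{C2.L4} does the work: one has $j_*(\psi_i) = X_{4,i} \circ (6\Psi)$ with $6\Psi$ the unique element of order $2$ in $\pi_6(S^3) \cong \Z/12$, and $C_*(\psi_i) = (0,6) \in \Z \oplus \Z/12$. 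The unstable Hopf-invariant summand $0 \in \Z$ dies under stabilisation, while the stable summand $6 \in \Z/12$ maps to $12 \in \Z/24$ under the inclusion $\pi_6(S^3) \to \pi_3^S$, i.e.\ to the unique element of order $2$. Under $[M^\bullet, G] \cong H^3(M; \Z/24)$ this order-$2$ class on the $i$-th cell is precisely $\iota(x_i)$, which is the asserted formula.

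The main obstacle is this middle step: making the duality isomorphism in the \cite{MTW} pinch-map formula fully explicit and verifying that it carries the $4$-cell datum $X_{4,i} \circ (6\Psi)$ to the $3$-cell coefficient obtained by stabilising $6\Psi$, with correct sign and with the unstable Hopf-invariant summand genuinely dropping out. Once the passage between the unstable groups $\pi_7(S^4_i) \cong \Z \oplus \Z/12$ and the stable target $\pi_3^S \cong \Z/24$ is shown to be intertwined with the duality in this way, the identification with $\iota(x_i)$ follows at once from Lemma~\ref{C2.L4}.
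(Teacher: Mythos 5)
Your argument follows the paper's proof in all essentials: both reduce the tangential normal invariant of the pinch map to the Spanier--Whitehead dual of the stabilised class $S(\psi_i)\in\pi_7^S(M^\bullet)$ via \cite[\S 4]{MTW} and then read off the order-two element in $\pi_7^S(S^4_i)\cong\Z/24$ from Lemma~\ref{C2.L4}. The only point worth adding is the one the paper makes explicit when invoking \cite[Theorem 4.7 and Lemma 4.8]{MTW}: the passage from $\eta^{PL,t}(p(\psi_i),b(\psi_i))$ to $D(S(\psi_i))$ uses that $M^\bullet=\Sigma M(H,2)$ is a suspension (so that $T(\nu_{M^\bullet})/T(\nu_{M^\bullet}|\ast)\simeq\Sigma^k M^\bullet$) and that $\psi_i=S(\varphi_i)$ is itself a suspended class, which is precisely why the $\varphi_i$ were constructed in Lemma~\ref{C2.L4}.
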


\begin{proof}
The tangential normal invariant of~$(p, b) := (p(\psi_i), b(\psi_i))$ is computed as follows: first use the Pontrjagin-Thom isomorphism to obtain an element in the stable homotopy group of the pair~$(T(\nu_{M^\bullet}), T(\nu_{M^\bullet}|\ast))$ where~$T(\xi)$ denotes the Thom space of a vector bundle which we assume has rank~$k > 8$ and~$\ast \in \del M^\bullet$ is a base-point in the boundary of~$M^\bullet$.  We write:
\[ PT(p, b) \in \pi_{7+k}(T(\nu_{M^\bullet}), T(\nu_{M^\bullet}|\ast))\; . \]
Next one applies Spanier-Whitehead duality 
\[ D \colon \pi_{7+k}(T(\nu_{M^\bullet}), T(\nu_{M^\bullet}|\ast)) \longrightarrow  [M, G]\] 
and defines~$\eta^{Cat, t}(p, b) := D(PT(p, b))$.

Now~$M^\bullet = \Sigma M(H, 2)$ is a suspension and so~$T(\nu_{M^\bullet})/T(\nu_{M^\bullet}|\ast) \simeq \Sigma^k M^\bullet$ (see the remark at the bottom of~\cite[p.\,469]{MTW}).  Moreover, the pinch map~$p(\psi_i)$ is defined using a suspension~$\psi_i = S(\varphi_i)$, $\varphi_i \in \pi_6(M(H, 2))$.  It follows by~\cite[Theorem 4.7 and Lemma 4.8]{MTW} that 
\[ \eta^{PL, t}(p(\psi_i), b(\psi_i)) = D(S(\psi_i)) \]
where~$S(\psi_i) \in \pi_7^S(M^\bullet)$ is the stabilisation of~$\psi_i$.  But by the proof of Lemma~\ref{C2.L4} we know that~$C_*S(\psi_i) \in \pi_7^S(\bigvee_{i=1}^r(S^4_i))$ is precisely the element of order two in the summand~$\pi_7^S(S^4_i) \cong \Z/24$.   Applying Spanier-Whitehead duality to this statement completes the proof.
\end{proof}

We now prove part~$(2)$ of Lemma~\ref{C2.L5}.  If~$x = 0$ then~$p(x) \simeq {\rm Id}$ and the statement is obvious.  Assume then that~$x \neq 0 \in H^3(M; \Z/2)$.  By choosing an appropriate set of generators for~$H_3(M)$ we may assume that~$x = x_1$ in the notation of Definition \ref{C2.D1}.  Let~$f \colon M \to BS^3$ classify~$E \in \Bun(M)$.  As~$p(x_1)$ is the pinch map on~$\psi_1 \in \pi_7(M^\bullet)$ the induced map~$p(x_1)^*$ of~$\Bun(M)$ is given by 
\begin{equation} \label{C2.E2}
 p(x_1)^*E= E \conn F_E 
\end{equation}
where~$F_E \in \Bun(S^7)$ is classified by the composition
\[ f|_{M^\bullet} \circ \psi_1 \colon S^7 \longrightarrow  M^\bullet \longrightarrow  BS^3 \; . \] 
But~$BS^3$ is 3-connected and so~$f|_{M^\bullet}$ 
factors through the collapse map $C$ of \eqref{C2.E1}:
\[ f|_{M^\bullet} \; = \; \bar f \circ C \colon M^\bullet \longrightarrow \bigvee_{i=1}^rS^4_i \longrightarrow BS^3 \; .\]
By definition~$\psi_1 \colon S^7 \to M^\bullet$ is such that~$C_*(\psi_1) = (0, 6) \in \pi_7(S^4_1) \cong \Z \oplus \Z_{12}$.  Recall that~$H \to \H P^\infty$ is the tautological bundle and so~$F_E = (f|_{M^\bullet} \circ \psi_1)^*H$ is determined by~$c_2(E)$ as follows:   
\begin{equation} \label{C2.E3}
 F_E = \left\{ \begin{array}{ll} 6 \in \pi_7(BS^3) \cong \Z/12\Z \, , & \text{ if~$c_2(E)[S^4_1]$ is odd\;,}\\ \\
0 \in \pi_7(BS^3) \cong \Z/12\Z \, , & \text{if~$c_2(E)[S^4_1]$ is even\;.} \end{array} \right. 
\end{equation}
%
Applying Theorem \ref{C1.T2}~\eqref{C1.T2.1} and Proposition~\ref{B3.P1}~\eqref{B3.P1.2} to \eqref{C2.E2} and \eqref{C2.E3} above we see that
\[ t_M(p(x_1)^* E) - t_M(E) = t_{S^7}(F_E) = \bigl( x_1 \smile c_2(E) \bigr)[M]_2 \in \Z/2 \subset \Q/\Z \;. \quad\qedhere\] 
\end{proof}

\subsection{Remarks on simply-connected 7-manifolds} \label{C3}
In this subsection we make some remarks about the role of the $t$-invariant in the classification of simply connected spin $7$-manifolds.  

\begin{Remark}\label{B1.R2}
  The invariant~$t_M$ is related to the Kreck-Stolz invariants~$s_2$ and~$s_3$ of~\cite{KS}.
  To begin let~$M$ be a closed spin $(4k-1)$-manifold as in~\ref{B2.D1} and
  let~$L\to M$ be a Hermitian line bundle with~$c_1(L)=a\in H^2(M)$.
  In analogy with~\eqref{B1.2},
  we define a characteristic class~$\ch'$ of complex line bundles such that
	$$\ch(L)=1+c_1(L)+c_1(L)^2\,\ch'(L)\;.$$
  Assume that~$L$ extends to~$\overline L \to W$,
  where~$W$ is a compact spin manifold with~$M=\partial W$.
  The class~$c_1(\overline L)^2\in H^4(W)$ lifts uniquely to~$\bar v\in H^4(W,M;\Q)$,
  so we can define
	$$s_M(a) : = \bigl(\widehat A(TW)\,\ch'(\overline L)\,\bar v\bigr)[W,M]\in\Q/\Z$$
  independent of the choices of~$W$ and~$\overline L$.
  If~$k = 2$ so that $M$ is $7$-dimensional and $H^2(M)\cong \Z$ with generator~$z$,
  then Kreck and Stolz considered~$s_2(M) :=s_M(z)$
  and~$s_3(M) := s_M(2z)$.

A Hermitian metric on~$L$ identifies the dual bundle~$L^*$ with the complex
conjugate of~$L$, and this identification is parallel with respect to
compatible Hermitian connections.
Thus the bundle 
\[ E := L \oplus L^* \]
carries a natural quaternionic structure
  with~$c_2(E)=-c_1(L)^2$. 
  In particular, we have
	$$\ch(E)=\ch(L)+\ch(L^*)\qquad\text{and}\qquad
	\ch'(E)=\ch'(L)+\ch'(L^*)$$
  and~$\bar v=-\bar c_2(E)$.
  Thus~$a_{k+1}t_M(E)=2s_M(a)$, or equivalently
	$$t_M(E)=a_k\,s_M(a) \; .$$
  Hence the~$t$-invariant generalises~the $s$-invariant if~$k$ is even.
  In particular for~$k=2$, $t_M$ generalises~$s_2$ and~$s_3$.
  There is an analogous argument comparing the intrinsic definitions
  of~$s_M$ and~$t_M$.
\end{Remark}

We may now unify the classification results of ~\cite{KS} and~\cite{Crow}.  Let~$N$ and~$M$ be closed smooth simply-connected spin $7$-manifolds and assume that~$\pi_2(N) \cong \pi_2(M)$ is either trivial or infinite cyclic.  In the latter case assume that~$H^4(N) \cong H^4(M)$ are finite cyclic groups generated by the square of a generator of~$H^2$ and extend the definition of the $t$-invariant to the~$\hat t$-invariant which is the pair
\[ \hat t_M = (t_M, A_M) \]
where~$A_M \colon H^2(M)\cong\Bun_{S^1}(M) \to \Bun(M)$ maps~$L$ to~$L \oplus L^*$ and~$t_M$ is the $t$-invariant.

\begin{Theorem} \label{C3.T1}
Let~$N$ and~$M$ be simply connected spin $7$-manifolds as above.  Then~$N$ is diffeomorphic to~$M$ if and only if~$\hat t_N$ is isomorphic to~$\hat t_M$ and~$\mu(N) = \mu(M)$.
\end{Theorem}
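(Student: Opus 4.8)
The plan is to prove the two implications separately and, for the harder one, to split according to whether $\pi_2(M)$ is trivial or infinite cyclic. The \emph{only if} direction is pure naturality: an orientation-preserving diffeomorphism $g\colon N\to M$ induces compatible isomorphisms $g^*$ on $H^2$, on $H^4$, and on $\Bun$, and since the $t$-invariant is natural under diffeomorphisms (the diffeomorphism case of Proposition~\ref{B3.P1}~\eqref{B3.P1.3}), the assignment $L\mapsto L\oplus L^*$ is natural, and $c_2$ is natural, the triple $(g^*|_{H^2},g^*|_{H^4},g^*|_{\Bun})$ exhibits an isomorphism $\hat t_N\cong\hat t_M$; moreover $\mu(N)=\mu(M)$ because $\mu$ is a diffeomorphism invariant.

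For the \emph{if} direction, first suppose $\pi_2(M)=0$. Since $\hat t_M$ is defined we have $H^3(M;\Q)=H^4(M;\Q)=0$, which together with $\pi_1=\pi_2=0$ makes $M$ and $N$ $2$-connected rational homology $7$-spheres with $H^2=0$. Then $\Bun_{S^1}$ is trivial, so $A_M$ and $A_N$ carry no information and an isomorphism of $\hat t$-invariants is exactly an isomorphism of $t$-invariants in the sense of Definition~\ref{B2.D1}. The assertion is then precisely Corollary~\ref{C1.C1}.

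The substantive case is $\pi_2(M)\cong\Z$, where $M$ and $N$ lie in the class of simply connected spin $7$-manifolds classified by Kreck and Stolz in~\cite{KS}: here $H^2\cong\Z\cdot z$, $H^3=0$, and $H^4$ is finite cyclic generated by $z^2$, so the hypotheses $H^3(M;\Q)=H^4(M;\Q)=0$ hold and the $t$-invariant is defined. The key link is Remark~\ref{B1.R2}: in dimension $7$ one has $a_k=1$, so $t_M(A_M(a))=t_M(L\oplus L^*)=s_M(c_1(L))$ for every $a=c_1(L)\in H^2(M)$, and in particular $s_2(M)=t_M(A_M(z))$ and $s_3(M)=t_M(A_M(2z))$. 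I would therefore argue that an isomorphism $\hat t_N\cong\hat t_M$ forces the Kreck--Stolz invariants to agree: the isomorphism carries a generator $z_M$ to $\pm z_N$, and since $A_M(z)=A_M(-z)$ (because $L\oplus L^*=L^*\oplus L$) the sign ambiguity is harmless, so $s_2(M)=s_2(N)$ and $s_3(M)=s_3(N)$; compatibility with $c_2$, recalling $c_2(A_M(z))=-z^2$, simultaneously matches $z_M^2$ with $z_N^2$ and hence the cohomology rings. Together with $s_1=\mu(M)=\mu(N)$, the classification theorem of~\cite{KS} then yields an orientation-preserving diffeomorphism $N\cong M$.

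The main obstacle I anticipate is the bookkeeping around the isomorphism of $\hat t$-invariants: one must check that the matching of $H^2$, $H^4$, and $\Bun$ supplied by $\hat t_N\cong\hat t_M$ is exactly the data needed to identify the generators and the ring structure underlying the Kreck--Stolz invariants, and that the residual sign ambiguity in the choice of $z$ is absorbed by the symmetry $L\leftrightarrow L^*$. A secondary point to verify is that the manifolds in the statement genuinely belong to the class to which the theorem of~\cite{KS} applies, i.e.\ that the cohomological hypotheses here ($H^3=0$ and $H^4$ cyclic on $z^2$) coincide with theirs.
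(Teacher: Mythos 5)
The paper states Theorem~\ref{C3.T1} without an explicit proof, leaving it as a consequence of Corollary~\ref{C1.C1} (for the $\pi_2=0$ case) and of Remark~\ref{B1.R2} together with the Kreck--Stolz classification (for the $\pi_2\cong\Z$ case); your proposal reconstructs exactly this intended argument, including the correct use of $t_M(L\oplus L^*)=s_M(c_1(L))$ with $a_2=1$ and the observation that the sign ambiguity in the generator of $H^2$ is absorbed by $L\leftrightarrow L^*$. The proof is correct and takes essentially the same approach the paper relies on.
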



\begin{Remark}\label{B1.R3}
Expanding on~\cite[Theorem 6]{Kreck}, Hepworth~\cite[Thereom 2.2.9]{Hep} gave a classification theorem for simply connected compact spin~$7$-manifolds
  with~$H^3(M)=0$,
  such that~$H^4(M)$ is torsion and generated by~$p_M = \frac{p_1}2(TM)$
  and cup products of elements of~$H^2(M)$.
  
  The group~$H^2(M)$ is necessarily free, so there exists a $\Z$-basis~$(x_1,\dots,x_r)$.
  Apart from the Eells-Kuiper invariant,
  and the triple Massey products,
  Hepworth defines five more families of invariants,
  $\sigma_i$, $\sigma_{ij}$, $\tau_i$, $\tau_{ij}$, and~$\tau_{ij,k}$
  for~$i$, $j$, $k\in\{1,\dots,r\}$,
  based on the existence of a closed spin manifold~$W$ with~$\partial W=M$ such that the inclusion~$M \to W$ induces an isomorphism~$H^2(W) \cong H^2(M)$.
  Together,
  these invariants give a complete set of diffeomorphism invariants.
  The $\sigma$- and $\tau$-invariants can be expressed in terms
  of the function~$s_M$ of Remark~\ref{B1.R2} and some Massey products.
  As above, we may use~$t_M(L\oplus L^*)$ instead of~$s_M(L)$.
  Note that Hepworth also uses the linking form,
  but the linking form can be recovered from the $t$-invariant just as in Theorem~\ref{C1.T2}.

If we now drop the assumption that ~$H^4(M)$ is generated by~$p_M$
  and products of elements of~$H^2(M)$, then 
  as in the highly connected case,
  the invariant~$t_M$ contains potentially more information than~$s_M$
  because one can define~$t_M(E)$ even if~$c_2(E)$ is not
  in the span of~$H^2(M) \smile H^2(M)$.
  It therefore seems reasonable to hope that simply connected spin $7$-manifolds~$M$ with~$H^3(M) = 0$ and~$H^4(M)$ finite can be classified via their Eells-Kuiper invariants, their Massey product structure, and their~$\hat t$-invariants.
\end{Remark}

\section{Bundles over \texorpdfstring{$\H P^k$}{HPk}} \label{D}

In this section we investigate the map~$c_2 \colon \Bun(\H P^k) \to H^4(\H P^k)$ using the $t$-invariant.  Determining the image of~$c_2$ is a difficult problem already investigated in~\cite{FG, GS} and elsewhere.  One calls an integer~$c$ {\em $k$-realisable} if there is a bundle~$E \in \Bun(\H P^k)$ such that
\[ c_2(E) = c \cdot c_2(H) \]
where we recall that~$H$ is the tautological bundle over~$\H P^k$.  Below we use the $t$-invariant to give a new proof of the necessary criteria of~\cite{FG} for $k$-realisability.

Consider the Hopf fibration~$p_k\colon S^{4k-1}\to\H P^{k-1}$
and let~$\bar p_k\colon W_k\to\H P^{k-1}$ denote the corresponding $D^4$-bundle
with boundary~$S^{4k-1}$.
Then
	$$\H P^k=W_k\cup_{S^{4k-1}}D^{4k}\;.$$
One may therefore consider the problem of building bundles over~$\H P^k$ inductively: assume that
$E_{k-1}$ is a quaternionic line bundle on~$\H P^{k-1}$,
then the pullback~$\bar p_k^*E_{k-1}$ on~$W$ extends to~$E_k\to\H P^k$
iff its restriction~$\bar p_k^*E_{k-1}|_{S^{4k-1}}=p_k^*E_{k-1}$ to the boundary
is trivial.  
If~$p_k^*E$ is trivial then the group~$\pi_{4k-1}(S^3)$ acts transitively on the set of possible extensions of~$E$ to~$\H P^k$.  The situation is summarised in the following commutative diagram.


$$\begin{tikzpicture}
    \node (E) at (0,2) {$E_{k-1}$} ;
    \node (Ebp) at (1.8, 3) {$\bar p_k^*E_{k-1}$} ;
    \node (Ep) at (3.6,4) {$p_k^*E_{k-1}$} ;
    \node (Ek) at (5.4,3) {$D^{4k}\times\H$} ;
    \node (W) at (1.8, 1) {$W_k$};
    \node (Hx) at (0,0) {$\H P^{k-1}$} ;
    \node (Sx) at (3.6,2){$S^{4k-1}$} ;
     \node (D) at (5.4, 1) {$D^{4k}$} ;
    \node (Hy) at (3.6, 0) {$\H P^k$} ;
\draw[->] (E) -- (Hx) ;
\draw[->] (Ebp) -- (W) ;
\draw[->>] (Ebp) -- (E) ;
\draw[->] (Ep) -- (Sx) ;
\draw[left hook->] (Ep) -- (Ebp) ;
\draw[right hook->] (Sx) -- (D) ;
\draw[left hook->] (Sx) -- (W) ;
\draw[left hook->] (D) -- (Hy) ;
\draw[->>] (W) -- node[below right] {$\scriptstyle \bar p_k$} (Hx) ;
  \draw[right hook->] (W) -- (Hy) ;
\draw[->] (Ek) -- (D) ;
\draw[dashed,right hook->] (Ep) -- (Ek) ;
\end{tikzpicture}$$

Determining whether $p_k^*E_{k-1}$ is trivial is a difficult problem in general.  However Theorem \ref{D1.T1} below determines the $t$-invariant of such pull-backs in terms of $c_2(E_{k-1})$.  For ease of computations with signs we first define~$a : = -c_2(H)$ to be the other generator of~$H^4(\H P^\infty) \cong \Z$ and for all $j$ we identify $H^4(\H P^j) = H^4(\H P^\infty)$ in the obvious way.

\begin{Theorem}\label{D1.T1}
  If~$k\ge 2$ and there exists a quaternionic line bundle~$E_c$ on~$\H P^{k-1}$
  with~$c_2(E_c)= -c \cdot a$ in~$H^4(\H P^{k-1})\cong\Z a$,
  then
  	$$t_{S^{4k-1}}\bigl(p_k^*E_c\bigr)
	=\frac{a_k}{(2k)!}\prod_{j=0}^{k-1}(c-j^2)
	\quad\in\quad\Q/\Z\;.$$
\end{Theorem}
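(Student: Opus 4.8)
The plan is to compute $t_{S^{4k-1}}(p_k^*E_c)$ extrinsically, using the disc bundle $W_k$ as a zero-bordism. First I note that $W_k=D(H)$, the unit disc bundle of the tautological bundle $H\to\H P^{k-1}$, is spin: on the zero section $TW_k=T\H P^{k-1}\oplus H_\R$, and both summands have vanishing $w_2$. Since $\partial W_k=S^{4k-1}$, the bundle $\bar p_k^*E_c$ restricts to $p_k^*E_c$, and $H^3(S^{4k-1};\Q)=H^4(S^{4k-1};\Q)=0$ for $k\ge2$, Proposition~\ref{B2.P1}\,\eqref{B2.P1.1} applies and gives $t_{S^{4k-1}}(p_k^*E_c)=\tau_{S^{4k-1}}(p_k^*E_c)$; thus I may work with the extrinsic formula~\eqref{B1.D1}. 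The one input needed is the lift $\bar c_2(\bar p_k^*E_c)\in H^4(W_k,S^{4k-1};\Q)$. Because $c_2(E_c)=c\,c_2(H)$ and the Thom class $\Phi\in H^4(W_k,S^{4k-1})$ of $H_\R$ restricts to the Euler class $e(H_\R)=c_2(H)$ on the zero section, the unique lift is $\bar c_2(\bar p_k^*E_c)=c\,\Phi$.

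Next I would push the relative evaluation in~\eqref{B1.D1} down to the zero section via the Thom isomorphism $(\pi^*\alpha\smile\Phi)[W_k,S^{4k-1}]=\alpha[\H P^{k-1}]$, using $TW_k|_{\H P^{k-1}}=T\H P^{k-1}\oplus H_\R$ and $\bar p_k^*E_c|_{\H P^{k-1}}=E_c$. This yields
\[
\tau_{S^{4k-1}}(p_k^*E_c)=\frac{-c}{a_{k+1}}\,\bigl(\widehat A(T\H P^{k-1})\,\widehat A(H_\R)\,\ch'(E_c)\bigr)_{4k-4}[\H P^{k-1}].
\]
The universal identity $\widehat A(H_\R)\,\ch'(H)=1$ from~\eqref{A3.1} rewrites the fibre factor as $\widehat A(H_\R)\,\ch'(E_c)=\ch'(E_c)/\ch'(H)$. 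Passing to Pontryagin/Chern roots and using $p(\H P^{k-1})=(1-u)^{2k}(1-4u)^{-1}$ with $u=c_2(H)$ (so that all roots are imaginary and the genera become trigonometric), the characteristic number above is, up to an orientation sign and an explicit power of $2$, equal to the coefficient $N(c):=[\theta^{2k-2}]\,\tilde G(\theta)$ of the generating function
\[
\tilde G(\theta)=\Bigl(\frac{\theta}{\sin\theta}\Bigr)^{2k-1}\cos\theta\;\frac{\sin^2(\sqrt c\,\theta)}{c\,\sin^2\theta}.
\]

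The heart of the proof is extracting $N(c)$. From the expansion of $\sin^2(\sqrt c\,\theta)/c$ one sees that $N(c)$ is a polynomial in $c$ of degree $k-1$, so it suffices to locate its roots and leading coefficient. The key claim is that $N(m^2)=0$ for every $m\in\{1,\dots,k-1\}$. Indeed, at $c=m^2$ the last factor of $\tilde G$ is $\sin^2(m\theta)/(m^2\sin^2\theta)=U_{m-1}(\cos\theta)^2/m^2$, where $U_{m-1}$ is the Chebyshev polynomial with $\sin(m\theta)=U_{m-1}(\cos\theta)\sin\theta$; this is an even polynomial in $\cos\theta$. Writing $N(m^2)=\operatorname{Res}_{\theta=0}\tilde G(\theta)\,\theta^{1-2k}\,d\theta$ and substituting $p=\sin\theta$ (so $\cos\theta\,d\theta=dp$ and $U_{m-1}(\cos\theta)^2=R(1-p^2)$ is a polynomial in $p$ of degree $2m-2<2k-2$) converts the residue into $\operatorname{Res}_{p=0}R(1-p^2)\,p^{1-2k}\,dp=0$. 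Hence $N(c)=\lambda\prod_{j=1}^{k-1}(c-j^2)$.

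Finally I would fix the constant $\lambda$ from the top power of $c$: the coefficient of $\theta^{2k}$ in $\sin^2(\sqrt c\,\theta)/c$ is $(-1)^{k+1}2^{2k-1}c^{k-1}/(2k)!$, and after the $\theta^{-2}$ from $1/\sin^2\theta$ and the rescaling powers of $2$ this contributes the factor $2/(2k)!$. Combining with the prefactor $-c/a_{k+1}$, restoring the missing factor $c=(c-0^2)$, and using $a_k a_{k+1}=2$ produces $\tau_{S^{4k-1}}(p_k^*E_c)=\frac{a_k}{(2k)!}\prod_{j=0}^{k-1}(c-j^2)$. I expect the main obstacle to be keeping the orientation conventions consistent throughout — in particular the sign of $\langle c_2(H)^{k-1},[\H P^{k-1}]\rangle$ and the compatibility of $[W_k,S^{4k-1}]$ with the Thom isomorphism — which I would calibrate once against the independently known value $t_{S^7}(E_c)=c(c-1)/24$ of Remark~\ref{B3.R3} at $k=2$. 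The genuinely substantive step is the vanishing $N(m^2)=0$, which pleasantly collapses to a degree count after the substitution $p=\sin\theta$.
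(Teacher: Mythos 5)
Your argument is correct and follows essentially the same route as the paper: both compute the invariant extrinsically over the disc bundle $W_k$, reduce the characteristic number to a residue via the substitution relating $\sin\theta$ (your $p$; the paper's $z=2\sinh(\sqrt a/2)$, the same change of variable in hyperbolic form) to the formal root of $c_2(H)$, and locate the roots of the resulting degree-$(k-1)$ polynomial in $c$ at $c=1^2,\dots,(k-1)^2$ by the same Chebyshev degree count (your $\sin(m\theta)=U_{m-1}(\cos\theta)\sin\theta$ versus the paper's $\cosh(2nx)=P_n(\sinh x)$ with $\deg P_n\le 2n$). The only divergences are presentational — you push the evaluation down to the zero section via the Thom isomorphism and fix the leading coefficient in $c$, whereas the paper works directly in $H^\bullet(W_k,S^{4k-1};\Q)\cong a\Q[a]/a^{k+1}\Q[a]$ and normalises by evaluating at $c=k^2$ — together with your deferred sign calibration, which does close up because the $k$-dependent signs (the $(-1)^{k+1}$ from the leading Taylor coefficient against $\langle c_2(H)^{k-1},[\H P^{k-1}]\rangle$) cancel, leaving a single $k$-independent sign that the $k=2$ case determines.
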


We defer the proof Theorem~\ref{D1.T1} to the end of the section. 

\begin{Corollary}[\cite{FG}{Theorem 1.1}] \label{D1.C1}
If an integer~$c$ is $k$-realisable then
  \begin{equation*}
    \frac{a_j}{(2j)!}\prod_{i=0}^{j-1}(c-i^2) = 0 \in\Q/\Z
	\qquad\text{for all }2\le j\le k\;.
  \end{equation*}
\end{Corollary}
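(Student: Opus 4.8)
The plan is to deduce the Corollary directly from Theorem~\ref{D1.T1} together with the triviality property of the $t$-invariant, exploiting the fact that $k$-realisability over~$\H P^k$ automatically furnishes compatible bundles on every quaternionic subspace~$\H P^{j-1}$ with~$j\le k$.

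First I would unwind the hypothesis.  Suppose~$c$ is $k$-realisable, so there is a bundle~$E\in\Bun(\H P^k)$ with~$c_2(E)=c\cdot c_2(H)=-c\cdot a$.  Fix~$j$ with~$2\le j\le k$ and set~$E_c:=E|_{\H P^{j-1}}$, the restriction along the inclusion~$\H P^{j-1}\into\H P^k$.  Since this inclusion induces an isomorphism on~$H^4$ (both groups being~$\Z a$), we have~$c_2(E_c)=-c\cdot a$ in~$H^4(\H P^{j-1})$, which is exactly the hypothesis of Theorem~\ref{D1.T1} with~$k$ replaced by~$j$.  That theorem then gives
\[
  t_{S^{4j-1}}\bigl(p_j^*E_c\bigr)
  =\frac{a_j}{(2j)!}\prod_{i=0}^{j-1}(c-i^2)
  \quad\in\quad\Q/\Z\;.
\]

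The second and crucial step is to show that the left-hand side vanishes.  Because~$E$ is defined on all of~$\H P^k$, its restriction~$E|_{\H P^j}$ is a bundle over~$\H P^j=W_j\cup_{S^{4j-1}}D^{4j}$ extending~$E_c=E|_{\H P^{j-1}}$.  By the extension criterion recorded in the discussion preceding Theorem~\ref{D1.T1}, a bundle on~$\H P^{j-1}$ extends over~$\H P^j$ if and only if its Hopf pullback~$p_j^*E_c$ is trivial on~$S^{4j-1}$; hence~$p_j^*E_c$ is trivial.  The triviality property of the $t$-invariant, Proposition~\ref{B3.P1}~\eqref{B3.P1.1}, then forces~$t_{S^{4j-1}}(p_j^*E_c)=0$.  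Combining this with the displayed formula yields~$\frac{a_j}{(2j)!}\prod_{i=0}^{j-1}(c-i^2)=0$ in~$\Q/\Z$, and letting~$j$ range over~$2\le j\le k$ completes the argument.

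I expect the only point requiring genuine care to be this second step: correctly matching the extension criterion to the triviality of~$p_j^*E_c$ and confirming that $k$-realisability really does propagate down to each intermediate~$\H P^{j-1}$, rather than any analytic subtlety --- once Theorem~\ref{D1.T1} is granted the remainder is formal.  A minor bookkeeping check is that the identification~$H^4(\H P^{j-1})\cong H^4(\H P^k)\cong\Z a$ is compatible with restriction, so that the integer~$c$ appearing in the formula is the same at every stage.
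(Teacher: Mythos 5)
Your proof is correct and follows essentially the same route as the paper: restrict the bundle to $\H P^{j-1}$, observe that its existence on $\H P^j$ forces $p_j^*E_c$ to be trivial on $S^{4j-1}$ so that $t_{S^{4j-1}}(p_j^*E_c)=0$, and compare with the formula of Theorem~\ref{D1.T1}. The paper's proof is just a more compressed version of the same argument.
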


\begin{proof}
Let~$E_c \in \Bun(\H P^k)$ be a bundle with~$c_2(E_c) =  c \cdot c_2(\tau) = -c \cdot a$.  Then for all~$2 \leq j \leq k$, the bundle~$E_c^j : = p_j^*(E_c|_{\H P^{j-1}})= E_c|_{S^{4j-1}}$ is trivial and in particular~$t_{S^{4j-1}}(E_c^j) = 0$.  Now apply Theorem~\ref{D1.T1}.
\end{proof}

\begin{Remark}\label{D1.R1}

 Feder and Gitler proved the that the conditions of Corollary~\ref{D1.C1} are necessary using different methods in~\cite{FG}.  They also gave a proof that the conditions are sufficient if~$k=\infty$ 
 \footnote{In~\cite{FG} Feder and Gitler quote Sullivan who credits this statement to unpublished work of I.~Bernstein, R.~Stong, L.~Smith and G.~Cooke.},
  where they are satisfied iff~$c$ is an odd square or zero.  Feder and Gitler strongly suggested that their conditions are also sufficient for finite~$k$, but so far, this has been proved only for~$k=2$, $3$, $4$, $5$ (see~\cite[\S 2]{GS} for more details on this issue).
  Moreover, the bundle~$E_c$ is not unique in general, however Gon\c calves and Spreafico~\cite[Theorem 3]{GS} proved that  the number of non-isomorphic
  bundles with the same second Chern class is finite and depends
  only on~$k$ and the parity of~$c$.  
\end{Remark}

\begin{Remark} \label{D1.R2}
Theorem~\ref{D1.T1} and Theorem~\ref{B3.T1} shed some light on the Feder-Gitler conditions: as we saw above, the {\em a priori\/} obstruction to extending a bundle~$E_c \to \H P^{j-1}$ to~$\H P^j$ is the bundle~$p_j^*E_c \in \Bun(S^{4j-1}) \cong \pi_{4j-2}(S^3)$.  We see that the claim that the Feder-Gitler conditions are sufficient for~$c$ to be $k$-realisable is equivalent to the following claim:  For each~$2 \leq j \leq k$ such that~$c$ is $(j-1)$-realisable, the set 
\[ t_{S^{4j-1}}^{-1}(0) \; \cap \; p_j^* \bigl( c_2^{-1} ( c \cdot c_2(H)) \bigr) \subset \Bun(S^{4j-1}) \]
contains the trivial bundle $S^{4j-1} \times \H$.  By Theorem \ref{D1.T1}, Remark \ref{B3.R3} and Corollary~\ref{B3.C1} this is true if~$k = 2$ or~$3$ but becomes surprising if $k > 3$.  By Proposition~\ref{B3.P1} the $t$-invariant measures only minus the $e$-invariant of the stabilisation~$S(p_j^*E_c) \in \pi_{4k-5}^S$ and for $k > 3$ the stabilisation homomorphism~$S \colon \pi_{4k-2}(S^3) \to \pi_{4k-5}^S$ and the $e$-invariant~$e \colon \pi_{4k-5}^S \to \Q/\Z$ both typically have non-trivial kernels.
\end{Remark} 

We next discuss Theorem~\ref{D1.T1} for~$k = 2, 3$ and~$4$.

\begin{Example}\label{D1.K2}
  For~$k=2$,
  we recover a special case of Proposition~\ref{S4.T1}.
  In particular,
  there exist quaternionic line bundles~$E_c$ on~$\H P^1=S^4$
  with~$c_2(E_c)= - ~c\cdot~a$
  for all~$c\in\Z$,
  and we have
	$$t_{S^7}\bigl(p_2^*E_c\bigr)=\frac{c(c-1)}{24}
	\quad\in\quad\frac1{12}\Z\bigm/\Z\;.$$
	This confirms Theorem~\ref{C1.T2} (1) and Remark~\ref{B3.R3} stating that~$t_{S^7}$ is injective.
\end{Example}

\begin{Example}\label{D1.K3}
  A bundle~$p_2^*E_c$ on~$S^7$ is trivial iff~$c(c-1)\equiv 0$ mod~$24$.
  This is the case iff
  \begin{equation*}
    c\equiv 0,1\mod 8\qquad\text{and}\qquad c\equiv 0,1\mod 3\;.
  \end{equation*}
  Because~$\pi_7(S^3)\cong\Z_2$,
  there exist at most two extensions of~$E_c$ to~$\H P^2$
  up to isomorphism.
  We pick one for each~$c$ and still denote it by~$E_c$.

  By Theorem~\ref{D1.T1},
	$$t_{S^{11}}\bigl(p_3^*E_c\bigr)
	=\frac{2c(c-1)(c-4)}{720}
	=\frac{c(c-1)}{24}\cdot\frac{c-4}{15}
	\quad\in\quad\frac1{15}\Z\bigm/\Z\;,$$
  because the first factor is always an integer.
  Because~$E_c$ exists on~$\H P^2$ for~$c=33$ and
	$$t_{S^{11}}\bigl(p_3^*E_{3}\bigr)=\frac1{15}\;,$$
  we conclude from Proposition~\ref{B3.P1}~\eqref{B3.P1.2}
  that the $t$-invariant again gives an isomorphism
	$$ t_{S^{11}}\colon\Bun(S^{11})\cong\pi_{10}(S^3)
	\longrightarrow\Z/15\Z\;.$$
	This confirms Corollary~\ref{B3.C1} stating that~$t_{S^{11}}$ is injective.

\end{Example}

\begin{Example}\label{D1.K4}
  A bundle~$p_3^*E_c$ exists and is trivial on~$S^{11}$ iff
  \begin{equation*}
	c\equiv 0,1\mod 8\;,\qquad
	c\equiv 0,1,4,7\mod 9\;,
	\qquad\text{and}\qquad c\equiv 0,1,4\mod 5\;.
  \end{equation*}
  In this case,
  it extends to~$\H P^3$.
  Note that because~$\pi_{11}(S^3)\cong\Z/2\Z$
  and because there were possibly two non-isomorphic bundles~$E_c$ on~$\H P^2$
  to start with,
  there can be up to four quaternionic line bundles on~$\H P^3$
  with second Chern class~$-c\cdot~a\in H^4(\H P^3)$.
  We pick one and again denote it by~$E_c$.

  By Theorem~\ref{D1.T1},
	$$t_{S^{15}}\bigl(p_4^*E_c\bigr)
	=\frac{c(c-1)(c-4)(c-9)}{2^7\cdot 3^2\cdot 5\cdot 7}
	\quad\in\quad\frac1{28}\Z\bigm/\Z\;,$$
  because the numerator is always divisible by~$2^5\cdot 3^2\cdot 5$
  for all~$c\in\Z$ that can occur.
  For~$c=40$, we have
	$$t_{S^{15}}\bigl(p_4^*E_{40}\bigr)=\frac5{28}\;,$$
  which generates~$\frac1{28}\Z\bigm/\Z$.
  
  Now by~\cite[(7.14), Theorem 13.9, (13.6)']{T} the stabilisation homomorphism~$S \colon \pi_{14}(S^3) \to \pi_{11}^S$ is isomorphic to the homomorphism
\[ \Z/84 \oplus \Z/2 \oplus \Z/2 \stackrel{(\times 36, 0, 0)}{\xrightarrow{\hspace*{1.25cm}}} \Z/504\;,  \]
and by~\cite{A} the $e$-invariant~$e \colon \pi_{11}^S \to \Q/\Z$ is injective.  Moreover, by Remark~\ref{D1.R1} the condition~$c(c-1)(c-4)(c-9)\equiv 0$ mod~$\frac{8!}2$ is
  sufficient for the existence of a bundle~$E_c$ on~$\H P^4$,
  hence for the existence of a bundle~$p_4^*E_c$ on~$S^{15}$
  that is trivial.
  Thus the $3$-torsion component in~$\Bun(S^{15})$
  cannot be generated by bundles of the type~$p_4^*E_c$.
    \end{Example}


\begin{proof}[Proof of Theorem~\ref{D1.T1}]
  We compute~$t_{S^{4k-1}}$ using the zero bordism~$W_k$
  as in Definition~\eqref{B1.D1}.
  Using the homotopy equivalence~$\bar p_k \colon W_k \to \H P^k$
  to identify~$H^4(W_k) = H^4(\H P^{k-1})$ and the Thom isomorphism,
  we know that the generator~$a \in H^4(W_k)$
  lifts to a generator of~$H^4(W_k,S^{4k-1})\cong a\Z[a]/a^{k+1}\Z[a]$,
  In particular
	$$\bar c_2( E_c )\,\ch'(E_c)=2-2\,\cosh\bigl(\sqrt{ca}\bigr)
	\quad\in\quad H^\bullet(W_k,S^{4k-1};\Q)\cong a\Q[a]/a^{k+1}\Q[a]\;.$$

  We write~$H_{\R}$ for~$H$ regarded as a real vector bundle.  For the total Pontrjagin class of~$H_{\R}$
we obtain
	$$p(H_{\R})=p(H)^2=(1+a)^2\;.$$
Now let~$\End_\H(H)$ denote the real $4$-plane bundle of quaternionic bundles maps from~$H$ to itself.  The Pontrjagin classes and Euler class of~$\End_\H(H)$ are given by
	$$p_1(\End_\H(H))=4a
		\qquad\text{and}\qquad
	e(\End_\H(H))=p_2(\End_\H(H))=0\;.$$

The tangent bundle of~$\H P^k$ satisfies the relation
	$$T\H P^k\oplus \End_\H(H) \cong H^{\oplus(k+1)}\;.$$
\noindent
In particular, we have
\begin{align*}
	\widehat A(T\H P^k)
	&=\widehat A(H)^{k+1}\smile\widehat A(\End_\H(H))^{-1}\\
	&=\biggl(\frac{\sqrt a/2}{\sinh(\sqrt a/2)}\biggr)^{2k+2}
	\,\biggl(\frac{\sqrt a}{\sinh\sqrt a}\biggr)^{-1}\\
	&=\biggl(\frac{\sqrt a/2}{\sinh(\sqrt a/2)}\biggr)^{2k+1}
	\,\cosh\bigl(\sqrt a/2\bigr) \\
	&\in\quad H^\bullet(\H P^k;\Q)\cong\Q[a]/a^{k+1}\Q[a]\;,
\end{align*}
and we know that~$\widehat A(W_k)=\widehat A(T\H P^k)|_{W_k}$.

  Let~$\gamma$ be a sufficiently small contour around~$0$ in~$\C$,
  then~$\gamma^2$ goes around~$0$ twice.
  Note that~$a_k\,a_{k+1}=2$ for all~$k$.
  From the formula for~$\widehat A(T\H P^k)$ above,
  we compute
  \begin{align*}
    t_{S^{4k-1}}\bigl(p_k^*E_c\bigr)
    &=-\frac1{a_{k+1}}\,\bigl(\widehat A(TW_k)\,\bar c_2(\bar p_k^*E_c) \ch'(\bar p_k^*E_c) \bigr)
	[W_k,S^{4k-1}]\\
    &=\frac{a_k}{2\pi i}\int_\gamma
	\frac{\cosh(\sqrt a/2)\,\bigl(\cosh(\sqrt{ac})-1\bigr)}
		{2^{2k+1}\,\sqrt a\,\sinh(\sqrt a/2)^{2k+1}}\,da\\
    &=\frac{a_k}{2\pi i}\int_{\gamma^2}
	\frac{\cosh(\sqrt a/2)\,\bigl(\cosh(\sqrt{ac})-1\bigr)}
		{2^{2k+2}\,\sqrt a\,\sinh(\sqrt a/2)^{2k+1}}\,da\\
    &=\frac{a_k}{2\pi i}\int_\gamma
	\frac{\cosh\bigl(2\sqrt c\cdot\arsinh\frac z2\bigr)-1}{z^{2k+1}}\,dz
	\quad\in\quad\Q/\Z\;.
  \end{align*}
  In the last step,
  we have substituted~$z$ for~$2\sinh(\sqrt a/2)$.
  Note that the contour for~$z$ closes only after~$a$ has completed
  two cycles around the origin.

  It is well known that there exist even polynomials~$P_n$
  of degree at most~$2n$ such that
	$$\cosh(2nx)=P_n(\sinh x)=\sum_{k=0}^{n}p_{n,k}\,\sinh^{2k}x\;,$$
  and~$p_{n,n}=2^{2n-1}$.
  On the other hand,
	$$\bar t_k(y)
	:=\frac1{2\pi i}\int_\gamma
		\frac{\cosh\bigl(2y\cdot\arsinh\frac z2\bigr)-1}
			{z^{2k+1}}\,dz$$
  is an even polynomial of degree at most~$2k$ in~$y$
  such that~$\bar t_k(n)=2^{-2k}\,p_{n,k}$.
  Because~$p_{n,k}=0$ if~$n<k$,
  we conclude that~$\bar t_k$ vanishes at~$n=1-k$, \dots, $k-1$,
  and that~$\bar t_k(k)=2^{-2k}p_{k,k}=\frac12$.
  Since~$\bar t_k$ is even,
  this leads to the equations
  \begin{equation*}
    \bar t_k(y)
    =\frac12\prod_{j=0}^{k-1}\frac{(y^2-j^2)}{(k^2-j^2)}
    =\frac1{(2k)!}\,\prod_{j=0}^{k-1}(y^2-j^2)\;.
  \end{equation*}
  The proposition follows because~$t_{S^{4k-1}}\bigl(p_k^*E_c\bigr)
  =a_k\,\bar t_k\bigl(\sqrt c\bigr)$ mod~$\Z$.
\end{proof}

\bibliographystyle{alpha}

\end{document}